\numberwithin{equation}{section}
\theoremstyle{plain}
\newtheorem{thm}{Theorem}[section] % first one is for numbering, last one is: section-wise
\newtheorem{theorem}[thm]{Theorem}
\newtheorem{proposition}[thm]{Proposition}
\newtheorem{corollary}[thm]{Corollary}
\newtheorem{lemma}[thm]{Lemma}
\theoremstyle{definition}
\newtheorem{definition}[thm]{Definition}
\newtheorem{question}[thm]{Question}
\newtheorem{remark}[thm]{Remark}
\theoremstyle{remark}
\newtheorem{example}[thm]{Example}
\newtheorem*{rem}{Remark}  %for the remarks without numbers
\newcommand{\cl}[1]{\mathcal{#1}}
\newcommand{\bb}[1]{\mathbb{#1}}
\newcommand{\tens}{\! \otimes \!}
\newcommand{\cq}[1]{\mathbb{C}^{#1}\!/\!J}
\begin{document}

\title{The Weak Expectation Property and Riesz Interpolation}
\author{Ali S. Kavruk}
\date{}
\thanks{...}
\begin{abstract} 
We show that Lance's weak expectation property is connected to tight Riesz interpolations in lattice theory.
More precisely we first prove that if $\cl A \subset B(H)$ is a unital C*-subalgebra, where $B(H)$ is the bounded linear operators on
a Hilbert space $H$, then $\cl A$ has $(2,2)$ tight Riesz interpolation property in $B(H)$ (defined below). An extension of this requires an additional
assumption on $\cl A$: $\cl A$ has $(2,3)$   tight Riesz interpolation property in $B(H)$ at every matricial level if and only if
$\cl A$ has the weak expectation property.

Let $J= span\{(1,\!1,-1,-1,-1)\}$ in $\mathbb{C}^5$. We show that a unital C*-algebra $\cl A$ has WEP if and only if 
$\cl A \otimes_{min} (\bb C^5/J) = \cl A \otimes_{max} (\bb C^5/J)$ (here $\otimes_{min}$ and $\otimes_{max}$ are the
minimal and the maximal operator system tensor products, respectively, and $\bb C^5/J$ is the operator system quotient
of $\mathbb{C}^5$ by $J$).

We express the Kirchberg conjecture (KC) in terms of a four dimensional operator system problem. We prove that
KC has an affirmative answer if and only if $ \mathbb{C}^5/J$ has the double commutant expectation property
if and only if $ \mathbb{C}^5/J \otimes_{min} \mathbb{C}^5/J = \mathbb{C}^5/J \otimes_{c} \mathbb{C}^5/J$ (here $ \otimes_{c}$
represents the commuting operator system tensor product).

\end{abstract}
\maketitle

We continue our research on finite dimensional operator systems by means of recently developed
quotient, tensor and nuclearity theory \cite{kptt}, \cite{kptt2}, \cite{Kav2}. The main purpose of the present paper can be divided into three parts.
Letting
$$
J = span\{(1,1,-1,-1,-1)\} \subset \bb C^{5},
$$
the operator system quotient $\bb C^5 / J$ (which is different than the operator space quotient) can be identified
(unitally and completely order isomorphically) with an operator subsystem
of the full group C*-algebra $C^*(\bb Z_2 * \bb Z_3)$. By using this identification we first obtain a new weak expectation
property (WEP) criteria:
\begin{theorem}
 A unital C*-algebra $\cl A$ has WEP if and only if we have the unital and complete order isomorphism
$$
\cl A \otimes_{min} (\bb C^5 / J) = \cl A \otimes_{max} (\bb C^5 / J).
$$
\end{theorem}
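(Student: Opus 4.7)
The approach is to realize $\bb C^5/J$ as an explicit operator subsystem of $C^*(\bb Z_2 * \bb Z_3)$ that generates it as a C*-algebra, transfer tensor norms between the operator system and its universal C*-cover, and invoke Kirchberg's WEP criterion — which applies to $C^*(\bb Z_2 * \bb Z_3)$ because this C*-algebra has the LLP (as a free product of finite-dimensional algebras) and $\bb Z_2 * \bb Z_3 \cong PSL(2,\bb Z)$ contains $F_2$.

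\textbf{Setup.} Let $u,v$ be the canonical generators of $\bb Z_2 * \bb Z_3$ (so $u^2 = 1$ and $v^3 = 1$), set $p_\pm = (1 \pm u)/2$, and let $q_0, q_1, q_2$ be the three spectral projections of $v$. The linear map $\Psi : \bb C^5 \to C^*(\bb Z_2 * \bb Z_3)$ sending $(a_+, a_-, b_0, b_1, b_2)$ to $a_+ p_+ + a_- p_- + b_0 q_0 + b_1 q_1 + b_2 q_2$ has kernel exactly $J$, and using the operator system coproduct machinery of \cite{kptt2} the induced map realizes $\bb C^5/J$ as the operator subsystem $\cl S := \sspp\{1, u, v, v^*\}$ unitally and completely order isomorphically. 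In particular $\cl S$ generates $C^*(\bb Z_2 * \bb Z_3)$ and $C^*_u(\cl S) = C^*(\bb Z_2 * \bb Z_3)$, so by \cite{kptt} we obtain complete order embeddings
$$
\cl A \otimes_{min} \cl S \hookrightarrow \cl A \otimes_{min} C^*(\bb Z_2 * \bb Z_3), \qquad \cl A \otimes_{max} \cl S \hookrightarrow \cl A \otimes_{max} C^*(\bb Z_2 * \bb Z_3),
$$
the first by injectivity of $\otimes_{min}$ and the second by the universal property of $C^*_u(\cl S)$.

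\textbf{Forward direction.} If $\cl A$ has WEP, Kirchberg's theorem yields $\cl A \otimes_{min} C^*(\bb Z_2 * \bb Z_3) = \cl A \otimes_{max} C^*(\bb Z_2 * \bb Z_3)$; restricting via the two embeddings above gives the claimed equality on $\cl A \otimes (\bb C^5/J)$.

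\textbf{Reverse direction (main obstacle).} Conversely, assume $\cl A \otimes_{min} (\bb C^5/J) = \cl A \otimes_{max} (\bb C^5/J)$. The delicate point is propagating this equality from the $4$-dimensional operator system up to the full group C*-algebra. The key technical input is $C^*_u(\cl S) = C^*(\bb Z_2 * \bb Z_3)$: a ucp map $\phi : \cl S \to B(H)$ commuting with a unital $*$-homomorphism $\pi : \cl A \to B(H)$ extends uniquely to a $*$-homomorphism $\tilde\phi : C^*(\bb Z_2 * \bb Z_3) \to B(H)$, and since commuting is preserved under passing to the C*-algebra generated by $\phi(\cl S)$, $\tilde\phi$ again commutes with $\pi$. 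Consequently the commuting pairs defining $\cl A \otimes_{max} C^*(\bb Z_2 * \bb Z_3)$ are already enumerated by those defining $\cl A \otimes_{max} \cl S$. Combined with the injectivity of $\otimes_{min}$ and a matricial amplification handled through the quotient/tensor framework of \cite{kptt}, \cite{kptt2}, \cite{Kav2}, this upgrades the operator-system equality to the C*-level equality $\cl A \otimes_{min} C^*(\bb Z_2 * \bb Z_3) = \cl A \otimes_{max} C^*(\bb Z_2 * \bb Z_3)$, from which Kirchberg's theorem produces WEP.
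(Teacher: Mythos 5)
Your high-level skeleton for the reverse implication (identify $\bb C^5/J$ with $\cl S=span\{1,u,v,v^*\}\subset C^*(\bb Z_2*\bb Z_3)$ and shuttle the tensor identity up to Kirchberg's LLP--WEP theorem) is the paper's, but the transfer mechanism you use rests on a false identification: $C^*_u(\cl S)$ is \emph{not} $C^*(\bb Z_2*\bb Z_3)$. The universal C*-cover must admit a $*$-homomorphic extension of \emph{every} ucp map on $\cl S$, whereas, for instance, a state on $\cl S$ sending $u\mapsto 0$ is ucp but cannot extend to a $*$-homomorphism of $C^*(\bb Z_2*\bb Z_3)$, since a $*$-homomorphism must carry the self-adjoint unitary $u$ to a self-adjoint unitary. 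So $C^*(\bb Z_2*\bb Z_3)$ is merely \emph{a} C*-cover generated by $\cl S$, and your key step in the reverse direction (``the commuting pairs defining $\cl A\otimes_{max}C^*(\bb Z_2*\bb Z_3)$ are already enumerated by those defining $\cl A\otimes_{max}\cl S$'') fails as stated. The same false universal property is what you use to justify the embedding $\cl A\otimes_{max}\cl S\hookrightarrow \cl A\otimes_{max}C^*(\bb Z_2*\bb Z_3)$, which your forward direction also needs; but $max$ is not injective, and embeddings of the form $\cl B\otimes_{max}\cl S\subset\cl B\otimes_{max}\cl A$ for C*-algebras $\cl A\supset\cl S$ are precisely the tensor characterization of DCEP of $\cl S$ --- which for $\cq{5}$ is the Kirchberg conjecture (Theorem \ref{thm 4dimKC}) --- so this step cannot be taken for granted. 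Finally, even if the commuting pairs were correctly enumerated, that would only identify the min and max cones on the subspace $\cl A\otimes\cl S$; upgrading to equality on all of $\cl A\otimes C^*(\bb Z_2*\bb Z_3)$ is exactly the content of the ``contains enough unitaries'' Proposition \ref{pro enough}, which you dispatch with ``a matricial amplification handled through the framework.'' That is the heart of the matter, not a routine amplification.

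For comparison, the paper's forward direction needs no embedding into the group C*-algebra at all: WEP is $(el,max)$-nuclearity and $\cq{5}$ has the lifting property, i.e.\ $(min,er)$-nuclearity (Theorem \ref{thm QouhasLP}), so $\cl A\otimes_{min}\cq{5}=\cl A\otimes_{el}\cq{5}=\cl A\otimes_{max}\cq{5}$ directly. For the reverse direction it invokes Proposition \ref{pro enough}: since $\cl S$ contains enough unitaries of $C^*(\bb Z_2*\bb Z_3)$, the equality $\cl A\otimes_{min}\cq{5}=\cl A\otimes_{c=max}\cq{5}$ forces $\cl A\otimes_{min}C^*(\bb Z_2*\bb Z_3)=\cl A\otimes_{max}C^*(\bb Z_2*\bb Z_3)$, and then Theorem \ref{thm F_2Z_2} gives WEP. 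The proof of that proposition is an Arveson-extension plus multiplicative-domain argument applied to a ucp map that is \emph{already known} to send the generating unitaries $u,v,v^*$ to unitaries, and is therefore automatically a $*$-homomorphism on the algebra they generate. That is the correct replacement for your extension claim: it is true for ucp maps on the cover fixing the unitaries of $\cl S$, not for arbitrary ucp maps on $\cl S$.
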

This WEP criteria also allows us to re-express Kirchberg's conjecture \cite{Ki2} in terms of a problem about this four dimensional operator system.
Recall that Kirchberg's conjecture asserts that every separable C*-algebra that has the local lifting property (LLP) has WEP. In \cite{kptt2}
this problem is approached in the operator system setting and the following reformulation is given:
Every finite dimensional operator system that has the lifting property (LP) has the double commutant expectation
property (DCEP). (DCEP is one extension of WEP from unital C*-algebras to general operator systems and discussed briefly in Subsection \ref{sec DCEP} below).
In \cite{Kav2} it was also proven that this is equivalent to the universal operator system generated by two contractions, $\cl S_2$, (which has LP) has DCEP.
One of our main results in Section 5 is the following four dimensional reduction.

\begin{theorem}
The following are equivalent:
\begin{enumerate}
 \item Kirchberg's conjecture has an affirmative answer.
 \item $\bb C^5 / J$ has DCEP.
 \item We have the complete order isomorphism 
$$
(\bb C^5 / J) \otimes_{min} (\bb C^5 / J) = (\bb C^5 / J) \otimes_{c} (\bb C^5 / J). 
$$
\end{enumerate}
\end{theorem}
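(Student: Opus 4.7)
The plan is to establish $(1) \Leftrightarrow (2)$ and $(2) \Leftrightarrow (3)$ separately, with the identification $\bb C^5/J \hookrightarrow C^*(\bb Z_2 * \bb Z_3)$ from the introduction as the essential bridge: under this embedding the five generators of $\bb C^5/J$ correspond to the spectral projections of the canonical order-$2$ and order-$3$ unitaries, so $\bb C^5/J$ generates $C^*(\bb Z_2 * \bb Z_3)$ and the latter coincides with $C^*_u(\bb C^5/J)$. A preliminary invoked several times is that $\bb C^5/J$ has the lifting property, which I would verify by lifting directly from the nuclear system $\bb C^5$ using the structure of the one-dimensional subspace $J$; this is consistent with $C^*(\bb Z_2 * \bb Z_3)$ having LLP (via the finite-index inclusion $\bb F_2 \leq \bb Z_2 * \bb Z_3 \cong PSL_2(\bb Z)$ and Kirchberg's theorem for $C^*(\bb F_2)$).

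The easy directions are $(1) \Rightarrow (2)$ and $(2) \Rightarrow (3)$. For the first, apply the \cite{kptt2} reformulation of Kirchberg's conjecture --- every finite-dimensional LP operator system has DCEP --- directly to $\bb C^5/J$. For the second, use the standard tensor characterization of DCEP: an operator system $\cl S$ has DCEP iff $\cl S \otimes_{min} \cl T = \cl S \otimes_c \cl T$ for every $\cl T$ with OSLLP; apply with $\cl T = \bb C^5/J$.

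The main technical work is the converse implication $(3) \Rightarrow (1)$, which I would route through showing that $(3)$ implies $C^*(\bb Z_2 * \bb Z_3)$ has WEP. Concretely, the operator-system-level equality $(\bb C^5/J) \otimes_{min} (\bb C^5/J) = (\bb C^5/J) \otimes_c (\bb C^5/J)$ should propagate, via the functoriality $C^*_u(\cl R \otimes_c \cl S) = C^*_u(\cl R) \otimes_{max} C^*_u(\cl S)$ combined with the generating property of $\bb C^5/J$, to the C*-tensor equality $C^*(\bb Z_2 * \bb Z_3) \otimes_{min} C^*(\bb Z_2 * \bb Z_3) = C^*(\bb Z_2 * \bb Z_3) \otimes_{max} C^*(\bb Z_2 * \bb Z_3)$. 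Together with LLP of $C^*(\bb Z_2 * \bb Z_3)$ this yields its WEP, which descends via the conditional expectation $C^*(\bb Z_2 * \bb Z_3) \to C^*(\bb F_2)$ coming from the finite-index inclusion to WEP of $C^*(\bb F_2)$ --- Kirchberg's original form of KC. The missing implication $(3) \Rightarrow (2)$ then follows from the already-established $(1) \Rightarrow (2)$, closing the cycle; alternatively, DCEP of $\bb C^5/J$ can be read off by applying Theorem 1 to $\cl A = C^*(\bb Z_2 * \bb Z_3)$ once WEP of the latter is known.

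The hard part is executing the propagation step: showing carefully that the operator-system-level equality $(3)$, a statement about a sixteen-dimensional operator system, forces the corresponding equality on the much larger $C^*(\bb Z_2 * \bb Z_3) \otimes C^*(\bb Z_2 * \bb Z_3)$. This hinges on $\bb C^5/J$ being not merely contained in $C^*(\bb Z_2 * \bb Z_3)$ but generating it in a way compatible with the minimal and commuting tensor constructions, so that $C^*_u$-functoriality transfers the equality from the operator system to the full C*-tensor product --- a delicate point that the LP of $\bb C^5/J$ and the explicit projection description of its generators should make tractable.
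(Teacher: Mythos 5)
Your cycle of implications matches the paper's: $(1)\Rightarrow(2)$ via the \cite{kptt2} reformulation of KC applied to the LP system $\bb C^5/J$, $(2)\Rightarrow(3)$ by combining (min,er)-nuclearity of $\bb C^5/J$ (lifting property) with (el,c)-nuclearity (DCEP) and the el/er asymmetry, and $(3)\Rightarrow(1)$ by transferring the operator-system equality up to $C^*(\bb Z_2*\bb Z_3)\otimes_{\min}C^*(\bb Z_2*\bb Z_3)=C^*(\bb Z_2*\bb Z_3)\otimes_{\max}C^*(\bb Z_2*\bb Z_3)$, then using LLP of $C^*(\bb Z_2*\bb Z_3)$ and descent to a free group algebra. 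The two easy implications are correct as you state them.

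The gap is in the propagation step of $(3)\Rightarrow(1)$, which you correctly identify as the hard part but do not execute, and the mechanism you propose would not work. You suggest routing the transfer through the functoriality $C^*_u(\cl R\otimes_c\cl S)=C^*_u(\cl R)\otimes_{\max}C^*_u(\cl S)$ together with the fact that $\bb C^5/J$ generates $C^*(\bb Z_2*\bb Z_3)$; but $C^*_u(\bb C^5/J)$ is \emph{not} $C^*(\bb Z_2*\bb Z_3)$. The universal C*-algebra of an operator system is built from all ucp maps, not just the multiplicative ones: already $C^*_u(\bb C^2)$ is the universal C*-algebra generated by a positive contraction (roughly $C[0,1]$), not $\bb C^2$, and similarly $C^*_u(\bb C^5/J)$ is a much larger free-product-type algebra. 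So the embedding $(\bb C^5/J)\otimes_c(\bb C^5/J)\subset C^*_u(\bb C^5/J)\otimes_{\max}C^*_u(\bb C^5/J)$ does not land you in $C^*(\bb Z_2*\bb Z_3)\otimes_{\max}C^*(\bb Z_2*\bb Z_3)$, and $C^*_u$-functoriality alone gives no information about the latter. The tool the paper actually uses is Proposition \ref{pro enough} (from \cite{kptt2}, after Pisier): if $\cl S\subset\cl A$ and $\cl T\subset\cl B$ contain enough unitaries, then $\cl S\otimes_{\min}\cl T=\cl S\otimes_c\cl T$ forces $\cl A\otimes_{\min}\cl B=\cl A\otimes_{\max}\cl B$; its proof is a multiplicative-domain argument on a ucp extension of the identity, not a universal-algebra argument, and it is precisely the nontrivial content your proposal leaves unproved. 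Once that proposition is invoked, your remaining steps (LLP of $C^*(\bb Z_2*\bb Z_3)$, hence WEP from $\min=\max$ against itself, descent to $C^*(\bb F_2)$ via the ucp inverse of the subgroup inclusion) do close the argument, matching the paper's preliminary proposition which uses $\bb F_\infty\le\bb Z_2*\bb Z_3$ in place of your $\bb F_2$.
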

\noindent Here $\otimes_c$ denotes the (maximal) commuting tensor product and we briefly summarized its main properties in Section 2. 

In the final section we examine the role of WEP in tight Riesz separation properties and we prove, for larger arguments,
that these two concepts are identical.
Let $\cl A$ be a unital C*-subalgebra of $B(H)$. We say
that $\cl A$ has the \textbf{(k,m) tight Riesz interpolation property in $B(H)$}, TR(k,m)-property in short, if for every 
self-adjoint elements $x_1,...,x_k$ and $y_1,...,y_m$ in $\cl A$
whenever there is an element $b$ in $B(H)$ with
$$
x_1,...,x_k < b < y_1,...,y_m
$$
then there is an element $a$ in $\cl A$ such that
$$
x_1,...,x_k < a < y_1,...,y_m.
$$
Here $x<y$ stands for $\delta I \leq y-x$ for some positive $\delta$ where $I$ denotes the unit of $B(H)$. Likewise,
we say that $\cl A$ has the \textbf{complete TR(k,m)-property in} $B(H)$ if $M_n(\cl A)$ has TR(k,m)-property in $M_n(B(H))$
for every $n$. We first prove the following:
\begin{theorem}
$\cl A \subset B(H)$ has the complete TR(2,2)-property in $B(H)$. 
\end{theorem}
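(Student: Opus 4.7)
The plan is first to reduce the complete TR(2,2) statement to the non-matricial TR(2,2) statement: since $M_n(\cl A)$ is a unital $C^*$-subalgebra of $M_n(B(H)) = B(H^n)$ for every $n$, applying the non-matricial TR(2,2) to the pair $(M_n(\cl A), M_n(B(H)))$ yields TR(2,2) at level $n$, so the complete version reduces to the non-matricial version holding for every unital $C^*$-subalgebra of every $B(K)$.

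For the non-matricial case, fix self-adjoint $x_1, x_2, y_1, y_2 \in \cl A$ and $b \in B(H)$ with $x_1, x_2 < b < y_1, y_2$, and let $\delta > 0$ satisfy $b - x_i, y_j - b \geq \delta I$ for $i,j \in \{1,2\}$. The goal is to build $a \in \cl A$ satisfying the same four strict inequalities. A promising strategy is to use continuous functional calculus inside $\cl A$ to produce candidate upper and lower envelopes: $s := \frac{1}{2}(x_1 + x_2 + |x_1 - x_2|) \in \cl A$, a ``noncommutative join'' satisfying $s \geq x_1, x_2$, and symmetrically $t := \frac{1}{2}(y_1 + y_2 - |y_1 - y_2|) \in \cl A$ with $t \leq y_1, y_2$. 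If one can show that the existence of $b$ forces a strict gap $t - s \geq \delta' I$ for some $\delta' > 0$, then $a := \frac{1}{2}(s + t)$ does the job, with gap $\delta'/2$ from each bound.

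The main obstacle is precisely to establish $t \geq s + \delta' I$. Unlike in the commutative case, the element $s$ need not be dominated by every common upper bound of $\{x_1, x_2\}$ in $B(H)$, so $b \geq x_1, x_2$ does not force $b \geq s$, and the direct chain $s \leq b \leq t$ is not automatically available. To circumvent this, one can normalize the problem: since $y_1 - x_1$ is strictly positive in $\cl A$ hence invertible, conjugation by $(y_1 - x_1)^{-1/2}$ reduces the task to finding $T \in \cl A$ satisfying $0, \alpha < T < I, \beta$, where
\[
\alpha = (y_1 - x_1)^{-1/2}(x_2 - x_1)(y_1 - x_1)^{-1/2}, \qquad \beta = (y_1 - x_1)^{-1/2}(y_2 - x_1)(y_1 - x_1)^{-1/2},
\]
after which $a := x_1 + (y_1 - x_1)^{1/2} T (y_1 - x_1)^{1/2} \in \cl A$ is the desired interpolant. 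The normalized witness $T^b := (y_1 - x_1)^{-1/2}(b - x_1)(y_1 - x_1)^{-1/2} \in B(H)$ satisfies the corresponding four strict inequalities, and the crux is to exhibit such a $T$ in $\cl A$ using only the functional calculus of $\cl A$ and the gap produced by $b$, without invoking any conditional expectation $B(H) \to \cl A$ (which in general does not exist when $\cl A$ fails to be injective). This is where the bulk of the argument---and the specifically ``2+2'' nature of the problem---must be exploited.
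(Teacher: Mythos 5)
Your proposal is not yet a proof: the reduction to the scalar level and the normalization to the form $0,\alpha < T < I,\beta$ are both fine, but the step that actually carries the content --- producing the interpolant $T$ in $\cl A$ from the witness in $B(H)$ --- is explicitly left open (``this is where the bulk of the argument \dots must be exploited''). Moreover, the envelope candidates $s=\tfrac12(x_1+x_2+|x_1-x_2|)$ and $t=\tfrac12(y_1+y_2-|y_1-y_2|)$ are a dead end rather than a gap one could hope to close: by Kadison's anti-lattice theorem $B(H)_{sa}$ has no least upper bounds, and $s$ is just one upper bound among many, so one can choose an upper bound $b$ of $x_1,x_2$ with $b\not\geq s$ and then set $y_1=y_2=b+\epsilon I$; for small $\epsilon$ this gives data satisfying the hypothesis (with $b$ itself as interpolant) while $t=b+\epsilon I\not\geq s$, so the midpoint $\tfrac12(s+t)$ fails. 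Note also that your construction of $s$ and $t$ never uses $b$ at all, whereas Paulsen's $M_2$ example in Section 7 of the paper shows the conclusion is false without the hypothesis that $b$ exists; any correct argument must consume $b$ in an essential way.

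For comparison, the paper's proof is a two-line tensor argument resting on earlier machinery: Theorem \ref{thm TR-tens} translates the complete TR$(2,2)$-property into the complete order embedding $\cl A \otimes_{max} (\bb C^{4}/J_{2,2}) \subset B(H) \otimes_{max} (\bb C^{4}/J_{2,2})$, and this embedding holds because $\bb C^{4}/J_{2,2}$ is C*-nuclear (so $\otimes_{max}$ may be replaced by the injective $\otimes_{min}$). The analytic heart, hidden in the C*-nuclearity proof, is the dilation of a self-adjoint contraction $a$ to the self-adjoint unitary $U_a\in M_2(\cl A)$, which shows every ucp map on $\mathrm{span}\{e,a,b\}\subset C^*(\bb Z_2 * \bb Z_2)$ extends to the (nuclear) algebra $C^*(\bb Z_2 * \bb Z_2)$. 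If you want a direct, element-level proof along the lines you sketch, you would essentially have to rediscover that dilation trick applied to your normalized data; as written, the crucial implication is missing.
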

While (2,2)-interpolation is automatically satisfied, higher interpolations require an additional hypothesis on $\cl A$. The following
is our main result in Section 7:
\begin{theorem}
Let $\cl A \subset B(H)$ is a unital C*-algebra. Then the following are equivalent:
\begin{enumerate}
 \item $\cl A$ has the weak expectation property;
 \item $\cl A$ has the complete TR(2,3)-property in $B(H)$;
 \item $\cl A$ has the complete TR(k,m)-property in $B(H)$ for some $k \geq 2$, $m \geq 3$;
 \item $\cl A$ has the complete TR(k,m)-property in $B(H)$ for all positive integers $k$ and $m$.
\end{enumerate}
\end{theorem}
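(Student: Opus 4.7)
The implications $(4) \Rightarrow (3) \Rightarrow (2)$ are immediate from the definitions, so the substance lies in proving $(2) \Rightarrow (1)$ and $(1) \Rightarrow (4)$.

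For $(2) \Rightarrow (1)$ the plan is to invoke the WEP criterion stated as the first theorem of the introduction, according to which $\cl A$ has WEP if and only if $\cl A \otimes_{min} (\bb C^5/J) = \cl A \otimes_{max} (\bb C^5/J)$ unitally and completely order isomorphically. The coincidence $5=2+3$ and the choice of generator $(1,1,-1,-1,-1)$ for $J$ are dictated precisely by the geometry of two lower bounds separated from three upper bounds by a common interpolant. I would unwind the operator system quotient cone on $\bb C^5/J$ at the $n$-th matricial level and check that the self-adjoint element $-X_1\otimes \dot e_1 - X_2\otimes \dot e_2 + Y_1\otimes \dot e_3 + Y_2\otimes \dot e_4 + Y_3\otimes \dot e_5$ is positive for $\otimes_{max}$ iff there exists self-adjoint $A \in M_n(\cl A)$ with $X_i \leq A \leq Y_j$, and is positive for $\otimes_{min}$ iff the same interpolation is realized by some $B \in M_n(B(H))$ (using the injectivity of $\otimes_{min}$ under $\cl A \hookrightarrow B(H)$). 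The complete TR(2,3) hypothesis then bridges min-positivity to max-positivity, with the $\delta$-slack absorbing the passage from non-strict to strict inequalities, and yields $\otimes_{min} = \otimes_{max}$.

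For $(1) \Rightarrow (4)$ the plan is to upgrade the $(2,3)$-encoding to arbitrary $(k,m)$. Set $J_{k,m} = \mathrm{span}\{(1,\ldots,1,-1,\ldots,-1)\} \subset \bb C^{k+m}$, with $k$ entries $+1$ and $m$ entries $-1$; the analogous element of $\cl A \otimes (\bb C^{k+m}/J_{k,m})$ encodes the complete TR(k,m) property in the same fashion. I would transfer $\otimes_{min} = \otimes_{max}$ from $\bb C^5/J$ to each $\bb C^{k+m}/J_{k,m}$ inductively, absorbing one extra bound at a time: a single TR(2,3) step internal to $\cl A$ collapses a $(k,m)$ datum to a $(k-1,m)$ or $(k,m-1)$ datum whenever $k \geq 2, m \geq 3$, and the cascade is controlled by the WEP-equality on $\bb C^5/J$ throughout. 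A more direct but delicate alternative applies Lance's weak expectation $\phi : B(H) \to \cl A''$ to a given interpolator in $M_n(B(H))$ and then descends to $M_n(\cl A)$ by Kaplansky density, using the $\delta$-slack.

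The main obstacle I anticipate is the descent step in that alternative route: Kaplansky density supplies only $\sigma$-strong approximation, whereas preservation of the strict matrix inequalities requires norm-level control on the finite-dimensional slice spanned by $\{X_i\}$, $\{Y_j\}$, and $\phi^{(n)}(B)$. The $\delta$-slack is the sole leverage for converting SOT approximation into a norm estimate strong enough to keep the approximant inside the open interpolation region, and making this precise is where the real care is needed. The tensor-product route avoids the difficulty entirely by remaining inside the operator system category, where the $\bb C^5/J$-criterion does the heavy lifting, so I expect the paper's main argument to proceed along that line.
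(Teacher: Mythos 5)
Your overall framework and your treatment of $(2)\Rightarrow(1)$ match the paper. The interpolation data are encoded exactly as you describe (Proposition \ref{pro +-poscrit.}); the complete TR$(k,m)$-property in $\cl B$ is shown to be equivalent to the complete order embedding $\cl A \otimes_{max} (\bb C^{k+m}/J_{k,m}) \subset \cl B \otimes_{max}(\bb C^{k+m}/J_{k,m})$ (Theorem \ref{thm TR-tens}); and for $(k,m)=(2,3)$, $\cl B = B(H)$, this embedding is equivalent to WEP because $\bb C^5/J$ has the lifting property, so $B(H)\otimes_{min} = B(H)\otimes_{max}$ and the embedding forces $\cl A\otimes_{min}(\bb C^5/J)=\cl A\otimes_{max}(\bb C^5/J)$ (Corollary \ref{cor WEPCriB(H)} together with Theorem \ref{thm WEPcri}).

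The genuine gap is in $(1)\Rightarrow(4)$. Your primary mechanism --- absorbing one bound at a time so that a TR$(2,3)$ step collapses a $(k,m)$ datum to a $(k-1,m)$ or $(k,m-1)$ datum --- does not survive the passage to the \emph{relative} setting. If you replace, say, $y_{m-1},y_m$ by an interpolant $w\in\cl A$ with $x_1,x_2<w<y_{m-1},y_m$, the reduced problem $x_1,x_2<\,?\,<y_1,\dots,y_{m-2},w$ needs its own witness in $B(H)$, and the original witness $b$ need not lie below $w$; manufacturing a common witness below both $b$ and $w$ is itself a relative TR$(2,2)$ problem whose hypothesis (existence of some $B(H)$-interpolant) is precisely what is missing --- the $2\times 2$ matrix example in the paper shows that $x_1,x_2< w,b$ alone guarantees nothing. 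Your alternative route fails where you suspected: Kaplansky density gives only strong-operator approximants of $\phi^{(n)}(B)\in M_n(\cl A'')$, and inequalities of the form $a_\lambda \le Y_j - \delta I$ are not inherited from a strong limit, with no norm control on the approximating net to exploit the $\delta$-slack. The paper's actual argument needs no induction and no density theorem: WEP is (el,max)-nuclearity, each $\bb C^{k+m}/J_{k,m}$ has the lifting property, i.e.\ (min,er)-nuclearity (being a quotient of $\bb C^{k+m}$ by a null subspace, Theorem \ref{thm QouhasLP}), hence $\cl A\otimes_{min} = \cl A\otimes_{el} = \cl A\otimes_{max}$ and $B(H)\otimes_{min}=B(H)\otimes_{max}$ against $\bb C^{k+m}/J_{k,m}$, and the required embedding then follows from the injectivity of the minimal tensor product. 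You should replace your inductive cascade by this nuclearity bookkeeping.
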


This characterization of WEP is independent of the particular faithful representation
of the C*-algebra on a Hilbert space. (In fact a careful reading of our proof indicates that we can even consider
an (operator systematic) representation.)  In this respect it differs
from Lance's original definition of WEP \cite{La}, which requires that every faithful representation
of the given C*-algebra has a conditional expectation into its double commutant.
This requirement in the original definition is essential since every C*-algebra
has at least one representation which has a a conditional expectation into its double
commutant.

%Recall that Lance's original definition of WEP for a C*-algebra $\cl A$ requires every (unital C*-algebraic) representation $\cl A \hookrightarrow B(H)$
%to have a conditional expectation on the double commutant $\cl A ''$. Here it is essential to consider every representation since every C*-algebra admits at least one
%representation which has a conditional expectation on the double commutant ??. In contrast to this, the above characterization is independent of a particular
%representation.

$ $

Our proofs makes use of recently developed tensor, quotient and nuclearity theory of operator systems. In this regard
we start with a brief overview on the operator systems. The preliminary section includes basic facts on duality, quotients,
C*-covers etc. We devote Section 2 for the tensor products in the category of operator systems. Here, after the axiomatic
definition of tensor products we briefly summarized main properties of the minimal (min), maximal (max), commuting (c) and two asymmetric
tensor products, enveloping left (el) and enveloping right (er). The set of all tensor products admits a natural (partial) order and
the primary tensor products we consider have the following pattern:
$$
\otimes_{min} \;\; \leq \;\;  \otimes_{el} \;\;, \;\; \otimes_{er} \;\; \leq \;\; \otimes_c \;\; \leq \;\; \otimes_{max}.
$$ 
Section 3 includes several nuclearity related properties including the (operator system) local lifting property (osLLP), double
commutant expectation property (DCEP), weak expectation property and exactness. These operator system notions, along with
tensor characterizations, studied in \cite{kptt2}.
The term ``nuclearity related'' perhaps best seen in the operator system setting: Given tensor products $\alpha \leq \beta$
we call an operator system $(\alpha, \beta)$-nuclear if
$$
\cl S \otimes_{\alpha} \cl T = \cl S \otimes_{\alpha} \cl T \mbox{ for every operator system } \cl T.
$$
Our main purpose in Section 3 is to exhibit the following ``nuclearity diagram'':
$$\hspace{4cm}
\xymatrix{
\;\;\;\;\;\;\;\; min \;\;\; \leq \ar@{-}@/^1pc/[r]^{\;\;\;\;\;exactness}  
\ar@{-}@/^4pc/[rrr]^{\mbox{C*-nuclearity}} 
\ar@{-}@/_2pc/[rr]_{\mbox{osLLP}}  
  &
el    \ar@{-}@/_2pc/[rr]_{\mbox{DCEP}} 
   & er & \leq \;\;\;  c
\hspace{1cm}&  & &
}
\hspace{4cm}
\xymatrix{
\;\;\;\;\;\;\;\; min \;\;\; \leq \ar@{-}@/^1pc/[r]^{\;\;\;\;\;exactness}  
\ar@{-}@/^4pc/[rrr]^{\mbox{C*-nuclearity}} 
\ar@{-}@/_2pc/[rr]_{\mbox{osLLP}}  
  &
el    \ar@{-}@/_2pc/[rr]_{\mbox{DCEP}} 
   & er & \leq \;\;\;  c
\hspace{1cm}&  & &
}
$$
For example, an operator system $\cl S$ is exact if and only if it is (min,el)-nuclear. The remaining of the Section 3 includes several
interesting examples on the stability of these properties under (operator system) quotients, duality etc. For example,
in contrast to C*-algebra ideal quotients, exactness is not preserved under operator system quotients. Though, in the finite dimensional case, the
lifting property is preserved under quotients by null subspaces, etc.

In Section 4 we recall basic facts on the coproducts of operator systems introduced by Kerr and Li \cite{KL}. This should
be considered as operator system variant of the unital free products of C*-algebras. The main purpose of this
section is to obtain the following identification: Consider $\bb Z_2 * \bb Z_3 = \langle a,b: a^2 = b^3 = e \rangle$. Let
$\lambda$ be the universal representation  of  $\bb Z_2 * \bb Z_3$ in its full group C*-algebra $C^*(\bb Z_2 * \bb Z_3)$.
Let 
$$
\cl S = span\{\lambda(e), \lambda(a), \lambda(b), \lambda(b)^*\} \subset C^*(\bb Z_2 * \bb Z_3).
$$
Then the last theorem of the section indicates that we have the unital complete order isomorphism
$$
\bb C^5 / J \cong \cl S.
$$

Section 5 contains two of our main application that we pointed out at the beginning of introduction, namely the four dimensional
version of WEP criteria and four dimensional operator system variant of Kirchberg's conjecture.

$ $

Fixing the basis $\{\dot{e}_1,\dot{e}_2,\dot{e}_3,\dot{e}_4\}$ of $\bb C^5/J$ every element in the algebraic
tensor $\cl S \otimes (\bb C^5 / J)$ can be uniquely written as
\begin{eqnarray}\label{fgh}
s_1 \tens \dot{e}_1 + s_2 \tens \dot{e}_2 + s_3 \tens \dot{e}_3 + s_4 \tens \dot{e}_4.
\end{eqnarray}
Section 6 is devoted to understand the positivity criteria in $- \otimes_{min}  (\bb C^5 / J)$, $- \otimes_{c}  (\bb C^5 / J)$, and $- \otimes_{max}  (\bb C^5 / J)$.
So simply put, when the above mentioned element is positive when the algebraic tensor  $\cl S \otimes (\bb C^5 / J)$
is equipped with the minimal, commuting or the maximal tensor product. As a rehearsal we would like give
the following part of Proposition \ref{prop pos-criteria}.  Let $\cl S \subset B(H)$ be an operator subsystem and let $u$ denote the expression in (\ref{fgh}).
Then
\begin{enumerate}
 \item $u > 0$ in $\cl S \otimes_{min} (\bb C^5 / J)$ if and only if there is an element $b \in B(H)^+$ such that 
$$
s_1,s_2 > b \mbox{ and } s_3, s_4 \geq -b.
$$
 \item $u > 0$ in $\cl S \otimes_{max} (\bb C^5 / J)$ if and only if there is an element $s \in \cl S^+$ such that 
$$
s_1,s_2 > s \mbox{ and } s_3, s_4 \geq -s.
$$
\end{enumerate}

Several positivity criteria that we obtain in Section 6 together with the WEP characterization given in Section 5 form the basic
part of Section 7 and allows us to prove the equivalence of WEP and (k,m) tight Riesz interpolations when $k \geq 2$ and $m \geq 3$.

\section{Preliminaries}

In this section we establish the terminology and state the definitions and
basic results that shall be used throughout the paper. It is assumed throughout
that all C*-algebras are unital and all ideals are closed and two sided (and consequently $*$-closed).
The C*-algebra of $n\times n$ matrices is denoted by $M_n$. 
By an operator system $\cl S$ we mean a unital, $*$-closed subspace of $B(H)$
together with the induced matricial order structure.
We refer the reader to \cite{Pa} for an introductory exposition of these objects along with
their abstract characterization due to Choi and Effros. If $\varphi : \cl S \rightarrow \cl T$
is a linear map, where $\cl T$ and $\cl S$ are operator systems, the $n^{th}$-amplification
$\varphi^{(n)}$ is defined by $\varphi \otimes id_n : \cl S \otimes M_n \rightarrow \cl T \otimes M_n$.
$\varphi$ is called $n$-positive if $\varphi^{(n)}$ is positive and completely positive (cp) if $\varphi^{(n)}$
is positive for all $n$. If in addition $\varphi(e) = e$, i.e. if it maps the unit to unit, then we say that
$\varphi$ is unital and completely positive (ucp).

$ $

A pair $(i,\cl A)$ is called a C*-cover of an operator system $\cl S$ if $i: \cl S \rightarrow \cl A$
is a unital complete order embedding such that $i(\cl S)$ (topologically) generates $\cl A$ as a C*-algebra.
We often identify $\cl S$ with $i(\cl S)$ and consider it as an operator subsystem of $\cl A$.
Every operator system $\cl S$ 
admits two special C*-covers, the universal and the enveloping
C*-algebras, denoted by $C_u^*(\cl S)$ and $C^*_e(\cl S)$, respectively. The universal C*-algebra has the
following ``maximality'' property: For every ucp map $\varphi: \cl S \rightarrow \cl B$, where $\cl B$ is a C*-algebra,
there is a uniquely determined unital $*$-homomorphism $\pi: C^*_u(\cl S) \rightarrow \cl B$ which extends $\varphi$.
The enveloping C*-algebra is the ``minimal'' C*-cover in the sense that for any C*-cover $(i,\cl A)$
of $\cl S$ there is a uniquely determined unital $*$-homomorphism $\pi : \cl A \rightarrow C^*_e(\cl S)$
such that $\pi(i(s)) = s$ for every $s$ in $\cl S$. The enveloping C*-algebra of $\cl S$ can be identified with the C*-algebra
generated by $\cl S$ in its injective envelope $I(\cl S)$. The reader may refer to \cite[Chp. 15 ]{Pa} for an excellent
survey on the injectivity of operator systems. (However, for convenience, we remark that
every injective operator system has the structure of a C*-algebra.)

\subsection{Duality} The duality in the operator system, especially in the finite dimensional case, 
has had a substantial role in the study of tensor products. Starting with an operator system $\cl S$
the Banach dual $\cl S$ has a natural matricial order structure.
For $f$ in $\cl S^d$, the involution is defined by $f^*(s) = \overline{f(s^*)}$. The matricial order structure is
described as:
$$
(f_{ij}) \in M_n(\cl S^d) \mbox{ is positive if the map } \cl S \ni
s \mapsto (f_{ij}(s)) \in M_n \mbox{ is cp}. 
$$
Throughout the paper $\cl S^d$ will always represent this
matrix ordered vector space. The bidual Banach space $\cl S^{dd}$ has also
a natural matricial order structure arising from the fact that it is the dual of $\cl S^d$. The following
is perhaps well known, see \cite{kptt}, e.g.:
\begin{theorem}
$\cl S^{dd}$ is an operator system with unit $\hat{e}$, the canonical image of $e$ in $\cl S^{dd}$. 
Moreover, the canonical embedding of $\cl S$ into $\cl S^{dd}$ is a complete order embedding.
\end{theorem}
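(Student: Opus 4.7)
The plan is to apply the Choi--Effros abstract characterization of operator systems: I must exhibit $\cl S^{dd}$ as a matrix ordered $*$-vector space for which $\hat{e}$ is an Archimedean matrix order unit at every level. The involution is inherited from $\cl S^d$ by $F^*(f) = \overline{F(f^*)}$. For the matrix ordering, I use the canonical isometric identification $M_n(\cl S^{dd}) \cong (M_n(\cl S^d))^d$ and declare a self-adjoint $F = (F_{ij}) \in M_n(\cl S^{dd})$ positive if and only if $\langle F, \varphi \rangle \geq 0$ for every $\varphi \in M_n(\cl S^d)^+$ (with $\varphi$ positive in the sense of the excerpt, i.e.\ the associated map $\psi_\varphi : \cl S \to M_n$, $\psi_\varphi(s) = (\varphi_{ij}(s))$, is cp). Properness, closure under $*$, and compatibility with conjugation by scalar matrices are routine from this definition.

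The two nontrivial Choi--Effros axioms are that $\hat{e} \otimes I_n$ is a matrix order unit and that it is Archimedean. For the order-unit axiom, given a self-adjoint $F \in M_n(\cl S^{dd})$, I use that every positive $\varphi \in M_n(\cl S^d)^+$ corresponds to a cp map $\psi_\varphi : \cl S \to M_n$, whose norm is attained at the unit, $\|\varphi\| = \|\psi_\varphi(e)\|$. Hence the size of $\varphi$ is controlled by its pairing with $\hat{e} \otimes I_n$, and Banach-space boundedness of $F$ as a functional on $M_n(\cl S^d)$ yields $r\,(\hat{e} \otimes I_n) \pm F \geq 0$ once $r$ is taken sufficiently large. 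For the Archimedean axiom, if $r\,(\hat{e} \otimes I_n) + F \geq 0$ for every $r > 0$, then pairing against any $\varphi \in M_n(\cl S^d)^+$ gives $\langle F, \varphi \rangle \geq -r \,\varphi(e \otimes I_n)$; letting $r \to 0^+$ forces $\langle F, \varphi \rangle \geq 0$, so $F \in M_n(\cl S^{dd})^+$.

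For the complete order embedding, let $\iota : \cl S \to \cl S^{dd}$ denote the canonical evaluation map. Complete positivity of $\iota$ is immediate from the definitions: if $(s_{ij}) \in M_n(\cl S)^+$ and $\varphi \in M_n(\cl S^d)^+$ corresponds to a cp map $\psi_\varphi$, then $\langle \iota^{(n)}((s_{ij})), \varphi \rangle$ unwinds to the trace-pairing of $\psi_\varphi^{(n)}((s_{ij}))$ with the identity of $M_n \otimes M_n$, which is non-negative. The reverse implication---that $\iota^{(n)}((s_{ij})) \in M_n(\cl S^{dd})^+$ forces $(s_{ij}) \in M_n(\cl S)^+$---is where I would invoke Hahn--Banach separation on the real subspace of self-adjoint elements of $M_n(\cl S)$: if $(s_{ij})$ were not positive, a self-adjoint linear functional $\psi$ on $M_n(\cl S)$ could be chosen non-negative on $M_n(\cl S)^+$ and strictly negative on $(s_{ij})$. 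Via Choi's identification, such a $\psi$ corresponds to a positive element of $M_n(\cl S^d)$, contradicting $\iota^{(n)}((s_{ij})) \geq 0$ in $M_n(\cl S^{dd})$.

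The main obstacle will be the verification of the order-unit and Archimedean axioms at all matrix levels simultaneously; these must use the Archimedean order unit structure of $\cl S$ in precisely the right place to bound positive functionals on $\cl S$ by their values at $e$. The Hahn--Banach step for the reverse direction of the embedding is classical, but care is needed to ensure the separating functional is self-adjoint and corresponds via Choi's identification to a genuine element of $M_n(\cl S^d)^+$, rather than just a positive real-linear functional on $M_n(\cl S)_{sa}$.
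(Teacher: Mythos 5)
Your outline is essentially correct, and there is nothing in the paper to compare it against: the paper states this theorem without proof, deferring to \cite{kptt}, and your argument is the standard one found there (dual matrix cones are weak-$*$ closed, hence Archimedean; boundedness of $F$ plus the identity $\|\varphi\| = \varphi(e\otimes I_n)$ for positive functionals on the operator system $M_n(\cl S)$ gives the order-unit axiom; Hahn--Banach separation from the closed cone $M_n(\cl S)^+$ gives the reverse direction of the embedding). Two points that you dismiss as routine deserve to be made explicit. First, the paper's matrix order on $\cl S^{dd}$ declares $(F_{ij})\in M_n(\cl S^{dd})^+$ when the map $\cl S^d \ni f \mapsto (F_{ij}(f)) \in M_n$ is completely positive, whereas you declare positivity by pairing against $M_n(\cl S^d)^+$ at level $n$ only; these agree by the Choi--Paulsen lemma that a map into $M_n$ is cp if and only if it is $n$-positive if and only if its associated functional on the $n\times n$ matrix level is positive, and one should note that the proof of that lemma uses only the compatibility axiom $A^*C_kA\subseteq C_n$ of the matrix ordering on $\cl S^d$, so it is legitimately available here even though $\cl S^d$ need not carry an order unit. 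Second, properness of the cones $M_n(\cl S^{dd})^+$ is \emph{not} immediate from your definition: if $\langle F,\varphi\rangle = 0$ for all $\varphi \in M_n(\cl S^d)^+$ you must conclude $F=0$, which requires knowing that the positive cone of $M_n(\cl S^d)$ spans $(M_n(\cl S))^d$, i.e.\ the Jordan decomposition of self-adjoint functionals on the operator system $M_n(\cl S)$ into differences of positive functionals. Without that, $\hat e\otimes I_n$ could be an order unit for an improper cone and $\cl S^{dd}$ would fail the Choi--Effros axioms. Finally, a cosmetic slip: the quantity $\sum_{i,j}\varphi_{ij}(s_{ij})$ is not a trace pairing with the identity of $M_n\otimes M_n$ but rather $\bigl\langle \psi_\varphi^{(n)}((s_{ij}))\eta,\eta\bigr\rangle$ for $\eta=\sum_i e_i\otimes e_i$; the positivity conclusion is unaffected.
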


A state $f$ on $\cl S$ (i.e. a positive linear functional with $f(e)=1$) is called \textbf{faithful} if $s \geq 0$ and $f(s) = 0$
implies that $s = 0$, in other words $f$ maps non-zero positive elements to non-zero positive scalars. Every
finite dimensional operator system $\cl S$ possesses a faithful state \cite[Sec. 6]{CE2}, and consequently, the matricially
ordered space $\cl S^d$ is again an operator system with the (non-canonical) Archimedean matrix order unit $f$.

\subsection{Quotients} A subspace $J$ of an operator system $\cl S$ is called a \textbf{kernel}
if it is the kernel of a ucp map defined from $\cl S$ into another operator system $\cl T$.
Note that $J$ has to  be a $*$-closed and non-unital subspace of $\cl S$, however, these
properties, in general, do not characterize a kernel. When $J \subset \cl S$ is a kernel
the algebraic quotient $\cl S / J$ has a natural operator system structure with unit $\dot{e} = e + J$.
We first define
$$
D_n = \{(\dot{s}_{ij}) : \;\; (s_{ij})\mbox{ is positive in } M_n(\cl S)\}.
$$
$\cl S / J$ together with cones $\{D_n\}_{n=1}^\infty$ form a matricially ordered space, moreover,
$\dot e$ can be shown to be a matrix order unit. However $D_n$ may fail to be closed in the order topology
induced by $\dot e_n$ and therefore another step, namely the completion of the cones, (also known as the Archimedeanization process)
is required:
$$
C_n = \{(\dot{s}_{ij}) : \;\; (\dot s_{ij}) + \epsilon \dot{e}_n \mbox{ is  in }  D_n \mbox{ for all } \epsilon > 0\}.
$$
Now $\cl S / J$ with matricial order structure  $\{C_n\}_{n=1}^\infty$ and unit $\dot e$ is an operator system
and is called the quotient operator system of $\cl S$ by $J$. $J$ is said to be \textbf{proximinal} if $D_1 = C_1$ and
\textbf{completely proximinal} if $D_n = C_n$.

\begin{remark}
A finite dimensional subspace $J$ of an operator system $\cl S$ is a called a \textbf{null-subspace} if it is closed
under the involution $*$ and does not contain any positive other than 0. In \cite{Kav} it was shown that
every null subspace is a completely proximinal kernel.
\end{remark}

\begin{example}
Let $J_n \subset M_n$ be the set of diagonal operators with 0-trace. Clearly $J_n$ is a null-subspace. So it is a
completely proximinal kernel.
\end{example}

\begin{example}
Let $y \in \cl S$ be a self-adjoint element that is neither positive nor negative. Then $J = span\{y\}$
is one dimensional null-subspace of $\cl S$.
\end{example}

\begin{example}
Let $\bb F_n$ be the free group on $n$-generators. Let $C^*(\bb F_n)$ be the full group C*-algebra of $\bb F_n$.
Consider $J = span\{u_1,...,u_n,u_1^*,...,u_n^*\}$ where $u_1,...,u_n$ are the universal unitaries. 
Then $J$ is a null subspace and therefore a completely proximinal
kernel in  $C^*(\bb F_n)$.
\end{example}

The operator system quotients have the following compatibility property with the morphisms: Letting
$J \subset \cl S$ be a kernel, if $\varphi: \cl S \rightarrow \cl T$ is cp map with $J \subset ker(\varphi)$ then
the induced map $\bar \varphi: \cl S / J \rightarrow \cl T$ is still a cp map. Conversely if $\psi :  \cl S / J \rightarrow \cl T$
is a cp map then $\psi \circ q : \cl S \rightarrow \cl T$, where $q:  \cl S \rightarrow \cl S / J$ is the quotient map,
is a cp map with kernel including $J$.

A surjective cp map $\varphi: \cl S \rightarrow \cl T$ is called a \textbf{complete quotient map} if the induced map
$\bar \varphi : \cl S /ker(\varphi) \rightarrow \cl T$, which is cp, is a complete order isomorphism. These maps are
the dual notions of the complete order embeddings. Following is from \cite[Sec. 2]{Kav2}.

\begin{theorem}
Let $\cl S$ and $\cl T$ be finite dimensional operator systems. If $i: \cl S \rightarrow \cl T $ is a complete order
embedding then the adjoint map $i^d : \cl T^d \rightarrow \cl S^d$ is a complete quotient map. Moreover
by special selection of faithful states on $\cl S$ and $\cl T$ one may suppose that $i^d$ is also unital and the kernel
of $i^d$ is a null-subspace of $\cl T^d$.
\end{theorem}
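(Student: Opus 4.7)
My plan is to use Arveson's extension theorem (which applies since $M_n$ is injective) as the key tool, combined with a careful analysis of how the matricial order on the dual corresponds to cp maps into matrix algebras. Recall that $(f_{ij}) \in M_n(\cl T^d)$ is positive precisely when the map $t \mapsto (f_{ij}(t))$ is a cp map $\cl T \to M_n$, and likewise for $\cl S^d$; under this identification, the adjoint $i^d$ acts as restriction of cp maps along $i$.

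First I would verify that $i^d$ is surjective: since $\cl T$ is finite-dimensional, every linear functional on $i(\cl S)$ extends to one on $\cl T$ by Hahn--Banach, so the Banach-level quotient $\bar{i^d}: \cl T^d/\ker(i^d) \to \cl S^d$ is a bijective cp map. To show $\bar{i^d}$ is a complete order isomorphism, it suffices to check that its inverse is cp. Given a positive $(f_{ij}) \in M_n(\cl S^d)$, the associated cp map $\phi: \cl S \to M_n$ extends to a cp map $\tilde \phi : \cl T \to M_n$ by Arveson's extension theorem; the matrix $(\tilde f_{ij}) \in M_n(\cl T^d)$ corresponding to $\tilde\phi$ is positive and restricts to $(f_{ij})$, so its image under the quotient map is a positive preimage of $(f_{ij})$ in $M_n(\cl T^d/\ker(i^d))$. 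This is exactly the definition of a complete quotient map.

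For the ``moreover'' clause, I would first pick a faithful state $f_{\cl T}$ on $\cl T$ (available in finite dimensions) and set $f_{\cl S} := i^d(f_{\cl T}) = f_{\cl T} \circ i$. Checking $f_{\cl S}$ is again a faithful state: it is unital since $i$ is unital, positive since $i$ and $f_{\cl T}$ are positive, and faithful since $i$ is a complete order embedding (so $s \geq 0$, $f_{\cl S}(s) = f_{\cl T}(i(s)) = 0$ forces $i(s) = 0$, hence $s = 0$). With $f_{\cl T}, f_{\cl S}$ chosen as the Archimedean matrix order units on $\cl T^d, \cl S^d$ respectively, $i^d$ becomes unital.

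Finally, for the null-subspace property of $\ker(i^d)$: $*$-closedness is immediate from $i^d(g^*) = (i^d g)^*$, and finite-dimensionality is automatic. For the absence of nonzero positive elements, suppose $g \in \ker(i^d)$ is positive in $\cl T^d$, i.e. a positive linear functional on $\cl T$. Since $e_{\cl T} = i(e_{\cl S})$, we get $g(e_{\cl T}) = (i^d g)(e_{\cl S}) = 0$; but any positive functional with $g(e) = 0$ vanishes on $\cl T$, because for self-adjoint $t$ the inequality $-\|t\| e_{\cl T} \leq t \leq \|t\| e_{\cl T}$ forces $|g(t)| \leq \|t\| g(e_{\cl T}) = 0$. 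The only real obstacle is bookkeeping around the Archimedeanization of the quotient cones; this is avoided cleanly by producing actual positive lifts via Arveson rather than working with the Archimedean closure directly.
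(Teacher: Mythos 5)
Your argument is correct and is essentially the standard proof of this result (the paper itself only cites it from \cite[Sec.~2]{Kav2}, where the same Arveson-extension argument is used): extending a cp map $\cl S \to M_n$ to $\cl T$ produces a genuinely positive lift in $M_n(\cl T^d)$, which shows the image of $D_n$ already exhausts $M_n(\cl S^d)^+$ and so bypasses the Archimedeanization issue, and your choices $f_{\cl S}=f_{\cl T}\circ i$ and the $g(e_{\cl T})=0$ computation handle the unitality and null-subspace claims correctly. The only implicit assumption worth making explicit is that $i$ is unital (as is standard for complete order embeddings in this category), since both Arveson's extension step and the identity $e_{\cl T}=i(e_{\cl S})$ rely on it.
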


\noindent So roughly speaking if $\cl S \subset \cl T$ then we get $\cl S^d = \cl T^d / J$ for some null-subspace $J \subset \cl T^d$. A moment
of thought shows that $J$ has to be the collection of linear functional that vanish on $\cl S$. The converse of this also true \cite{pf}.

\begin{theorem}[Farenick, Paulsen]
Let $q: \cl S \rightarrow \cl T$ be a complete quotient map. Then $q^d : \cl T^d \rightarrow \cl S^d$ is a complete order
embedding.
\end{theorem}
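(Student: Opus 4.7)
The plan is to verify that $q^d$ is injective and that, for each $n$, a matrix $(f_{ij}) \in M_n(\cl T^d)$ is positive if and only if $(q^d(f_{ij})) = (f_{ij} \circ q) \in M_n(\cl S^d)$ is positive. Injectivity is immediate: since $q$ is surjective (a complete quotient map is, by definition, surjective), $f \circ q = 0$ forces $f$ to vanish on $q(\cl S) = \cl T$, so $f = 0$.

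For the order structure, I would unpack the matricial order on duals given in the Duality subsection: $(f_{ij}) \in M_n(\cl T^d)$ is positive precisely when the associated map $\Phi_f : \cl T \to M_n$, $\Phi_f(t) = (f_{ij}(t))$, is cp, and analogously for $\cl S^d$. The forward direction is then routine: if $\Phi_f$ is cp, then $\Phi_f \circ q$ is cp as a composition of cp maps, and this composition is exactly the map $s \mapsto (q^d(f_{ij})(s))$, so $(q^d(f_{ij}))$ is positive in $M_n(\cl S^d)$.

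The substantive direction is the reverse. Assuming the map $\Psi : \cl S \to M_n$ sending $s \mapsto (f_{ij}(q(s)))$ is cp, I would use that $\Psi$ vanishes on $\ker(q)$ to factor it through the quotient as a cp map $\bar\Psi : \cl S / \ker(q) \to M_n$, invoking the compatibility of cp maps with operator system quotients recorded just before the theorem. The complete quotient hypothesis then supplies a complete order isomorphism $\bar q : \cl S / \ker(q) \to \cl T$, and composing with $\bar q^{-1}$ yields a cp map $\bar\Psi \circ \bar q^{-1} : \cl T \to M_n$, which by construction is $\Phi_f$. Hence $(f_{ij})$ is positive in $M_n(\cl T^d)$.

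The main obstacle, modest though it is, is recognizing that the ``complete quotient'' hypothesis is precisely the bridge needed to convert a cp map out of $\cl S$ vanishing on $\ker(q)$ into a cp map out of $\cl T$; without it one only gets cp maps out of $\cl S / \ker(q)$, which a priori carries a finer order than $\cl T$. With that identification in hand the argument collapses into a diagram chase carried out uniformly at every matricial level, which simultaneously delivers the complete order embedding property.
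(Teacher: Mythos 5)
Your argument is correct and complete: injectivity from surjectivity of $q$, complete positivity of $q^d$ from composition, and the reverse implication by factoring a cp map $\cl S \to M_n$ that kills $\ker(q)$ through $\cl S/\ker(q)$ and then transporting it along $\bar q^{-1}$, which is exactly where the complete quotient hypothesis enters. The paper itself states this result without proof, citing Farenick--Paulsen, so there is nothing internal to compare against; your route is the standard dual argument and needs no repair.
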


\noindent A \textit{unitality} problem of $q^d$ may occur in this case. We need the kernel of $q$ to be a null-subspace
to be able to assume $q^d$ is unital (by proper selections of faithful states on $S$ and $\cl T$). This remark together with the 
above exhibit how null-subspaces occur naturally.

\section{Tensor Products of Operator Systems}

In this section we recall the axiomatic definition of tensor products in the
category of operator systems and review properties of several tensor
products established in \cite{kptt}. Suppose $\cl S$ and $\cl T$ are two
operator systems. A matricial cone structure $\tau = \{C_n\}$ on $\cl S \otimes
\cl T$  where $C_n \subset M_n(\cl S \otimes \cl T)_{sa}$,  is called a
tensor product structure if \begin{enumerate}

\item $(\cl S \otimes \cl T,\{C_n\},e_{\cl S}\otimes e_{\cl T})$ is an operator
system,

\item for any $(s_{ij}) \in M_n(\cl S)^+$ and $(t_{rs}) \in M_k(\cl T)^+$,
$(s_{ij} \otimes t_{rs})$ is in $C_{nk}$ for all $n,k$,

\item if $\phi: \cl S \rightarrow M_n$ and $\psi: \cl T \rightarrow M_k$ are ucp
maps then $\phi\otimes\psi:\cl S \otimes \cl T \rightarrow M_{nk}$ is a ucp map
for every $n$ and $k$.
\end{enumerate}
A mapping $\tau:\cl O \times \cl O\rightarrow \cl O$ is said to be an
\textit{operator system tensor product} (or simply a \textit{tensor product})
provided $\tau$ maps each pair $(\cl S,\cl T)$ to a a tensor product structure
on $\cl S \otimes \cl T$, denoted by $\cl S \otimes_{\tau} \cl T$. A tensor
product $\tau$ is said to be \textbf{functorial} if for every operator systems
$\cl S_1, \cl S_2, \cl T_1$ and $\cl T_2$ and every ucp maps $\phi: \cl
S_1\rightarrow \cl S_2$ and $\psi: \cl T_1\rightarrow \cl T_2$ the associated
map $\phi \otimes \psi: \cl S_1 \otimes_{\tau} \cl T_1 \rightarrow \cl S_2
\otimes_{\tau} \cl T_2$ is ucp. A tensor product $\tau$ is called
\textit{symmetric} if $\cl S \otimes_{\tau} \cl T = \cl T \otimes_{\tau} \cl S $
and \textit{associative} if $(\cl S \otimes_{\tau} \cl T)\otimes_{\tau} \cl R =
\cl S \otimes_{\tau} (\cl T \otimes_{\tau} \cl R) $ for every $\cl S, \cl T$ and
$\cl R$.

$ $

There is a natural partial order on the operator system tensor products: If
$\tau_1$ and $\tau_2$ are two tensor products then we say that $\tau_1 \leq
\tau_2$ if for every operator systems $\cl S$ and $\cl T$ the identity $id: \cl
S \otimes_{\tau_2} \cl T \rightarrow \cl S \otimes_{\tau_1} \cl T$ is completely
positive. In other words $\tau_1$ is smaller with respect to $\tau_2$ if the
cones it generates are larger. (Recall that larger matricial cones generate
smaller canonical operator space structure.) The partial order on operator
system tensor products forms a lattice as pointed out in \cite[Sec. 7]{kptt}
and raises fundamental nuclearity properties as we shall discuss in the next
section.

$ $

In the remaining of this section we discuss several important tensor products,
namely the minimal (min), maximal (max), maximal commuting (c), enveloping left
(el) and enveloping right (er) tensor products. With respect to the partial
order relation given in the previous paragraph we have the following schema
\cite{kptt} :
$$
min \;\; \leq \;\; el\;\;  ,\;\;  er\;\; \leq \;\; c \;\; \leq \;\; max.
$$

\subsection{Minimal Tensor Product} Let $\cl S$ and $\cl T$ be two operator 
systems. We define the matricial cone
structure on the tensor product $\cl S \otimes \cl T$ as follows:
\begin{align*}
C_n^{min}(\cl S,\cl T) = \{(u_{ij}) &\in M_n(\cl
S\otimes \cl T) : ((\phi\otimes\psi)(u_{ij}))_{ij} \in
M_{nkm}^+ \\
& \mbox{ for every ucp maps } \phi : \cl S \to M_k  \mbox{ and }  \psi :  \cl
T \to M_m \mbox{ for all $k,m$.}\}.
\end{align*} 
The matricial cone structure $\{C_n^{min}\}$ satisfies the axioms (1), (2), and
(3) and the resulting operator system is denoted by $\cl S \otimes_{min} \cl
T$. For the proofs of the following we refer the reader to  \cite[Sec. 4]{kptt}.

\begin{enumerate}
 \item If $\tau$ is another operator system structure on $\cl S \otimes \cl T$ then
we have that $min \leq \tau$. In other words $\{C_n^{min}\}$ forms the
largest cone structure.

 \item The minimal tensor product, when considered
as a map $min : \cl O \times \cl O \rightarrow \cl O$, is symmetric,
associative and functorial.

 \item \textit{min} is injective in the sense that if $\cl S_1
\subset \cl S_2$ and $\cl T_1 \subset \cl T_2$ then $\cl S_1 \otimes_{min} \cl
T_1 \subset \cl S_2 \otimes_{min} \cl T_2$ completely order isomorphically.

\item \textit{min} is \textit{spatial}, that is, if
$\cl S \subset B(H)$ and $\cl T \subset B(K)$ then the concrete operator system
structure on $\cl S \otimes \cl T$ arising from the inclusion $B(H\otimes K)$
coincides with the minimal tensor product. From this one easily derives that $min$
coincides with the the C*-algebraic minimal tensor products when restricted to
unital C*-algebras (except for completion). 

\end{enumerate}

\subsection{Maximal Tensor Product}\label{subsec max} 
The construction of the maximal tensor product of two operator systems $\cl S$
and $\cl T$ requires two steps. We first define
$$
D_n^{max}(\cl S,\cl T) = \{A^* (P\otimes Q) A :
P\in M_k(\cl S)^+, Q\in M_m(\cl T)^+, A \in M_{km,n}, \\ k,m\in \bb{N}\}.
$$
Although the matricial order structure $\{ D_n^{max}\}$ is strict and compatible (for the
definitions see \cite[Chp. 13]{Pa} e.g.) it might not be closed with respect to the
order topology and hence another step, namely the completion of the cones, is required.
Since after this step $e_n$ is an Archimedean order unit this process is also known as the
Archimedeanization process (see \cite{pt} e.g).  We define
$$
C_n^{max}(\cl S,\cl T) = \{ P \in M_n (\cl S \otimes \cl T) : r(e_1 \otimes
e_2)_n +P \in D_n^{max}(\cl S, \cl T) \text{ $\forall$ $r>0$} \}.
$$

Now the matrix order structure $\{ C_n^{max}\}$ satisfies all the axioms and the
resulting operator system is denoted by $\cl S \otimes_{max} \cl T$. Below we listed the main properties
of this tensor product:

\begin{enumerate}

 \item Let $\tau$ is another operator system structure on $\cl S \otimes \cl T$ then
$\tau \leq max $, that is,  $\{C_n^{max}\}$ is the smallest cone
structure. 

 \item \textit{max}, as min, has all properties symmetry, associativity and
functoriality. Moreover, like min, it has the strong functoriality in the sense
that if $\varphi_1 : \cl S _i \rightarrow \cl T_i$ are cp maps then the associative
tensor map $\varphi_{1}\otimes \varphi_2  :\cl S_1\otimes_{max} \cl S_2 \rightarrow \cl T_1 \otimes_{max} \cl T_2$
is again cp.

 \item \textit{max} coincides with the C*-algebraic maximal tensor product when
restricted to unital C*-algebras (again, except for completion). 

 \item As it is well known from C*-algebras, max does not have  the injectivity property that
min possesses. However it is projective as shown by Han \cite{Han}: if $q_1 : \cl S_1 \rightarrow \cl T_1$
and $q_2 : \cl S_2 \rightarrow \cl T_2$ are complete quotients maps then the tensor map
$$
q_1 \otimes q_2  :\cl S_1\otimes_{max} \cl S_2 \rightarrow \cl T_1 \otimes_{max} \cl T_2
$$
is again a complete quotient map.

 \item Lance's duality result regarding the maximal tensor products for C*-algebras in  \cite{La2} can be extended to
general operator systems: A linear map $f : \cl S \otimes_{max} \cl T \rightarrow \mathbb{C}$
is positive if and only if the corresponding map $\varphi_f : \cl S \rightarrow \cl T^d$ is completely
positive. Here $\varphi_f(s)$ is the linear functional on $T$ given by $\varphi_f(s)(t) = f(s \otimes t)$.
(See also \cite[Lem. 5.7 and Thm. 5.8]{kptt}.) Consequently we obtain the following representation 
of the maximal tensor product:
$$
(\cl S \otimes_{max} \cl T)^{d,+} = CP(\cl S, T^d).
$$
\end{enumerate}

\subsection{(Maximal) Commuting Tensor Product}\label{subsec c}
Another important tensor product we want to discuss is the commuting (or maximal commuting)
tensor product which is denoted by c. It agrees with the C*-algebraic maximal tensor
products on the category of unital C*-algebras however it is different then max
for general operator systems. We define the matricial order structure by using the
ucp maps  with commuting ranges. More precisely, if $\cl S$ and $\cl T$ are two
operator systems then $C^{com}_n$ consist of all $(u_{ij}) \in M_n(\cl S\otimes
\cl T)$ with the property that for any Hilbert space $H$,  any ucp $\phi:\cl S
\rightarrow B(H)$ and $\psi:\cl T \rightarrow B(H)$ with commuting ranges
$$
(\phi\cdot \psi)^{(n)} (u_{ij}) \geq 0
$$
where $\phi \cdot \psi: \cl S \otimes \cl T \rightarrow B(H)$ is the map defined
by $\phi \cdot \psi(s\otimes t) = \phi(s)\psi(t)$. The matricial cone
structure $\{C^{com}_n\}$ satisfies the axioms (1), (2) and (3), and the
resulting operator system is denoted by $\cl S \otimes_{c} \cl T $. We again list the main
properties of this tensor product:

\begin{enumerate}
 \item The commuting tensor product $c$ is functorial and symmetric however we don't know
whether is it associative  or not. Though for every $n$ we have
$$
M_n(\cl S \otimes_{c} \cl T) = M_n(\cl S) \otimes_{c} \cl T = \cl S \otimes_{c}M_n( \cl T).
$$
\item  If $\tau$ is an operator
system structure on $\cl S \otimes \cl T$ such that $\cl S \otimes_{\tau} \cl T$
attains a representation in a $B(H)$ with ``$\cl S$'' and ``$\cl T$'' portions
are commuting then $\tau \leq c$. This directly follows from the definition of c
and justifies the name ``maximal commuting''.

\item As we pointed out $c$ and $max$ coincides on the unital C*-algebras. This result can be extended
even further: If $\cl A$ is a unital $C^*$-algebra and $\cl S$ is an operator
system, then $ \cl A \otimes_{c} \cl S = \cl A \otimes_{\max} \cl S. $

\item For every $\cl S$ and $\cl T$ we have the unital complete order embedding
$$
\cl S \otimes_{c} \cl T \subset C^*_u(\cl S)  \otimes_{max} C^*_u(\cl T).
$$
\item The ucp maps defined by the commuting tensor product are the compression of
the ucp maps with commuting ranges, that is, if  $\varphi: \cl S \otimes_{c} \cl T \rightarrow B(H)$ is a ucp map
Then there is Hilbert space $K$ containing $H$ as a Hilbert subspace and ucp maps $\phi: \cl S \rightarrow B(K)$ and
 $\psi: \cl T \rightarrow B(K)$ with commuting ranges such that $\varphi = P_H \phi \cdot \psi |_H $. Conversely, every
such map is ucp.
\end{enumerate}

\subsection{Some Asymmetric Tensor Products}

In this subsection we discuss the enveloping left (el) and enveloping right 
(er) tensor products. Given operator systems $\cl S$ and $\cl T$ we define 
$$
\cl S \otimes_{el} \cl T : \subseteq I(\cl S) \otimes_{max} \cl T \mbox{ and }
\cl S \otimes_{er} \cl T : \subseteq \cl S \otimes_{max} I(\cl T)
$$  
where $I(\cdot)$ is the injective envelope of an operator system. We remark
that $ I(\cl S) \otimes_{c} \cl T =  I(\cl S) \otimes_{max} \cl T$ as the
$I(\cl S)$ has the structure of a C*-algebra. Here are main properties:

\begin{enumerate}
 \item \textit{el} and \textit{er} are functorial tensor products. We don't
know whether these tensor products are associative or not. However for every $n$ we have
$$
M_n(\cl S \otimes_{el} \cl T) = M_n(\cl S) \otimes_{el} \cl T = \cl S \otimes_{el} M_n( \cl T).
$$
A similar ``associativity with matrix algebras'' holds for \textit{er} too.

\item Both \textit{el} and \textit{er} are not symmetric but they are asymmetric in the sense that 
$$ \cl S \otimes_{el} \cl T
=\cl T \otimes_{er} \cl S \;\;\; \mbox{ via the map } \;\;\; s \otimes t \mapsto
t \otimes s.
$$ 

\item The tensor product \textit{el} is the maximal left injective functorial tensor
product, that is, for any $\cl S \subset \cl S_1$ and $\cl T$ we have
$$
\cl S \otimes_{el} \cl T \subseteq \cl S_1 \otimes_{el} \cl T
$$
and it is the maximal functorial tensor product with this property. Likewise, er is the maximal right injective
tensor product.

\item Tensor product $el$ is independent of the injective operator system that containing $\cl S$ as an operator subsystem. For example
if $\cl S \subset B(H)$ then we have the complete order embedding $\cl S \otimes_{el} \cl T \subseteq B(H) \otimes_{max} \cl T$. This simply
follows from the left injectivity of $el$ and the fact that $el$ and and $max$ coincides if the left tensorant is an injective operator system
which directly follows from the definition.

\item $el$ and $er$ are in general not comparable but they both lie between $min$ and $c$.
\end{enumerate}

$ $

\section{Characterization of Various Nuclearities}\label{sec nuclearity}

In the previous section we have reviewed the tensor products in the category of
operator systems. In this section we will overview the behavior of the
operator systems under tensor products. More precisely, we will see several
characterizations of the operator systems that fix a pair of tensor products.

$ $

Given two tensor products $\tau_1 \leq \tau_2 $, an operator systems $\cl S$ is
said to be\textit{ $(\tau_1,\tau_2)$-nuclear} provided $ \cl S \otimes_{\tau_1}
\cl T = \cl S \otimes_{\tau_2} \cl T $ for every operator system $\cl T$. 
We remark that the \textit{place} of the operator system $\cl S$ is important as
not all the tensor products are symmetric.

\subsection{Operator System Local Lifting Property (osLLP)}\label{sec osLLP} 
We want to start with a discussion of (operator system) local lifting property
(osLLP) which characterizes the operator systems having (min,er)-nuclearity. 

\begin{definition}
An operator system $\cl S$ is said to have \textbf{osLLP} if for every unital C*-algebra $\cl
A$ and ideal $I$ in $\cl A$ and for every ucp map $\varphi: \cl S \rightarrow
\cl A / \cl I$ the following holds: For every finite dimensional operator
subsystem $\cl S_0 $ of $\cl S$, the restriction of $\varphi$ on $\cl S_0$, say
$\varphi_0$, lifts to a completely positive map on $\cl A$ so that the following diagram
commutes (where $q: \cl A \rightarrow \cl A / I$ is the quotient map).
$$
\xymatrix{
   &     &  \cl A \ar[d]^q \\
\hspace{2cm}\cl S_0 \subset \cl S \ar[rr]_-{ucp \; \varphi} 
\ar@{.>}[rru]^{\tilde{\varphi}_0} &     &   \cl A/ \cl I} \hspace{4cm}
$$
\end{definition}

Of course, $\cl S$ may possess osLLP without a global lifting. We also remark
that the completely positive local liftings can also be chosen to be ucp  in the
definition of osLLP (see the discussion in \cite[Sec. 8]{kptt2}). The LLP
definition for a C*-algebra given in \cite{Ki2} is the same. So it follows that
a unital C*-algebra has LLP (in the sense of Kirchberg) if and only if it has
osLLP. We can now state the connection of osLLP and tensor products given in \cite{kptt2}:

\begin{theorem}\label{thm LLPchar} 
The following are equivalent for an operator system $\cl S$:
\begin{enumerate}
 \item $\cl S$ has osLLP.
 \item $\cl S \otimes_{min} B(H) = \cl S \otimes_{max} B(H)$ for every Hilbert
space $H$ $($or for $H = l^2(\mathbb{N}))$.
 \item $\cl S$ is (min,er)-nuclear, that is, $\cl S \otimes_{min} \cl T = \cl S
\otimes_{er} \cl T$ for every $\cl T$.
\end{enumerate}
\end{theorem}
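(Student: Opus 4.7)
The strategy is to dispatch the formal equivalence $(2)\Leftrightarrow(3)$ directly from injectivity properties of the tensor products, and then prove the two nontrivial implications $(1)\Leftrightarrow(2)$, which is where essentially all the content lies.

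For $(2)\Leftrightarrow(3)$: since $B(H)$ is an injective operator system one has $I(B(H))=B(H)$, and therefore $\cl S\otimes_{er} B(H)=\cl S\otimes_{max} B(H)$ (using that $er$ coincides with $max$ when the right tensorant is injective). Thus $(3)$ specialised to $\cl T=B(H)$ yields $(2)$. Conversely, given $(2)$ and an arbitrary $\cl T$, fix a faithful unital complete order embedding $\cl T\hookrightarrow B(H)$; the injectivity of $min$ and the right injectivity of $er$ then yield complete order embeddings $\cl S\otimes_{min}\cl T\hookrightarrow\cl S\otimes_{min} B(H)$ and $\cl S\otimes_{er}\cl T\hookrightarrow\cl S\otimes_{er} B(H)$, and $(2)$ identifies the two ambient systems. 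Since $min\leq er$ and the cones on the common underlying space $\cl S\otimes\cl T$ restrict from a single ambient system, they must agree, giving $(3)$.

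For $(1)\Rightarrow(2)$ I would reduce to Kirchberg's C*-algebraic LLP theorem. Write $B(H)=C^*(\bb F)/\cl I$ for a free group $\bb F$ of sufficiently large cardinality (every C*-algebra arises this way by sending free generators to unitary generators), so that Kirchberg's theorem yields $C^*(\bb F)\otimes_{min} B(H)=C^*(\bb F)\otimes_{max} B(H)$. For a positive $u\in M_n(\cl S_0\otimes_{min} B(H))$ with $\cl S_0\subseteq\cl S$ finite dimensional, osLLP of $\cl S$ produces a ucp lift $\tilde\iota_0:\cl S_0\to C^*(\bb F)$ of the inclusion $\cl S_0\hookrightarrow B(H)$, and pushing $u$ through $\tilde\iota_0\otimes id$ together with functoriality of $min$ places $(\tilde\iota_0\otimes id)(u)$ positively in $C^*(\bb F)\otimes_{max} B(H)$. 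The main obstacle is then to upgrade this from positivity in the ambient $B(H)\otimes_{max} B(H)$ (which one obtains after applying $q\otimes id$) to positivity in $\cl S\otimes_{max} B(H)$ itself; for this I would use the complete order embedding $\cl S\otimes_{max} B(H)=\cl S\otimes_c B(H)\hookrightarrow C^*_u(\cl S)\otimes_{max} B(H)$ afforded by properties (3)--(4) of the commuting tensor product, together with a bootstrapping argument showing that osLLP of $\cl S$ forces LLP of $C^*_u(\cl S)$ --- this last step being the subtle one, since finite dimensional operator subsystems of $C^*_u(\cl S)$ need not lie inside $\cl S$.

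For $(2)\Rightarrow(1)$, given a ucp map $\varphi:\cl S\to\cl A/\cl I$ and a finite dimensional $\cl S_0\subseteq\cl S$, I would represent $\cl A\subseteq B(H)$ faithfully and exploit Lance's duality $(\cl S_0\otimes_{max}\cl A)^{d,+}=CP(\cl S_0,\cl A^d)$ to encode the obstruction to a cp lift of $\varphi|_{\cl S_0}$ as a state on $\cl S_0\otimes_{max}\cl A$ annihilating $\cl S_0\otimes\cl I$ but not descending through the quotient. A Hahn--Banach separation argument produces such a state whenever no lift exists; the hypothesis $(2)$, combined with the faithful inclusion $\cl A\hookrightarrow B(H)$ and the injectivity of $min$, would force this state to descend through the quotient, yielding a contradiction. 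Once Lance's duality, Han's projectivity of $max$, and the tensor identity of $(2)$ are in hand, this direction should be more routine than $(1)\Rightarrow(2)$.
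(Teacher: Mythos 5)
Your handling of $(2)\Leftrightarrow(3)$ via the injectivity of $min$, the right injectivity of $er$, and $I(B(H))=B(H)$ is correct and standard. The genuine gap is in $(1)\Rightarrow(2)$: you present the \emph{second} tensor factor $B(H)$ as a quotient $C^*(\bb F)/\cl I$ and lift ``the inclusion $\cl S_0\hookrightarrow B(H)$''. For such a map to exist you are tacitly representing $\cl S$ on the very Hilbert space $H$ appearing in the tensor product, and, more seriously, after applying $q\otimes \id$ you land in $B(H)\otimes_{max}B(H)$, whose restriction to $\cl S\otimes B(H)$ is by definition $\cl S\otimes_{el}B(H)$, not $\cl S\otimes_{max}B(H)$. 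So the argument as set up proves $min=el$ against $B(H)$ (an exactness-flavoured conclusion), and the remaining jump from $el$ to $max$ is precisely the WEP-type statement that osLLP does not provide. Your proposed repair --- that osLLP of $\cl S$ forces LLP of $C^*_u(\cl S)$ --- is left entirely unproved; since $\cl S$ need not contain enough unitaries in $C^*_u(\cl S)$, Proposition \ref{pro enough} does not apply, and I do not see how to obtain that claim without already having the theorem in hand.

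The correct move is to apply the free-group presentation to the first factor's universal C*-cover rather than to $B(H)$: write $C^*_u(\cl S)=C^*(\bb F)/\cl I$ and use osLLP on the single ucp map $\cl S\hookrightarrow C^*_u(\cl S)=C^*(\bb F)/\cl I$ to lift its restriction to $\cl S_0$ to a cp map $\tilde\iota_0:\cl S_0\to C^*(\bb F)$. Then
$$
\cl S_0\otimes_{min}B(H)\xrightarrow{\;\tilde\iota_0\otimes \id\;}C^*(\bb F)\otimes_{min}B(H)=C^*(\bb F)\otimes_{max}B(H)\xrightarrow{\;q\otimes \id\;}C^*_u(\cl S)\otimes_{max}B(H)
$$
is a composition of (u)cp maps (Kirchberg's theorem in the middle, functoriality of $min$ and $max$ on either side) carrying $u$ to itself, and the complete order embedding $\cl S\otimes_{max}B(H)=\cl S\otimes_{c}B(H)\subseteq C^*_u(\cl S)\otimes_{max}B(H)$ then gives $u\geq 0$ in $\cl S\otimes_{max}B(H)$ directly; no bootstrapping to LLP of $C^*_u(\cl S)$ is needed. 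I would add that your sketch of $(2)\Rightarrow(1)$ is too thin to verify: this is the Effros--Haagerup-type direction, and you never say where hypothesis $(2)$ actually enters the separation argument --- the working mechanism is that cp liftability of a ucp map from a finite-dimensional $\cl S_0$ into $\cl A/\cl I$ is equivalent to a positivity statement in a suitable tensor product involving $\cl S_0^d$, after which one must genuinely use the tensor identity against $B(H)$; ``a Hahn--Banach argument produces such a state'' does not yet engage with that.
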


\noindent Note that if $\cl A$ is a C*-algebra then the equivalence of (1) and (2) recovers
a well known result of Kirchberg \cite{Ki2}. If we let $\mathbb{B}$ denote $B(l^2(\mathbb{N}))$, 
the above equivalent conditions, in some similar context,
is also called $\mathbb{B}$-nuclearity. (See \cite{blecher}, e.g.)
Consequently for operator systems osLLP, $\mathbb{B}$-nuclearity and
(min,er)-nuclearity are all equivalent.

\begin{remark}
The definition of LLP of a C*-algebra in \cite[Chp. 16 ]{pi} is different, it
requires completely contractive liftings from finite dimensional operator
subspaces. However, as it can be seen in \cite[Thm. 16.2]{pi}, all the
approaches coincide for C*-algebras. 
\end{remark}

\noindent \textbf{Note:} When we work with the finite dimensional operator
systems we remove the extra word ``local'', we even remove ``os'' and simply say
``lifting property''.

\subsection{Weak Expectation Property (WEP)}

\noindent If $\cl A$ is a unital C*-algebra then the bidual C*-algebra $\cl
A^{**}$ is unitally and completely order isomorphic to the bidual operator system
$\cl A^{dd}$. This allows us to extend the notion of weak expectation to a more general setting.
We say that an operator system $\cl S$ has
\textit{WEP}  if the canonical inclusion $i:\cl S \hookrightarrow \cl S^{dd}$
extends to a ucp map on the injective envelope $I(\cl S)$.
$$
\xymatrix{
 \cl S \ar@{^{(}->}[rr]^{i} \ar@{}[d]|-{\bigcap} &     & \cl S^{dd}  \\
 I(\cl S) \ar@{.>}[rru] &     &
}
$$
In \cite{kptt2} it was shown that  WEP implies (el,max)-nuclearity and the
difficult converse is shown in \cite{Han}. Consequently we have that
\begin{theorem}
An operator system has WEP if and only if it is (el,max)-nuclear.
\end{theorem}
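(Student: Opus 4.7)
The plan is to use Lance's duality for the maximal tensor product (point (5) of Subsection 2.2) --- the bijection between positive linear functionals on $\cl S \otimes_{max} \cl T$ and completely positive maps $\cl S \to \cl T^d$ --- as the main bridge for both directions.

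For the forward direction (WEP $\Rightarrow$ (el,max)-nuclearity), I would fix an operator system $\cl T$ and an element $u \in M_n(\cl S \otimes_{el} \cl T)^+$. By the very definition of $el$, $u$ is positive in $M_n(I(\cl S) \otimes_{max} \cl T)$. Let $\phi : I(\cl S) \to \cl S^{dd}$ be the ucp map extending the canonical inclusion provided by WEP. Functoriality of max gives a ucp map $\phi \otimes \id_{\cl T} : I(\cl S) \otimes_{max} \cl T \to \cl S^{dd} \otimes_{max} \cl T$, and since $\phi$ restricts to the inclusion on $\cl S$, the image of $u$ is $u$ itself viewed in $\cl S^{dd} \otimes \cl T$; hence $u$ is positive in $M_n(\cl S^{dd} \otimes_{max} \cl T)$. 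The key step is then to establish the lemma that the canonical map
$$
\cl S \otimes_{max} \cl T \;\hookrightarrow\; \cl S^{dd} \otimes_{max} \cl T
$$
is a complete order embedding. By Lance's duality applied at each matricial level (using associativity of max to write $M_k(\cl S \otimes_{max} \cl T) = \cl S \otimes_{max} M_k(\cl T)$), matricial positivity is detected by cp maps $\cl S \to M_k(\cl T^d)$ (respectively $\cl S^{dd} \to M_k(\cl T^d)$). Since $M_k(\cl T^d)$ is a dual operator system, every cp map from $\cl S$ into it extends uniquely weak-$*$-continuously to $\cl S^{dd}$, so the two positivity tests agree on elements of $\cl S \otimes \cl T$. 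Descending from positivity in $\cl S^{dd} \otimes_{max} \cl T$ to $\cl S \otimes_{max} \cl T$ then gives (el,max)-nuclearity.

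For the converse, I would specialize $\cl T = \cl S^d$ and consider the evaluation functional $f_0$ on $\cl S \otimes \cl S^d$ defined by $f_0(s \otimes \alpha) = \alpha(s)$. Lance's duality identifies $f_0$ with the canonical inclusion $i : \cl S \hookrightarrow \cl S^{dd}$, which is ucp (hence cp), so $f_0$ is a positive linear functional on $\cl S \otimes_{max} \cl S^d$. By the (el,max)-nuclearity hypothesis $f_0$ is also positive on $\cl S \otimes_{el} \cl S^d$, which by definition of $el$ is an operator subsystem of $I(\cl S) \otimes_{max} \cl S^d$. A Krein-type extension of positive linear functionals on operator systems then produces a positive extension $\tilde f_0$ to all of $I(\cl S) \otimes_{max} \cl S^d$, and Lance's duality converts $\tilde f_0$ into a cp map $\tilde\varphi : I(\cl S) \to \cl S^{dd}$ whose restriction to $\cl S$ is $i$. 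Because $1_{I(\cl S)} = 1_{\cl S}$, the restriction being unital forces $\tilde\varphi$ itself to be unital, so $\tilde\varphi$ is ucp --- which is precisely the definition of WEP.

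The main obstacle in this plan is the complete order embedding lemma used in the forward direction. Since max is not injective in general, the embedding cannot be obtained formally; the argument relies crucially on the special structure of the bidual embedding $\cl S \hookrightarrow \cl S^{dd}$ into a dual operator system, so that weak-$*$-continuous extensions of cp maps into $M_k(\cl T^d)$ are available. The converse, by contrast, passes off cleanly once Lance's duality and the Krein-type extension are in place, the unitality of $\tilde\varphi$ being a pleasant consequence rather than a real difficulty.
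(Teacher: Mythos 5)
Your forward direction is sound, and it is essentially the standard argument (note that the paper itself offers no proof of this theorem: it cites \cite{kptt2} for the implication WEP $\Rightarrow$ (el,max)-nuclearity and \cite{Han} for what it explicitly calls ``the difficult converse''). The weak expectation $\phi\colon I(\cl S)\to\cl S^{dd}$ pushes positivity from $I(\cl S)\otimes_{max}\cl T$ into $\cl S^{dd}\otimes_{max}\cl T$ by functoriality of $max$, and your key lemma --- that $\cl S\otimes_{max}\cl T\subset\cl S^{dd}\otimes_{max}\cl T$ completely order isomorphically --- is correct and is proved exactly as you indicate: a cp map $\cl S\to M_k(\cl T^d)$ extends to a cp map on $\cl S^{dd}$ by taking the double adjoint and composing with the canonical cp projection of $(M_k(\cl T^d))^{dd}$ onto $M_k(\cl T^d)$, and ground-level positivity in an operator system is detected by states.

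The converse, however, has a genuine gap, located precisely where the difficulty of this direction lies. You apply the (el,max)-nuclearity hypothesis with $\cl T=\cl S^d$, but the hypothesis quantifies over \emph{operator systems} $\cl T$, and $\cl S^d$ is an operator system only when $\cl S$ is finite dimensional: as recalled in Subsection 1.1 of the paper, one needs a faithful state to serve as an Archimedean matrix order unit, and in infinite dimensions $\cl S^d$ is merely a matrix-ordered $*$-vector space with no order unit. Consequently $\cl S\otimes_{el}\cl S^d$, $\cl S\otimes_{max}\cl S^d$ and $I(\cl S)\otimes_{max}\cl S^d$ are simply not defined in this framework, so the hypothesis cannot be invoked; moreover the Krein-type extension of the positive functional $f_0$ from $\cl S\otimes\cl S^d$ to $I(\cl S)\otimes\cl S^d$ also requires an order unit in the ambient space to control the norm of the functional before extending it. In the finite-dimensional case your argument does go through, but there WEP forces $\cl S$ to be a C*-algebra (as the paper notes), so the entire content of the theorem lives in the infinite-dimensional case that your choice of $\cl T$ excludes. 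Repairing this --- for instance by testing against finite-dimensional operator systems mapping into $\cl S^d$ and assembling the resulting approximate expectations by a point-weak-$*$ compactness argument in the dual space $\cl S^{dd}$ --- is the substantive work of Han's proof; it is not a formal consequence of Lance's duality alone.
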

\noindent If $\cl S$ is a finite dimensional operator system then $\cl S$
has WEP if and only if $\cl S$ has the structure of a C*-algebra (so it is the direct sum of matrix algebras).
This follows by the fact that the canonical injection $\cl S \hookrightarrow \cl S^{dd}$ is also surjective. Consequently, the \textit{expectation} (the extended
ucp map from $I(\cl S)$ into $\cl S^{dd}$) can be used to define a multiplication on $\cl S^{dd} \cong \cl S$ (see \cite[Theorem 15.2]{Pa}, e.g.).

\subsection{Double Commutant Expectation Property (DCEP)}\label{sec DCEP}

For a unital C*-algebra $\cl A$ the following are equivalent:
\begin{enumerate}
 \item The canonical inclusion $\cl A$ into $\cl A^{**}$ extends to cp map on $I(\cl A)$.
 \item For all (unital, C*-algebraic) inclusion $\cl A \subset B(H)$ there is a ucp map $\gamma : B(H) \rightarrow \cl A^{''}$,
where $\cl A^{''}$ is the double commutant of $\cl A$, extending the inclusion $\cl A \subset B(H)$.
\end{enumerate}
In either case we say that $\cl A$ has the WEP. As pointed out by Vern Paulsen it can be shown, 
by using Arveson's commutant lifting theorem (\cite{Ar1} or \cite[Thm. 12.7]{Pa}) e.g., that in (2) the C*-algebraic
inclusion can be replaced by unital complete order embedding. Moreover, by using straightforward injectivity
techniques, one can replace the domain of $\gamma$ by the injective envelope $I(\cl A)$ of $\cl A$. These two
equivalent notions differ in operator system setting. While (1) leads to the concept of WEP for general operator systems (above), 
(2) extends as follows: 

\begin{definition}
We say that $\cl S$ has
\textbf{DCEP} if every (operator system) inclusion $\cl S \subset B(H)$ extends
to a ucp map from $I(\cl S)$ into $\cl S''$, the double commutant of $\cl S$ in
$B(H)$.
$$\hspace{2cm}
\xymatrix{
 \cl S \ar@{^{(}->}[rr] \ar@{}[d]|-{\bigcap} &     &B(H) \supseteq  S''
\hspace{3cm}\\
 I(\cl S) \ar@{.>}[rru] &     &
}
$$
\end{definition}

\noindent Many fundamental results and problems regarding WEP for unital C*-algebras
reduce to DCEP for general operator systems. The following is a
direct consequence of Theorem 7.1 and 7.6 in  \cite{kptt2}:
\begin{theorem}
The following are equivalent for an operator system $\cl S$:
\begin{enumerate}
 \item  $\cl S$ is (el,c)-nuclear, that is, $\cl S \otimes_{el} \cl T = \cl S \otimes_{c} \cl T$ for every $\cl T$.
 \item $\cl S$ has DCEP.
 \item $\cl S \otimes_{min} C^*(\mathbb{F}_\infty) = \cl S \otimes_{max} C^*(\mathbb{F}_\infty)$.
 \item For any $\cl S \subset \cl A$ and $\cl B$, where $\cl A$ and $\cl B$ are
unital C*-algebras, we have the (operator system) embedding  $\cl S \otimes_{max} \cl B \subset \cl
A \otimes_{max} \cl B$.
\end{enumerate}
\end{theorem}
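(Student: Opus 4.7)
The plan is to arrange the four conditions in a cycle $(2) \Rightarrow (1) \Rightarrow (3) \Rightarrow (2)$ together with $(1) \Leftrightarrow (4)$. Two structural observations drive everything: the injective envelope $I(\cl S)$ carries a canonical C*-algebra structure, so $I(\cl S) \otimes_c \cl T = I(\cl S) \otimes_{max} \cl T$ for every $\cl T$; and by definition $\cl S \otimes_{el} \cl T$ sits as an operator subsystem of $I(\cl S) \otimes_{max} \cl T$. Together these let me pass freely between DCEP-style extension statements and $(el,c)$-nuclearity statements.

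For $(2) \Rightarrow (1)$, I would take a commuting ucp pair $\phi : \cl S \to B(H)$, $\psi : \cl T \to B(H)$ (the data of $\otimes_c$) and invoke DCEP to extend $\phi$ to $\tilde\phi : I(\cl S) \to \phi(\cl S)''$. Since $\psi(\cl T) \subseteq \phi(\cl S)'$, the range of $\tilde\phi$ continues to commute with $\psi(\cl T)$, so $\tilde\phi \cdot \psi$ is ucp on $I(\cl S) \otimes_{max} \cl T$ and restricts to $\phi \cdot \psi$ on $\cl S \otimes_{el} \cl T$; combined with the automatic $el \le c$ this yields $(el,c)$-nuclearity. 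For $(1) \Rightarrow (3)$, the LLP of $C^*(\bb F_\infty)$ together with Theorem~\ref{thm LLPchar} and the $el$--$er$ swap gives $\cl S \otimes_{min} C^*(\bb F_\infty) = \cl S \otimes_{el} C^*(\bb F_\infty)$; $(1)$ then identifies this with $\cl S \otimes_c C^*(\bb F_\infty) = \cl S \otimes_{max} C^*(\bb F_\infty)$, the last equality by property (3) of $\otimes_c$.

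The implication $(3) \Rightarrow (2)$ is the step requiring the most care. Given $\cl S \subset B(H)$, I would pick a unital $*$-surjection $\pi : C^*(\bb F_\kappa) \twoheadrightarrow \cl S'$ onto the commutant (choosing $\kappa$ large enough that free unitary generators of $\bb F_\kappa$ map onto a generating family of unitaries of $\cl S'$). Then $s \otimes w \mapsto s\,\pi(w)$ is ucp from $\cl S \otimes_c C^*(\bb F_\kappa) = \cl S \otimes_{max} C^*(\bb F_\kappa)$ into $B(H)$ by commuting ranges, so by $(3)$ it is also ucp from $\cl S \otimes_{min} C^*(\bb F_\kappa)$. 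Injectivity of $B(H)$ extends this to a ucp map $\tilde\mu : B(H) \otimes_{min} C^*(\bb F_\kappa) \to B(H)$, and the standard multiplicative-domain argument applied to $1 \otimes u$ with $\pi(u) \in \cl S'$ forces the ucp map $T \mapsto \tilde\mu(T \otimes 1)$ to land inside $\cl S''$. Composing with an injective extension from $I(\cl S)$ yields DCEP. The delicate points are the choice of $\kappa$ (a mild direct-limit argument in the nonseparable case) and tracking multiplicative-domain control through the extension.

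Finally, $(1) \Leftrightarrow (4)$: assuming $(1)$, any $u \in \cl S \otimes \cl B$ positive in $\cl A \otimes_{max} \cl B$ is positive in $\cl A \otimes_{el} \cl B$ (since $el \le max$), hence positive in $\cl S \otimes_{el} \cl B = \cl S \otimes_c \cl B = \cl S \otimes_{max} \cl B$ by left injectivity of $\otimes_{el}$, $(1)$, and property (3) of $\otimes_c$, giving the embedding of $(4)$. The converse $(4) \Rightarrow (2)$ runs the same multiplicative-domain scheme as $(3) \Rightarrow (2)$ but with $\cl A = B(H)$ and $\cl B = \cl S'$: the multiplication $\cl S \otimes_{max} \cl S' \to B(H)$ extends via $(4)$ and injectivity of $B(H)$ to $B(H) \otimes_{max} \cl S' \to B(H)$, and the compression $T \mapsto \tilde m(T \otimes 1)$ lands in $\cl S''$ by the same multiplicative-domain computation.
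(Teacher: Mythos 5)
The paper does not actually prove this theorem; it is quoted as ``a direct consequence of Theorem 7.1 and 7.6 in \cite{kptt2}'', so there is no internal proof to compare against. What you have written is essentially the standard Kirchberg--Lance argument that the cited source uses: the cycle $(2)\Rightarrow(1)\Rightarrow(3)\Rightarrow(2)$ plus $(1)\Leftrightarrow(4)$, with $(1)\Rightarrow(3)$ driven by the LLP of $C^*(\bb F_\infty)$ and the $el$--$er$ swap, and both $(3)\Rightarrow(2)$ and $(4)\Rightarrow(2)$ driven by the multiplicative-domain trick applied to $1\otimes u$ for unitaries mapping onto (a generating set of) $\cl S'$. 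Those steps, including the extension by injectivity of $B(H)$ and the final composition with a ucp map $I(\cl S)\to B(H)$, are sound, and your flag about passing from $\bb F_\infty$ to $\bb F_\kappa$ in the nonseparable case is the right thing to worry about and is handled by the usual finite-subset/retraction argument.

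The one genuine gap is in $(2)\Rightarrow(1)$. DCEP, as defined in the paper, only provides extensions $I(\cl S)\to \cl S''$ for \emph{inclusions} (unital complete order embeddings) $\cl S\subset B(H)$, whereas the data defining $\otimes_c$ is an arbitrary commuting pair of ucp maps $\phi:\cl S\to B(H)$, $\psi:\cl T\to B(H)$; your $\tilde\phi$ does not exist verbatim when $\phi$ fails to be an embedding. The standard repair: fix a complete order embedding $\iota:\cl S\to B(K)$ and a state $f$ on $\cl T$, replace the pair by $\bigl(\phi\oplus\iota,\ \psi\oplus f(\cdot)1_K\bigr)$ on $B(H\oplus K)$ --- this is a commuting pair in which the first map \emph{is} an embedding --- apply DCEP there, and compress back to $H$ to recover $\phi\cdot\psi$ as a ucp map on $\cl S\otimes_{el}\cl T$. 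With that adjustment the implication goes through; the rest of your outline I consider correct.
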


\noindent Here $C^*(\mathbb{F}_\infty)$ is the full group C*-algebra of the free group on countably infinite number of generators $\mathbb{F}_{\infty}$.
Note that (3) is Kirchberg's WEP characterization in \cite{Ki2} and (4) is
Lance's seminuclearity in \cite{La} for unital C*-algebras.

Note that the equivalence WEP and DCEP for unital C*-algebras can be inferred from the fact that $c$ and $max$ coincide
when one of the tensorant is a C*-algebra. So (el,max)-nuclearity (WEP) and (el,c)-nuclearity (DCEP)
coincide in this case. For general operator systems WEP $\Longrightarrow$ DCEP which simply
follows from  (el,max)-nuclearity implies  (el,c)-nuclearity. The converse fails even on two dimensional
operator systems: In \cite{Kav2} it was shown that every two dimensional operator system
is (min,c)-nuclear hence they have DCEP. But, by the last paragraph in the above subsection,
only  $\bb C^2$ (and its isomorphism class) has WEP.

\subsection{Exactness}\label{subsec exact} 
The importance of exactness and its connection to the tensor theory of
C*-algebras ensued by Kirchberg \cite{Ki1}, \cite{Ki2}. Exactness is a
categorical concept and requires a correct notion of quotient theory. The operator
system quotients established in \cite{kptt2} that we reviewed in preliminaries 
has been used to extend the exactness to general operator systems. Before
starting the definition we recall a couple of results from \cite{kptt2}: Let $\cl
S$ be an operator system, $\cl A$ be a unital C*-algebra and $I$ be an ideal in
$\cl A$. Then $\cl S \bar{\otimes} I $ is a kernel in $ \cl S 
\hat{\otimes}_{min} \cl A$ where  $\hat{\otimes}_{min}$ represents the completed
minimal tensor product and $\bar{\otimes}$ denotes the closure of the algebraic
tensor product in the larger space. By using the functoriality of the minimal
tensor product it is easy to see that the map
$$
 \cl S \hat{\otimes}_{min} \cl A \xrightarrow{id\otimes q} \cl S
\hat{\otimes}_{min} (\cl A / I),
$$
where $id$ is the identity on $\cl S$ and $q$ is the quotient map from $\cl A$
onto $\cl A / I$, is ucp and its kernel contains $\cl S \bar{\otimes} I $.
Consequently the induced map
$$
 (\cl S \hat{\otimes}_{min} \cl A) /  (\cl S \bar{\otimes} I ) \longrightarrow
\cl S \hat{\otimes}_{min} (\cl A / I)
$$
is still unital and completely positive. An operator system is said to be
\textbf{exact} if this induced map is a bijective and a complete order
isomorphism for every C*-algebra $\cl A$ and ideal $I$ in $\cl A$. In other
words we have the equality
$$
 (\cl S \hat{\otimes}_{min} \cl A) /  (\cl S \bar{\otimes} I ) = \cl S
\hat{\otimes}_{min} (\cl A / I).
$$
We remark that the induced map may fail to be surjective or injective, moreover
even if it has these properties it may fail to be a complete order isomorphism.

\begin{remark}\label{rem ex for fd} If $\cl S$ is finite dimensional then $\cl S \otimes_{min} \cl A = \cl S \hat{\otimes}_{min} \cl A$
and $\cl S \bar{\otimes} I = \cl S \otimes I $. Moreover the induced (ucp) map
$$
(\cl S \otimes_{min} \cl A) /  (\cl S \otimes I ) \longrightarrow
\cl S \otimes_{min} (\cl A / I)
$$
 is always bijective. Thus, for this case, exactness is equivalent to the inverse of induced map being a complete order isomorphism
for every $\cl A$ and $I \subset \cl A$ (see \cite{Kav2}).
\end{remark}

\noindent \textbf{Note:} The term exactness in this paper coincides with
1-exactness in \cite{kptt2}.

$ $

\noindent A unital C*-algebra is exact (in the sense of Kirchberg) if and only
if it is an exact operator system which follows from the fact that the unital C*-algebra ideal quotient coincides
with the operator system kernel quotient. The following is Theorem 5.7 of \cite{kptt2}:

\begin{theorem}\label{thm exact=(min,el)}
An operator system is exact if and only if it is (min,el)-nuclear.
\end{theorem}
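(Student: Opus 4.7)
The plan is to prove both implications by exploiting the characterization of $\otimes_{el}$ as the maximal functorial left-injective tensor product, the identification $el=max$ when the left tensorand is an injective operator system (property~(4) of $el$), and Han's projectivity of $\otimes_{max}$.

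\textbf{(min,el)-nuclear $\Rightarrow$ exact.} Fix a unital C*-algebra $\cl A$ and an ideal $I\subseteq \cl A$. I would first set up the commutative diagram
$$
\cl S \hat\otimes_{min}\cl A \;=\; \cl S \hat\otimes_{el}\cl A \;\hookrightarrow\; I(\cl S) \hat\otimes_{max}\cl A
$$
$$
\cl S \hat\otimes_{min}(\cl A/I) \;=\; \cl S \hat\otimes_{el}(\cl A/I) \;\hookrightarrow\; I(\cl S) \hat\otimes_{max}(\cl A/I),
$$
joined by the vertical maps $id_{\cl S}\otimes q$. The left-most equalities use (min,el)-nuclearity, while the right-most complete order embeddings use the left-injectivity of $el$ together with the coincidence $I(\cl S)\otimes_{el}\cdot=I(\cl S)\otimes_{max}\cdot$ since $I(\cl S)$ is an injective C*-algebra. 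By Han's projectivity (property~(4) of $\otimes_{max}$), the rightmost vertical map is a complete quotient map with kernel $I(\cl S)\bar\otimes I$.

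Next I would verify bijectivity of the induced map
$$
\Phi:(\cl S\hat\otimes_{min}\cl A)/(\cl S\bar\otimes I)\longrightarrow \cl S\hat\otimes_{min}(\cl A/I).
$$
Injectivity reduces to the identity $(\cl S\hat\otimes_{min}\cl A)\cap (I(\cl S)\bar\otimes I) = \cl S\bar\otimes I$, which follows from a slice-map argument: any $f\in I^\perp\subseteq\cl A^d$ induces a slice $id\otimes f$ vanishing on $I(\cl S)\bar\otimes I$, and the slice-map characterization of the kernel of $id\otimes q$ on the min side identifies the intersection with $\cl S\bar\otimes I$. Surjectivity is routine from algebraic surjectivity of $id\otimes q$ together with density. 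For the complete order isomorphism, given $u\in M_n(\cl S\hat\otimes_{min}(\cl A/I))^+$, view $u$ as positive in $M_n(I(\cl S)\hat\otimes_{max}(\cl A/I))$, lift it (modulo $\varepsilon e_n$) to a positive $\tilde u\in M_n(I(\cl S)\hat\otimes_{max}\cl A)$ by max-projectivity, and observe that its class modulo $I(\cl S)\bar\otimes I$ is positive. Restriction to the subsystem $(\cl S\hat\otimes_{min}\cl A)/(\cl S\bar\otimes I)\hookrightarrow (I(\cl S)\hat\otimes_{max}\cl A)/(I(\cl S)\bar\otimes I)$ then yields a positive preimage of $u+\varepsilon e_n$, and taking $\varepsilon\to 0$ together with the Archimedean property of the quotient cones completes the argument.

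\textbf{Exact $\Rightarrow$ (min,el)-nuclear.} For any operator system $\cl T$ I would present it as an operator subsystem of its universal C*-algebra $C^*_u(\cl T)$, and write $C^*_u(\cl T)=C^*(\bb F)/J$ as an ideal quotient of the full group C*-algebra of a free group $\bb F$ on sufficiently many generators. Since $C^*(\bb F)$ has LLP, Theorem~\ref{thm LLPchar} gives $\cl S\otimes_{min}C^*(\bb F)=\cl S\otimes_{max}C^*(\bb F)$. Applying the exactness hypothesis on the min side, while using max-projectivity on the max side, collapses both tensor structures on $C^*_u(\cl T)$. Restricting back to the subsystem $\cl T\subset C^*_u(\cl T)$ via left-injectivity of $el$ produces the desired equality $\cl S\otimes_{min}\cl T=\cl S\otimes_{el}\cl T$.

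\textbf{Main obstacle.} The principal difficulty is the positive lifting through the quotient $I(\cl S)\hat\otimes_{max}\cl A\twoheadrightarrow I(\cl S)\hat\otimes_{max}(\cl A/I)$ while landing in the smaller subsystem $(\cl S\hat\otimes_{min}\cl A)/(\cl S\bar\otimes I)$. The hypothesis of (min,el)-nuclearity is precisely what reconciles the two cone structures, but bookkeeping the Archimedeanization in the quotient cone and the kernel identifications across different tensor products requires some care, and the cleanest execution may route through Lance-type duality (property~(5) of $\otimes_{max}$).
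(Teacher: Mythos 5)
First, a point of reference: the paper does not prove this statement at all; it is quoted as Theorem 5.7 of \cite{kptt2}, so there is no in-paper argument to measure yours against. Your architecture (realize $\cl S\otimes_{el}\cl A$ inside $I(\cl S)\otimes_{max}\cl A$, exploit projectivity of $\otimes_{max}$ over C*-ideals, and reduce the converse to quotients of $C^*(\bb F)$ via LLP) is the right one, but the two steps on which the theorem actually turns are not proved. The injectivity step is circular: you assert $(\cl S\hat\otimes_{min}\cl A)\cap(I(\cl S)\bar\otimes I)=\cl S\bar\otimes I$ ``by the slice-map characterization of the kernel of $id\otimes q$ on the min side.'' No such characterization holds: every element of $\ker(id\otimes q)$ is killed by all slices $id\otimes f$ with $f\in I^{\perp}$, but the set of elements so killed can strictly contain $\cl S\bar\otimes I$ --- this is exactly Wassermann's slice-map problem, whose failure is the very source of non-exactness. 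The correct route is different: in the C*-algebra $I(\cl S)\hat\otimes_{max}\cl A$ the ideal $I(\cl S)\bar\otimes_{max}I$ admits an approximate unit $1\otimes e_\lambda$ with $e_\lambda$ an approximate unit of $I$; multiplying an element of $(\cl S\hat\otimes_{el}\cl A)\cap(I(\cl S)\bar\otimes_{max}I)$ by $1\otimes e_\lambda$ produces approximants lying in $\cl S\otimes I$, and only then does (min,el)-nuclearity let you read the convergence in $\cl S\hat\otimes_{min}\cl A$.

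The more serious gap is the positivity step, which you flag as the ``main obstacle'' and then dispose of with ``restriction to the subsystem \dots yields a positive preimage.'' The lift $\tilde u\in M_n(I(\cl S)\hat\otimes_{max}\cl A)^+$ furnished by max-projectivity does not lie in $M_n(\cl S\hat\otimes_{min}\cl A)$, and positivity of its class in the large quotient $(I(\cl S)\hat\otimes_{max}\cl A)/(I(\cl S)\bar\otimes I)$ gives nothing about the sub-quotient $(\cl S\hat\otimes_{min}\cl A)/(\cl S\bar\otimes I)$: the canonical map from the sub-quotient into the large quotient factors precisely through the map $\Phi$ whose complete order injectivity is the assertion being proved, so ``restricting'' assumes the conclusion. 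One must manufacture a positive representative \emph{inside} $M_n(\cl S\hat\otimes_{min}\cl A)$, e.g.\ by compressing a self-adjoint algebraic preimage of $u$ with $1\otimes(1-e_\lambda)^{1/2}$ for a quasicentral approximate unit, comparing with $\tilde u$, and invoking (min,el)-nuclearity to recognize the outcome as positive for the min structure. Two further slips in the converse direction: LLP of $C^*(\bb F)$ together with Theorem \ref{thm LLPchar} yields only $\cl S\otimes_{min}C^*(\bb F)=\cl S\otimes_{el}C^*(\bb F)$ (via (min,er)-nuclearity of $C^*(\bb F)$ and the flip), not $\cl S\otimes_{min}C^*(\bb F)=\cl S\otimes_{max}C^*(\bb F)$, which would require a hypothesis on $\cl S$; and ``left-injectivity of $el$'' cannot be used to pass from $C^*_u(\cl T)$ down to $\cl T$, since $\cl T$ occupies the right tensor factor --- there you need the separate embedding $I(\cl S)\otimes_{max}\cl T\subseteq I(\cl S)\otimes_{max}C^*_u(\cl T)$ coming from the universal property of $\otimes_{c}$ with a C*-algebra on the left.
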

In Theorem \ref{thm ex-dual-LP} we will see that  exactness and the lifting property
are dual pairs. We want to finish this subsection with the following stability property \cite{kptt2}:

\begin{proposition}\label{prop exac pass ss}
Exactness passes to operator subsystems. That is, if $\cl S$ is exact then every operator subsystem of $\cl S$
is exact. Conversely, if every finite dimensional operator subsystem of $\cl S$ is exact then $\cl S$ is exact.
\end{proposition}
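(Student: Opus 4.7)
The plan is to invoke Theorem \ref{thm exact=(min,el)} and reduce the whole statement to the assertion that $(min,el)$-nuclearity passes to arbitrary subsystems and is detected on finite-dimensional subsystems. Both parts then follow by combining two functorial facts recalled in Section 2: $\otimes_{min}$ is injective (Section 2.1, property (3)) and $\otimes_{el}$ is left-injective (Section 2.4, property (3)).

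For the forward direction, suppose $\cl S$ is $(min,el)$-nuclear and let $\cl S_0 \subseteq \cl S$. For any operator system $\cl T$, injectivity of $min$ yields a complete order embedding $\cl S_0 \otimes_{min} \cl T \hookrightarrow \cl S \otimes_{min} \cl T$, while left-injectivity of $el$ yields $\cl S_0 \otimes_{el} \cl T \hookrightarrow \cl S \otimes_{el} \cl T$. By hypothesis the two outer operator systems share their matricial cones, so the identification $\cl S_0 \otimes_{min} \cl T = \cl S_0 \otimes_{el} \cl T$ is immediate. Hence $\cl S_0$ is exact.

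For the converse direction, assume every finite-dimensional operator subsystem of $\cl S$ is $(min,el)$-nuclear and fix an operator system $\cl T$. Since $min \leq el$ one inclusion of cones is automatic; it suffices to show that for every $n$, any $u \in M_n(\cl S \otimes_{min} \cl T)^+$ is also positive in $M_n(\cl S \otimes_{el} \cl T)$. Such a $u$ has only finitely many $\cl S$-components $s_1, \ldots, s_k$, so it lies in $M_n(\cl S_0 \otimes \cl T)$ where $\cl S_0 := \mathrm{span}\{e, s_i, s_i^* : 1 \leq i \leq k\}$ is a finite-dimensional operator subsystem of $\cl S$. Injectivity of $min$ places $u$ in $M_n(\cl S_0 \otimes_{min} \cl T)^+$; the hypothesis on $\cl S_0$ yields $u \in M_n(\cl S_0 \otimes_{el} \cl T)^+$; and finally left-injectivity of $el$ transfers this to $u \in M_n(\cl S \otimes_{el} \cl T)^+$.

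No step is genuinely difficult — the heart of the argument is Theorem \ref{thm exact=(min,el)} together with the injectivity properties of $min$ and $el$. The only point deserving care is the reduction to finite dimensions in the converse: this works precisely because at the algebraic tensor product level every element has finite $\cl S$-support, which fits into the $*$-closed unital hull of its $\cl S$-coordinates. One might worry that exactness is formulated via the completed minimal tensor product $\hat\otimes_{min}$ with a unital C*-algebra, whereas the $(min,el)$-nuclearity characterization is purely algebraic; Theorem \ref{thm exact=(min,el)} is exactly what bridges this gap, so no additional approximation argument is required.
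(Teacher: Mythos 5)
Your proof is correct, and it follows the natural route: the paper itself states this proposition without proof (citing \cite{kptt2}), and the standard argument there is exactly the one you give, namely translating exactness into $(min,el)$-nuclearity via Theorem \ref{thm exact=(min,el)} and then combining the injectivity of $\otimes_{min}$ with the left injectivity of $\otimes_{el}$, together with the finite-support reduction for the converse. No gaps.
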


\subsection{Final Remarks, Stability, and Examples on Nuclearity}
In this subsection we review the behavior of nuclearity related properties under basic algebraic operations such as
quotients and duality. We start with
the following nuclearity schema which summarizes the tensorial characterizations of several properties we have discussed:
$$\hspace{4cm}
\xymatrix{
\;\;\;\;\;\;\;\; min \;\;\; \leq \ar@{-}@/^1pc/[r]^{\;\;\;\;\;exactness}  
\ar@{-}@/^4pc/[rrr]^{\mbox{C*-nuclearity}} 
\ar@{-}@/_2pc/[rr]_{\mbox{osLLP}}  
  &
el    \ar@{-}@/_2pc/[rr]_{\mbox{DCEP}} 
   & er & \leq \;\;\;  c
\hspace{1cm}&  & &
}
\hspace{4cm}
\xymatrix{
\;\;\;\;\;\;\;\; min \;\;\; \leq \ar@{-}@/^1pc/[r]^{\;\;\;\;\;exactness}  
\ar@{-}@/^4pc/[rrr]^{\mbox{C*-nuclearity}} 
\ar@{-}@/_2pc/[rr]_{\mbox{osLLP}}  
  &
el    \ar@{-}@/_2pc/[rr]_{\mbox{DCEP}} 
   & er & \leq \;\;\;  c
\hspace{1cm}&  & &
}
$$
\noindent An operator system $\cl S$ is said to be \textbf{C*-nuclear} if $\cl S \otimes_{min} \cl A = \cl S \otimes_{max} \cl A$ for every C*-algebra
$\cl A$. It is elementary to show that (\cite{Kav2}) C*-nuclearity and (min,c)-nuclearity coincide.

A C*-algebra $\cl A$ is said to be \textbf{nuclear} if $\cl A \otimes_{min} \cl B = \cl A \otimes_{max} \cl B$ for every C*-algebra $\cl B$. It follows
that a unital C*-algebra $\cl A$ is nuclear if and only if it is (min,max)-nuclear operator systems. So in this case
$\cl A$ has the all the properties in the above schema. We also remark that Han and Paulsen \cite{HP}
prove that an operator system $\cl S$ is (min,max)-nuclear if and only if it has \textit{completely positive factorization property} (in the sense of \cite{Ki1}).
This extends a well known result of Choi and Effros on nuclear C*-algebras.

\begin{remark}
Let $\cl S$ be a finite dimensional operator system. Then $\cl S$ is (c,max)-nuclear 
if and only if it has the structure of a C*-algebra (\cite{Kav2}). Consequently, if $\cl S$ is
a non-C*-algebra then C*-nuclearity is the highest nuclearity that one should expect
(of course among $min\leq el,er \leq c \leq max$). This, on the finite dimensional case, puts the importance
of above mentioned properties.
\end{remark}

\begin{example}
It is evident from the above table that exactness and DCEP together are equivalent to C*-nuclearity. It was shown
in \cite{Kav2} that the operator subsystem
$$
\cl R = span\{I, E_{12},E_{21},E_{34},E_{43}\} \subset M_4
$$
does not have the lifting property. Clearly it is exact (since $M_4$ is nuclear, in particular, $M_4$ is exact so any operator subsystem is exact). Note that
$\cl R$ cannot have DCEP as DCEP + exactness = C*-nuclearity $ \Rightarrow $ LP. 
\end{example}

\begin{example}\label{ex WEPLLP}
Suppose $\cl A$ and $\cl B$ are unital C*-algebras such that $\cl A$ has LLP (eq. osLLP) and $\cl B$ has WEP (eq. DCEP). Then, by using
the tensor characterizations in the above table, we have
$$
\cl A \otimes_{min} \cl B = \cl A \otimes_{er} \cl B.
$$
Since $\cl B$ has WEP, (and taking into account the asymmetry of $el$ and $er$) we have
$$
 \cl A \otimes_{er} \cl B = \cl A \otimes_{c = max} \cl B.
$$
Thus $\cl A \otimes_{min} \cl B = \cl A \otimes_{max} \cl B$, so we recover a well known result of Kirchberg \cite{Ki2}.
\end{example}

The relation of exactness and the lifting property perhaps best seen in the finite dimensional case.
Following is from \cite{Kav2}.

\begin{theorem}\label{thm ex-dual-LP}
Let $\cl S$ be a finite dimensional operator system. Then $\cl S$ is exact if and only if $\cl S^d$ has the lifting
property (and vice versa). In other words, $\cl S$ is (min,el)-nuclear if and only if $\cl S^d$ is (min,er)-nuclear. 
\end{theorem}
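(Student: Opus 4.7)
The plan is to combine the tensor-product characterizations of exactness (Theorem~\ref{thm exact=(min,el)}) and of LP (Theorem~\ref{thm LLPchar}) with the finite-dimensional duality of operator-system tensor products. The overarching principle is that in the finite-dimensional category, the duality $\cl S \leftrightarrow \cl S^d$ should interchange the roles of the tensor products in a way that converts $(\min, el)$-nuclearity of $\cl S$ into $(\min, er)$-nuclearity of $\cl S^d$.

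My first step is a reduction: both nuclearity conditions may be checked against only finite-dimensional test systems $\cl T$. For $(\min, er)$ this uses right-injectivity of both $\min$ and $er$ together with the fact that every operator system is a directed union of its finite-dimensional operator subsystems; an analogous argument using left-injectivity of $el$ handles $(\min, el)$. This reduction is essential because the duality identities invoked next live naturally at the finite-dimensional level.

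My second step is to establish the duality identities for finite-dimensional $\cl S, \cl T$:
\[ (\cl S \otimes_{\min} \cl T)^d \cong \cl S^d \otimes_{\max} \cl T^d \qquad \text{and} \qquad (\cl S \otimes_{el} \cl T)^d \cong \cl S^d \otimes_{er} \cl T^d, \]
as complete order isomorphisms (with unital refinements arising from faithful states, as in the Duality subsection of the preliminaries). The first is a matricial form of Lance's duality (property~(5) of $\max$). The second is the technical heart of the argument: one unravels the defining inclusion $\cl S \otimes_{el} \cl T \subset I(\cl S) \otimes_{\max} \cl T$, applies Lance duality in the enveloping space to express the dual cone as a cone of cp-maps out of $I(\cl S)$, and then transposes this description across the duality pairing into the defining characterization of $er$ on the $\cl S^d$-side.

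With these tools in hand the theorem follows by tracing a chain of equivalences: $\cl S$ is exact iff $\cl S \otimes_{\min} \cl T = \cl S \otimes_{el} \cl T$ for every finite-dimensional $\cl T$ iff, dualizing, $\cl S^d \otimes_{\max} \cl T^d = \cl S^d \otimes_{er} \cl T^d$ for every finite-dimensional $\cl T$, which, via the involution $\cl T \leftrightarrow \cl T^d$ on finite-dimensional operator systems and one more application of the duality identities, is equivalent to $\cl S^d \otimes_{\min} \cl U = \cl S^d \otimes_{er} \cl U$ for every finite-dimensional $\cl U$ --- precisely the $(\min, er)$-nuclearity of $\cl S^d$. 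The main obstacle I foresee is the second duality identity above: because $el$ is defined through the generally infinite-dimensional injective envelope $I(\cl S)$, showing that its dual admits a purely finite-dimensional description in terms of $er$ requires careful bookkeeping of the matricial cones and of the compatible unital identification via faithful states on both sides.
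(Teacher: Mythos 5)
Your first step is essentially fine, although the reason you give for the $el$ case is off target: left-injectivity of $el$ concerns the \emph{first} tensor factor, whereas the reduction is over the test system in the \emph{second} factor. What actually makes the reduction work for both $el$ and $er$ is simply injectivity of $min$ (to restrict a $min$-positive element to a finite-dimensional operator subsystem $\cl T_0 \subset \cl T$ containing its legs) together with functoriality of the other tensor product (to push $\tau$-positivity forward along $\cl T_0 \hookrightarrow \cl T$). The serious problems are in your second and third steps. The identity $(\cl S \otimes_{el} \cl T)^d \cong \cl S^d \otimes_{er} \cl T^d$ is not established anywhere in this circle of papers, and your sketch conceals a genuine mismatch: dualizing the inclusion $\cl S \otimes_{el} \cl T \subseteq I(\cl S) \otimes_{max} \cl T$ via Arveson extension and Lance duality identifies the dual cone with those cp maps $\cl S \to \cl T^d$ that \emph{extend to} $I(\cl S)$ --- a condition governed by the injective envelope of $\cl S$ --- whereas the $er$ cone on $\cl S^d \otimes \cl T^d$ is by definition governed by $I(\cl T^d)$, the injective envelope of the \emph{other} factor. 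You give no bridge between these two descriptions, and there is no evident one.

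Worse, even if both duality identities were granted, the final link of your chain is a non sequitur. Dualizing $\cl S \otimes_{min} \cl T = \cl S \otimes_{el} \cl T$ using $(\cl S \otimes_{min}\cl T)^d = \cl S^d \otimes_{max}\cl T^d$ and your second identity yields $\cl S^d \otimes_{max} \cl R = \cl S^d \otimes_{er} \cl R$ for every finite-dimensional $\cl R$; this is an ``$(er,max)$'' statement, and no relabelling $\cl T \leftrightarrow \cl T^d$ of the test system can convert it into the $(min,er)$ statement that defines the lifting property --- that would require $min = max$ on all of these tensor products. Put differently, if your $el$--$er$ duality were correct, your argument would prove ``$\cl S^d$ has LP iff $\cl S \otimes_{el}\cl T = \cl S \otimes_{max}\cl T$ for all finite-dimensional $\cl T$,'' a WEP-type condition on $\cl S$ rather than exactness; since the theorem is true as stated, this indicates the proposed identity itself is suspect. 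The paper does not prove this theorem but cites \cite{Kav2}, where the argument is entirely different: one uses the quotient formulation of exactness, $(\cl S \otimes_{min}\cl A)/(\cl S \otimes I) = \cl S \otimes_{min}(\cl A/I)$, and the finite-dimensional correspondence $(\cl S \otimes_{min}\cl T)^+ \cong CP(\cl S^d, \cl T)$ (a consequence of Lance duality and the Farenick--Paulsen $min$--$max$ duality), so that liftability of ucp maps $\cl S^d \to \cl A/I$ translates precisely into positive elements of $\cl S \otimes_{min}(\cl A/I)$ having positive preimages in $\cl S \otimes_{min}\cl A$. No duality for $el$ or $er$ is needed. I would recommend abandoning the $el$--$er$ duality route in favour of this direct translation.
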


\begin{example}
Let $\cl R$ be the above operator system. Then the dual operator system $\cl R^d$ is not exact but
has the lifting property. We don't know whether $\cl R^d$ has DCEP. In \cite{Kav2} it was shown that the Kirchberg
conjecture has an affirmative answer if and only if $\cl R^d$ has DCEP.
\end{example}

In contrast to C*-algebra ideal quotients the lifting property is stable quotients by null-subspaces \cite{Kav2}:

\begin{theorem}\label{thm QouhasLP}
Let $\cl S$ be a finite dimensional operator system and $J$ be a null-subspace in $\cl S$. If $\cl S$ has the
lifting property then $\cl S / J$ has the same property.
\end{theorem}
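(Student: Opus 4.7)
The plan is to prove this via duality, reducing the question to a stability statement about exactness. By Theorem \ref{thm ex-dual-LP}, a finite dimensional operator system $\cl T$ has the lifting property if and only if $\cl T^d$ is exact. So it suffices to show that if $\cl S^d$ is exact, then $(\cl S/J)^d$ is exact, and then apply Theorem \ref{thm ex-dual-LP} in reverse to conclude $\cl S/J$ has LP.

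First, since $J$ is a kernel, the quotient map $q : \cl S \to \cl S/J$ is a complete quotient map. By the Farenick--Paulsen theorem, the adjoint $q^d : (\cl S/J)^d \to \cl S^d$ is a complete order embedding. The only subtlety is unitality: as noted in the paragraph following that theorem, unitality of $q^d$ can be achieved by the appropriate selection of faithful states on $\cl S$ and $\cl S/J$ precisely because $J$ is a null-subspace. Concretely, choose a faithful state $g$ on $\cl S/J$ so that $f := g\circ q$ is a faithful state on $\cl S$ (possible since $J$ contains no nonzero positive element, so $g \circ q$ vanishes only on the $0$ positive element of $\cl S$); then take $f$ and $g$ as the Archimedean order units of $\cl S^d$ and $(\cl S/J)^d$, and $q^d$ sends $g$ to $f$, hence is a unital complete order embedding.

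Now the hypothesis that $\cl S$ has LP, together with Theorem \ref{thm ex-dual-LP}, gives that $\cl S^d$ is exact. By Proposition \ref{prop exac pass ss}, exactness passes to operator subsystems; applying this to the unital complete order embedding $q^d : (\cl S/J)^d \hookrightarrow \cl S^d$ yields that $(\cl S/J)^d$ is exact. Invoking Theorem \ref{thm ex-dual-LP} once more (and using that $(\cl S/J)^{dd} \cong \cl S/J$ in the finite dimensional case) concludes that $\cl S/J$ has the lifting property.

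The main point where one must be careful, and the only place the null-subspace hypothesis is actually used, is in producing the unital complete order embedding $(\cl S/J)^d \hookrightarrow \cl S^d$; without the null-subspace condition one could still get a (possibly non-unital) complete order embedding, but matching the Archimedean order units to apply the subsystem-exactness principle requires the faithful-state construction above, which in turn requires $J$ to contain no nonzero positive element. Once this is set up the rest is a mechanical application of the duality between exactness and the lifting property together with the stability of exactness under passage to subsystems.
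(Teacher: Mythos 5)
Your proof is correct and takes the route the paper itself sets up: the statement is quoted from \cite{Kav2} without proof here, but the surrounding discussion (the exactness/lifting-property duality of Theorem~\ref{thm ex-dual-LP}, the Farenick--Paulsen embedding $q^d$, the remark that the null-subspace hypothesis is exactly what makes $q^d$ unital, and Proposition~\ref{prop exac pass ss}) supplies precisely the ingredients you assemble. Your faithful-state argument for the unitality of $q^d$ (taking $f=g\circ q$ and noting faithfulness follows since $J$ contains no nonzero positives) is the right way to close the one genuine gap, so nothing further is needed.
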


\begin{example}
Let $J_3 \subset M_3$ be the diagonal operators with 0-trace. Then $M_3 / J_3$ has the lifting property. This follows
by the fact that $M_3$ is nuclear so has the lifting property.
\end{example}

\begin{example}
Unlike to (separable) C*-algebra/ideal quotients exactness is not preserved under operator system/kernel quotients.
It was shown in \cite{Kav2} that $M_3 / J_3$ is not exact (depending heavily on a result of Wassermann \cite{Was2}). 
We don't know whether $M_3 / J_3$ has DCEP or not. This problem
is again equivalent to the Kirchberg conjecture.
\end{example}

We will also need the following fact from \cite{Han} and \cite{Kav2}. First we remark that given operator systems  $\cl S$ and $\cl T$
if $dim(T)$ is finite then the completion $\cl S \hat \otimes_{\tau} \cl T$  of any tensor product $\tau$ is
same as the algebraic tensor product.

\begin{theorem}\label{thm max-quotient}
Let $\cl S$ and $\cl T$ be operator systems with $dim(T) < \infty$ and let $J$ be a null-subspace of $\cl T$. 
Then we have the unital complete order isomorphism
$$
(\cl S \otimes_{max} \cl T) / (\cl S \otimes J) = \cl S  \otimes_{max} (\cl T/J).
$$
If, in addition,   $dim(S)< \infty$, then $\cl S \otimes J $ is a null-subspace of $ \cl S \otimes_{max} \cl T$ (and so a completely
proximinal kernel).
\end{theorem}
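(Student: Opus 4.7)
For the isomorphism $(\cl S \otimes_{\max} \cl T)/(\cl S \otimes J) = \cl S \otimes_{\max} (\cl T/J)$, the plan is to apply the projectivity of $\otimes_{\max}$ of Han (property (4) of Section~2.2) to the pair of maps $id_{\cl S} : \cl S \to \cl S$ and $q : \cl T \to \cl T/J$. The identity is trivially a complete quotient map, and $q$ is a complete quotient map by the very construction of the operator system quotient (since $J$ is a null-subspace, hence a kernel). Projectivity then gives that
$$
id_{\cl S}\otimes q : \cl S\otimes_{\max}\cl T \longrightarrow \cl S\otimes_{\max}(\cl T/J)
$$
is a unital complete quotient map. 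To identify its kernel with $\cl S\otimes J$, I would use the purely linear-algebraic fact that tensoring the short exact sequence $0\to J\to \cl T\to \cl T/J\to 0$ with $\cl S$ over $\mathbb{C}$ remains exact, so that the linear kernel of $id_{\cl S}\otimes q$ is exactly $\cl S\otimes J$. The hypothesis $\dim(\cl T)<\infty$ ensures there are no completion issues, as noted in the remark immediately preceding the theorem. Combining these gives the claimed unital complete order isomorphism.

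For the second part I want to show that, when $\dim(\cl S)<\infty$, $\cl S\otimes J$ is a null-subspace of $\cl S\otimes_{\max}\cl T$. Finite-dimensionality and $*$-closedness are immediate from the corresponding properties of $\cl S$ and $J$, so the content is that $\cl S\otimes J$ contains no nonzero positive element. The plan is a slice-map argument. Fix a basis $\{j_1,\ldots,j_m\}$ of $J$ and suppose $u=\sum_k s_k\otimes j_k$ is positive in $\cl S\otimes_{\max}\cl T$. For any state $f$ on $\cl S$, the map
$$
\Phi_f := f\otimes id_{\cl T} : \cl S\otimes_{\max}\cl T \longrightarrow \mathbb{C}\otimes_{\max}\cl T = \cl T
$$
is ucp by the strong functoriality of $\max$, so $\Phi_f(u)=\sum_k f(s_k)j_k$ is a positive element of $\cl T$ that lies in $J$. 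Since $J$ is a null-subspace, $\Phi_f(u)=0$, and linear independence of $\{j_k\}$ forces $f(s_k)=0$ for every $k$. Because $\cl S$ is finite-dimensional, states span $\cl S^d$ (the positive functionals span the dual, and each positive functional is a scalar multiple of a state), hence separate points of $\cl S$; consequently $s_k=0$ for every $k$ and $u=0$.

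Once $\cl S\otimes J$ has been shown to be a null-subspace, the final assertion that it is completely proximinal follows directly by invoking the remark (from \cite{Kav}) cited in Section~1.2 that every null-subspace is automatically a completely proximinal kernel.

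I do not foresee a serious obstacle: the first half is bookkeeping on top of Han's projectivity together with vector-space flatness, and the second half only requires the two elementary inputs that states separate points of a finite-dimensional operator system and that slicing with a state is ucp on the maximal tensor product.
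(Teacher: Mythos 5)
Your proof is correct, and it follows the route that the paper itself implicitly relies on: the paper states this theorem without proof, citing Han's projectivity of $\otimes_{\max}$ and \cite{Kav2}, and your first half is exactly that argument (projectivity applied to $id_{\cl S}\otimes q$, plus the linear-algebraic identification of the kernel with $\cl S\otimes J$, with $\dim\cl T<\infty$ disposing of completion issues). Your slice-map argument for the null-subspace claim is also sound: $f\otimes id_{\cl T}$ is cp by the strong functoriality of $\max$, its value on a positive element of $\cl S\otimes J$ is a positive element of $J$ and hence zero, and states separate points, so the coefficients vanish. No gaps.
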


\section{Coproducts of Operator Systems}

In this section we review the the amalgamated sum of two operator systems over their
unit introduced in \cite{KL} (or coproduct of two operator systems with the language
of \cite{TF}). Given two operator systems
$\cl S$ and $\cl T$ there is an operator system $\cl U$ with unital
and completely order embeddings $i: \cl S \hookrightarrow \cl U$
and $j: \cl T \hookrightarrow \cl U$ such that the following holds: Whenever
$\phi: \cl S \rightarrow \cl R$ and $\psi: \cl T \rightarrow \cl R$
are ucp maps, where $\cl R$ is any operator system, then
there is a unique ucp map $\varphi: \cl U \rightarrow \cl R$ such that
$\phi(s) = \varphi (i (s))$ and $\psi(t) = \varphi ( j (t))$ for all $s \in \cl S$
and $t \in \cl T$. We will call $\cl U$ together with embeddings $i$ and $j$ the \textit{coproduct}
of $\cl S$ and $\cl T$ and we will denote it by $\cl S \oplus_1 \cl T$. The following commuting
diagram summarizes the universal property of the coproduct.
$$
\xymatrix{
\cl S \ar@{_{(}->}[d]_i   \ar[drr]^{ucp \;\phi} && \\
\cl S   \oplus_1 \cl T  \ar@{.>}[rr]^{! \; ucp \; \varphi}  & & \cl R  \\
\cl T \ar@{^{(}->}[u]^j   \ar[urr]_{ucp\;\psi} & &
}
$$
Once such an object is proven to be exists it is easy to see that it has to be unique
up to a unital complete order isomorphism. We leave the verification of this
to the reader which is based on the fact that  $\cl U$ must be spanned
by the elements of $i(\cl S)$ and $j(\cl T)$ as $\varphi$ is uniquely determined.

There are several ways to construct the coproduct of two operator systems. We first recall
the  free product of C*-algebras. Given unital C*-algebras $\cl A$ and $\cl B$
the \textit{unital free product} $\cl A *_1 \cl B$ is a C*-algebra with C*-algebraic inclusions
$i: \cl A \rightarrow \cl A *_1 \cl B$ and $j: \cl B \rightarrow \cl A *_1 \cl B$ such that
whenever $\pi: \cl A \rightarrow \cl C$ and $\rho:  \cl B \rightarrow \cl C$ are
unital $*$-homomorphisms, where $\cl C$ is any C*-algebra, then there is a unique unital
$*$-homomorphism $\gamma: \cl A *_1 \cl B \rightarrow \cl C$ such that $\gamma(i(a)) = \pi(a)$
and $\gamma(j(b)) = \rho(b)$ for all $a \in \cl A$ and $b \in \cl B$. We often identify $\cl A$
and $\cl B$ with their canonical images in $\cl A *_1 \cl B$. First and the easiest way to see the existence
of the coproduct is the following.

\begin{proposition}
Let $\cl S$ and $\cl T$ be two operator systems. Then
$$
\cl S \oplus_1 \cl T = \{s + t : s \in \cl S, \; t \in \cl T   \} \subset C^*_u(\cl S) *_1C^*_u(\cl T). 
$$
\end{proposition}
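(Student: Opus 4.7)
My plan is to check directly that the operator subsystem $\cl U := \{s+t : s \in \cl S, \, t \in \cl T\}$ of $C^*_u(\cl S) *_1 C^*_u(\cl T)$, equipped with the obvious maps $i: s \mapsto s$ and $j: t \mapsto t$, satisfies the universal property that characterizes $\cl S \oplus_1 \cl T$. Since the excerpt already notes that the coproduct is unique up to ucoi once existence is known, this suffices.

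First I would verify that $\cl U$ is an operator subsystem of $C^*_u(\cl S) *_1 C^*_u(\cl T)$ and that $i$ and $j$ are unital complete order embeddings. The subspace $\cl U$ contains the common unit and is $*$-closed (since $\cl S$ and $\cl T$ are), so it is an operator subsystem. The map $\cl S \hookrightarrow C^*_u(\cl S)$ is a unital complete order embedding by the defining property of the universal C*-algebra, and the canonical map $C^*_u(\cl S) \hookrightarrow C^*_u(\cl S) *_1 C^*_u(\cl T)$ is an injective unital $*$-homomorphism (a standard property of the unital free product), hence a unital complete order embedding. Composing gives the required embedding of $\cl S$, and the argument for $\cl T$ is identical.

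Next I would verify the universal property. Given ucp maps $\phi: \cl S \to \cl R$ and $\psi: \cl T \to \cl R$, I view $\cl R$ inside its injective envelope $I(\cl R)$, which is a C*-algebra. Applying the universal property of $C^*_u(-)$ twice lifts $\phi$ and $\psi$ to unital $*$-homomorphisms
$$
\tilde\phi: C^*_u(\cl S) \to I(\cl R), \qquad \tilde\psi: C^*_u(\cl T) \to I(\cl R).
$$
The universal property of the unital free product then produces a single unital $*$-homomorphism $\gamma: C^*_u(\cl S) *_1 C^*_u(\cl T) \to I(\cl R)$ that restricts to $\tilde\phi$ on the first factor and to $\tilde\psi$ on the second. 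Setting $\varphi := \gamma|_{\cl U}$ gives a ucp map with $\varphi(s+t) = \phi(s) + \psi(t)$, which in particular lands in $\cl R$ even though its codomain was $I(\cl R)$. Thus $\varphi$ is the desired ucp map into $\cl R$.

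Finally, uniqueness is forced by linearity: any ucp $\varphi$ on $\cl U$ that agrees with $\phi$ on $\cl S$ and with $\psi$ on $\cl T$ must send $s+t$ to $\phi(s)+\psi(t)$, and these sums span $\cl U$. I expect the only genuinely delicate point to be the well-definedness of the prescription $s+t \mapsto \phi(s)+\psi(t)$; this rests on the fact that inside $C^*_u(\cl S) *_1 C^*_u(\cl T)$ the two factors intersect only in $\bb C \cdot 1$, so if $s_1 + t_1 = s_2 + t_2$ then $s_1 - s_2 = t_2 - t_1$ is a scalar multiple of the unit, on which $\phi$ and $\psi$ both act as the identity. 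Passing through the $*$-homomorphism $\gamma$ built above bypasses this check entirely, which is why the free product is a convenient ambient C*-algebra for realizing the coproduct.
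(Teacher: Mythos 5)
Your argument is correct and is essentially the paper's own proof: the paper also realizes the coproduct by extending $\phi$ and $\psi$ to unital $*$-homomorphisms on the universal C*-algebras via the universal property of $C^*_u(-)$, combining them through the universal property of the unital free product, and restricting the resulting $*$-homomorphism to $\{s+t\}$. The only cosmetic difference is that you pass through the injective envelope $I(\cl R)$ where the paper embeds $\cl R$ into some $B(H)$; both serve the same purpose of providing a C*-algebra codomain.
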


\begin{proof}
Suppose $\phi: \cl S \rightarrow \cl R$ and $\psi: \cl T \rightarrow \cl R$
are ucp maps, where $\cl R$ is any operator system. Let $\cl R \subset B(H)$. Both $\phi$
and $\psi$ extends to unital $*$-homomorphisms $\pi:C^*_u(\cl S) \rightarrow B(H)$
and $\rho:  C^*_u(\cl T) \rightarrow B(H)$, respectively. Let $\gamma$ be the unital
$*$-homomorphism from  $C^*_u(\cl S) *_1C^*_u(\cl T) $ into $B(H)$ extending $\pi$ and $\rho$.
Now by restricting $\gamma$ the operator subsystem  $\{s+t : s \in \cl S, \; t \in \cl T   \} $ we obtain a ucp map. Note that
the image of the restricted $\gamma$ still lies in $\cl R$.
\end{proof}

A more general form of this will be more useful. We will need the following:

\begin{lemma}
Let $\cl A$ and $\cl B$ be unital C*-algebras such that they both have a one dimensional representation,
that is, there are unital $*$-homomorphisms $w_1 :\cl A \rightarrow \mathbb{C}$
and $w_2 : \cl B \rightarrow \mathbb{C}$. If $\phi: \cl A \rightarrow B(H)$ and
$\psi: \cl B \rightarrow B(H)$ are ucp maps then there is a ucp map $\varphi:  \cl A *_1 \cl B \rightarrow B(H)$
such that $\varphi(a) = \phi(a)$ and $\varphi(b) = \psi(b)$ for every $a \in \cl A$ and  $b \in \cl B$.
\end{lemma}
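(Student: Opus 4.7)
The plan is to use Stinespring dilation together with the one-dimensional representations $w_1,w_2$ to produce unital $*$-homomorphisms of $\cl A$ and $\cl B$ on a single common Hilbert space, then invoke the universal property of the unital free product $\cl A *_1 \cl B$, and finally compress back to $H$.

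First I would apply Stinespring's theorem to $\phi$ and $\psi$ separately. This yields Hilbert spaces $K_\phi \supset H$ and $K_\psi \supset H$ and unital $*$-homomorphisms $\pi_\phi : \cl A \to B(K_\phi)$, $\pi_\psi : \cl B \to B(K_\psi)$ with $\phi(a) = P_H \pi_\phi(a)\big|_H$ and $\psi(b) = P_H \pi_\psi(b)\big|_H$. The obstacle is that these dilations live on different Hilbert spaces, so they cannot yet be combined via the universal property of the free product; this is precisely where the one-dimensional representations $w_1,w_2$ come in.

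Next I would set $K = K_\phi \oplus (K_\psi \ominus H)$. Identifying $H$ with its image in $K_\phi$ places $H$ as a subspace of $K$, and the natural isometric embedding $K_\psi = H \oplus (K_\psi \ominus H) \hookrightarrow K$ also makes $K_\psi$ a subspace of $K$. Define
$$
\tilde\pi_\phi(a) = \pi_\phi(a) \oplus w_1(a) I_{K_\psi \ominus H}, \qquad a \in \cl A,
$$
with respect to the decomposition $K = K_\phi \oplus (K_\psi \ominus H)$, and
$$
\tilde\pi_\psi(b) = w_2(b) I_{K_\phi \ominus H} \oplus \pi_\psi(b), \qquad b \in \cl B,
$$
with respect to the decomposition $K = (K_\phi \ominus H) \oplus K_\psi$. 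Since $w_1$ and $w_2$ are unital $*$-homomorphisms, both $\tilde\pi_\phi$ and $\tilde\pi_\psi$ are unital $*$-homomorphisms into $B(K)$.

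Finally, by the universal property of the unital free product, there is a unique unital $*$-homomorphism $\gamma : \cl A *_1 \cl B \to B(K)$ extending $\tilde\pi_\phi$ and $\tilde\pi_\psi$. Define $\varphi : \cl A *_1 \cl B \to B(H)$ by $\varphi(x) = P_H \gamma(x)\big|_H$; compression of a unital $*$-homomorphism is ucp. To verify the extension property, note that for $a \in \cl A$ and $h \in H \subset K_\phi$, $\tilde\pi_\phi(a)h = \pi_\phi(a)h$, so $\varphi(a) = P_H \pi_\phi(a)\big|_H = \phi(a)$; and for $b \in \cl B$ with $h \in H \subset K_\psi$, $\tilde\pi_\psi(b)h = \pi_\psi(b)h$, so $\varphi(b) = P_H \pi_\psi(b)\big|_H = \psi(b)$, as required.
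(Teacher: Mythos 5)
Your proposal is correct and is essentially identical to the paper's argument: both dilate $\phi$ and $\psi$ via Stinespring, pad each dilation to the common space $H \oplus (K_\phi \ominus H) \oplus (K_\psi \ominus H)$ by letting it act as $w_1$ (resp.\ $w_2$) on the missing summand, invoke the universal property of $\cl A *_1 \cl B$, and compress to $H$.
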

\begin{proof}
By using the Stinespring representation theorem we can find a Hilbert space $H_1$ and a unital $*$-homomorphism
$\pi: \cl A \rightarrow B(H \oplus H_1)$ such that $\phi = V^* \pi(\cdot) V$ where $V^* = (1\;0)$. Similarly let 
$(\rho, H\oplus H_2,W)$ be the Stinespring representation of $\psi$. Set $K = H \oplus H_1 \oplus H_2$. Let
$\tilde{\pi} = \pi \oplus w_1(\cdot) I_2$ and similarly let $\tilde{\rho} = \rho \oplus  w_2(\cdot) I_1$. More precisely,
if
$$
\pi(a) =
\left(
\begin{array}{cc}
\phi(a) & x \\
y & z
\end{array}
\right)
\mbox{ and }
\rho(b) =
\left(
\begin{array}{cc}
\psi(a) & X \\
Y & Z
\end{array}
\right)
$$
for some $a \in \cl A$ and $b\in \cl B$ written in the matrix from w.r.t.\! $H\oplus H_1$ and $H\oplus H_2$, respectively, then
$$
\tilde{\pi}(a) =
\left(
\begin{array}{ccc}
 \phi(a) & x & 0\\
y & z & 0 \\
0 &0 &w_1(a)I_2
\end{array}
\right)
\mbox{ and }
\tilde{\rho}(b) =
\left(
\begin{array}{ccc}
 \phi(a) & 0 & X\\
0 & w_2(b)I_1 & 0 \\
Y &0 & Z
\end{array}
\right).
$$
Clearly $\tilde{\pi}$ and $\tilde{\rho}$ are still unital $*$-homomorphisms such that $\phi  = U^* \tilde{\pi}(\cdot) U$ and
$\psi  = U^* \tilde{\rho}(\cdot) U$ where $U^* = (1\;0\;0)$. Let $\gamma: \cl A *_1 \cl B \rightarrow B(K)$ be the
unital $*$-homomorphism extending $\tilde{\pi}$ and $\tilde{\rho}$. Clearly $\varphi = U^* \gamma(\cdot) U$ is a
ucp map from $\cl A *_1 \cl B$ into $B(H)$ with desired properties.
\end{proof}

Following \cite{kptt2}, we will say that an operator subsystem $\cl S$ of a unital C*-algebra $\cl A$ \textit{contain enough unitaries} if
there is a collection of unitaries that belongs to $\cl S$ and generates $\cl A$ as a C*-algebra, that is,
$\cl A$ is the smallest C*-algebra that contains these unitaries.

\begin{proposition}
Let $\cl A$ and $\cl B$
be unital C*-algebras such that they both have a one dimensional representation.
Let $\cl S \subset \cl A$
and $\cl T \subset \cl B$ be operator subsystems. Then we have
$$
\cl S \oplus_1 \cl T = \{ s+t: s \in \cl S, \; t \in \cl T   \} \subset \cl A *_1 \cl B.
$$
Moreover, if $\cl S$ and $\cl T$ contains enough unitaries in $\cl A$ and $\cl B$, resp., then 
$\{ s+t: s \in \cl S, \; t \in \cl T   \} $ contains enough unitaries in $\cl A *_1 \cl B$.
\end{proposition}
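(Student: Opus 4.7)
The plan is to verify the universal property of the coproduct directly, using the preceding lemma as the main engine. Let $\cl U = \{s+t : s \in \cl S, \; t \in \cl T\} \subset \cl A *_1 \cl B$. Since $\cl U$ contains the common unit, is closed under the involution, and is spanned by the self-adjoint parts of $\cl S$ and $\cl T$, it is an operator subsystem of $\cl A *_1 \cl B$. The canonical inclusions $i : \cl S \hookrightarrow \cl U$ and $j : \cl T \hookrightarrow \cl U$ are unital complete order embeddings because they factor through the C*-algebraic inclusions $\cl S \hookrightarrow \cl A \hookrightarrow \cl A *_1 \cl B$ and $\cl T \hookrightarrow \cl B \hookrightarrow \cl A *_1 \cl B$, each of which is faithful.

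Next I would verify the universal property. Suppose $\phi : \cl S \rightarrow \cl R$ and $\psi : \cl T \rightarrow \cl R$ are ucp with $\cl R \subset B(H)$. By Arveson's extension theorem, $\phi$ and $\psi$ extend to ucp maps $\tilde\phi : \cl A \rightarrow B(H)$ and $\tilde\psi : \cl B \rightarrow B(H)$. Since $\cl A$ and $\cl B$ both admit one-dimensional representations, the previous lemma produces a ucp map $\gamma : \cl A *_1 \cl B \rightarrow B(H)$ such that $\gamma|_{\cl A} = \tilde\phi$ and $\gamma|_{\cl B} = \tilde\psi$. Restricting $\gamma$ to $\cl U$ yields a ucp map $\varphi : \cl U \rightarrow B(H)$ with $\varphi(s+t) = \phi(s) + \psi(t) \in \cl R$, so in fact $\varphi$ maps into $\cl R$. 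Uniqueness of $\varphi$ is automatic: any ucp map on $\cl U$ agreeing with $\phi$ on $i(\cl S)$ and with $\psi$ on $j(\cl T)$ is determined on the spanning set $i(\cl S) + j(\cl T) = \cl U$. By the uniqueness (up to unital complete order isomorphism) of the coproduct, this gives $\cl S \oplus_1 \cl T = \cl U$.

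For the ``moreover'' clause, suppose $\{u_\alpha\} \subset \cl S$ and $\{v_\beta\} \subset \cl T$ are unitaries generating $\cl A$ and $\cl B$ respectively as C*-algebras. All of these unitaries lie in $\cl U$. The C*-algebra they generate inside $\cl A *_1 \cl B$ contains both $\cl A$ and $\cl B$ (by hypothesis), hence by the universal property of the unital free product it contains the image of $\cl A *_1 \cl B$, which is all of $\cl A *_1 \cl B$. Thus $\cl U$ contains enough unitaries.

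The main obstacle is precisely the point at which the one-dimensional representation hypothesis is used: without it there is no reason that arbitrary ucp maps $\tilde\phi$ on $\cl A$ and $\tilde\psi$ on $\cl B$ assemble into a ucp map on the free product $\cl A *_1 \cl B$, since the free product a priori only has the universal property for $*$-homomorphisms. The previous lemma circumvents this by adding an orthogonal scalar summand built from the one-dimensional representations to each Stinespring dilation, and the present proposition reduces to that construction via Arveson extension. Everything else---the embedding of $\cl S$ and $\cl T$, uniqueness of $\varphi$, and the statement about enough unitaries---is essentially formal once the lemma is available.
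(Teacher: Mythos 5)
Your proof is correct and follows essentially the same route as the paper's: extend $\phi$ and $\psi$ by Arveson's theorem, assemble a ucp map on $\cl A *_1 \cl B$ via the preceding lemma (which is exactly where the one-dimensional representation hypothesis enters), restrict to $\{s+t\}$, and observe that the generating unitaries of $\cl A$ and $\cl B$ already lie in this operator subsystem. No gaps; the extra remarks on well-definedness and uniqueness are consistent with what the paper leaves implicit.
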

\begin{proof}
We simply show that the operator system $\{ s + t: s \in \cl S, \; t \in \cl T   \} \subset \cl A *_1 \cl B$ satisfies the universal property
of the coproduct of $\cl S$ and $\cl T$. Let $\phi: \cl S \rightarrow \cl R$ and $\psi: \cl T \rightarrow \cl R$
be two ucp maps, where $\cl R$ is any operator system. Let $\cl R \subset B(H)$. By using Arveson's extension
theorem, let $\tilde{\phi}: \cl A \rightarrow B(H)$ and 
$\tilde{\psi}: \cl B \rightarrow B(H)$ be the ucp extensions of $\phi$ and $\psi$. By the above lemma there is a ucp map
 $\varphi:  \cl A *_1 \cl B \rightarrow B(H)$
such that $\varphi(a) = \tilde{\phi}(a)$ and $\varphi(b) =\tilde{\phi}(b)$ for every $a \in \cl A$ and  $b \in \cl B$. Now the restriction
of the this map on $\{ s + t: s \in \cl S, \; t \in \cl T   \}$ is the desired extension of $\phi$ and $\psi$. To see the final part
note  that $\cl A \subset \cl A*_1 \cl B$ as a C*-subalgebra. So unitaries in $\cl S$ already generates $\cl A \subset \cl A*_1 \cl B$.
Likewise, unitaries in $\cl T$ generates $\cl B\subset \cl A*_1 \cl B$. Since $\cl A*_1 \cl B$ is the smallest C*-algebra
that contains $\cl A$ and $\cl B$ the result follows.
\end{proof}

\begin{corollary}\label{cor AoplusB}
Let $\cl A$ and $\cl B$ be two unital C*-algebras. Suppose both $\cl A$ and $\cl B$ have one dimensional
representations. Then
$$
\cl A \oplus_1 \cl B = \{a+b: a\in \cl A,\; b\in \cl B \} \subset \cl A*_1 \cl B.
$$
Moreover, the operator subsystem $\{a+b: a\in \cl A,\; b\in \cl B \} $ contains enough unitaries in $ \cl A*_1 \cl B$.
\end{corollary}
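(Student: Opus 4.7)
The plan is to obtain this result as an immediate specialization of the preceding Proposition, taking $\cl S = \cl A$ and $\cl T = \cl B$, where each C*-algebra is viewed as an operator subsystem of itself. Both hypotheses of that Proposition are directly satisfied by assumption: $\cl A$ and $\cl B$ are unital C*-algebras each admitting a one-dimensional representation, and trivially $\cl A \subset \cl A$ and $\cl B \subset \cl B$ are operator subsystems. The Proposition then yields the identification
$$
\cl A \oplus_1 \cl B = \{a+b : a \in \cl A,\; b \in \cl B\} \subset \cl A *_1 \cl B,
$$
which is the first assertion.

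For the second assertion I would invoke the ``moreover'' clause of the preceding Proposition. This requires checking that $\cl A$, viewed as an operator subsystem of itself, contains enough unitaries, and similarly for $\cl B$. This is a standard fact about unital C*-algebras: the unitary group of $\cl A$ linearly spans a dense subspace of $\cl A$ (for instance, any self-adjoint element $a$ with $\|a\|\leq 1$ can be written as $(u + u^*)/2$ with $u = a + i\sqrt{1-a^2}$ unitary, and every element decomposes into a sum of two self-adjoints), so the unitaries in $\cl A$ certainly generate $\cl A$ as a C*-algebra. The same applies to $\cl B$. The ``moreover'' part of the preceding Proposition now gives that $\{a+b : a \in \cl A,\; b \in \cl B\}$ contains enough unitaries in $\cl A *_1 \cl B$.

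There is no real obstacle in this argument; the work has already been done in the preceding Proposition. The only point that needs a brief remark is the standard observation that the unitaries of a unital C*-algebra generate it as a C*-algebra, which is used to verify the hypothesis of the ``moreover'' clause in the specialization $\cl S = \cl A$, $\cl T = \cl B$.
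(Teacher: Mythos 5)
Your proof is correct and is essentially the intended argument: the paper presents this corollary as an immediate specialization of the preceding proposition with $\cl S = \cl A$ and $\cl T = \cl B$, exactly as you do. Your verification that the unitaries of a unital C*-algebra span it (via $a = (u+u^*)/2$ with $u = a + i\sqrt{1-a^2}$) correctly supplies the hypothesis for the ``moreover'' clause.
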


\begin{remark}
For any operator system $\cl S$, $C^*_u(\cl S)$ possesses a one dimensional representation.
More precisely, if $f: \cl S \rightarrow \mathbb{C}$ is a state then it extends to unital $*$-homomorphism
on  $C^*_u(\cl S)$. Thus, our first construction of coproduct is a special case of the above proposition.
\end{remark}

\begin{remark}
If $G$ is a discrete group then the full C*-algebra $C^*(G)$ has a one dimensional representation.
In fact, $\rho: G \rightarrow \mathbb{C}$ given by  $\rho(g) = 1$ is a unitary representation so the universal
property of $C^*(G)$ ensures that $\rho$ extends to a unital $*$-homomorphism on $C^*(G)$.
\end{remark}

Now, we will discuss a more concrete construction given in \cite{Kav2}. This also justifies the notation
of the coproduct.
 If $\cl S$ and $\cl T$ be operator systems
then $span\{(e,-e)\}$ is a one dimensional null subspace of $\cl S \oplus \cl T$. We will show that:

\begin{proposition}\label{pro SoplusT}
$ \cl S \oplus_1 \cl T = (\cl S \oplus \cl T)/span\{(e,-e)\}$.
\end{proposition}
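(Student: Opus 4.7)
The plan is to verify the universal property of the coproduct directly for the operator system $\cl U := (\cl S \oplus \cl T)/J$, where $J := \mathrm{span}\{(e_{\cl S}, -e_{\cl T})\}$ and $\cl S \oplus \cl T$ carries its coordinate-wise operator system structure (positive cone equal to the product of the component cones, unit $(e_{\cl S}, e_{\cl T})$). First I would observe that $J$ is a one-dimensional null subspace: it is self-adjoint, and any nonzero multiple of $(e_{\cl S}, -e_{\cl T})$ has one strictly positive and one strictly negative coordinate, hence fails to be positive. By the earlier remark on null subspaces, $J$ is a completely proximinal kernel and $\cl U$ is a well-defined operator system with unit $\dot{e} := \overline{(e_{\cl S}, e_{\cl T})}$. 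Note that in $\cl U$ one has $\overline{(e_{\cl S}, 0)} = \overline{(0, e_{\cl T})} = \tfrac{1}{2}\dot{e}$, so I would take as candidate embeddings the \emph{unital} maps $i(s) := 2\,\overline{(s, 0)}$ and $j(t) := 2\,\overline{(0, t)}$.

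For the universal property, given ucp maps $\phi: \cl S \to \cl R$ and $\psi: \cl T \to \cl R$, consider $\Phi: \cl S \oplus \cl T \to \cl R$ defined by $\Phi(s, t) = \tfrac{1}{2}(\phi(s) + \psi(t))$. This is a convex combination of the ucp maps $\phi \circ \pi_{\cl S}$ and $\psi \circ \pi_{\cl T}$ (the coordinate projections being ucp by the chosen direct-sum structure), hence itself ucp; moreover $\Phi(e_{\cl S}, -e_{\cl T}) = 0$, so by the quotient compatibility of cp maps recalled in the preliminaries, $\Phi$ descends to a ucp map $\varphi: \cl U \to \cl R$. The scalings cancel exactly: $\varphi(i(s)) = 2\,\Phi(s, 0) = \phi(s)$ and $\varphi(j(t)) = \psi(t)$. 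Uniqueness follows because $\overline{(s, t)} = \tfrac{1}{2}\,i(s) + \tfrac{1}{2}\,j(t)$, so that $i(\cl S) + j(\cl T)$ spans $\cl U$.

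It remains to verify that $i$ (symmetrically $j$) is a complete order embedding rather than just a unital cp map. One direction is immediate from the product-cone structure of the direct sum. For the reverse, suppose $(\overline{(s_{ij}, 0)}) \geq 0$ in $M_n(\cl U)$. Unwinding the Archimedeanization step of the quotient cone produces, for every $\epsilon > 0$, a self-adjoint scalar matrix $(\lambda_{ij})$ such that $(s_{ij} + (\lambda_{ij} + \epsilon)e_{\cl S},\ (\epsilon - \lambda_{ij})e_{\cl T})$ is positive in $M_n(\cl S \oplus \cl T)$; the second coordinate forces $(\lambda_{ij}) \leq \epsilon I_n$, and the first then yields $(s_{ij}) + 2\epsilon\,(I_n \otimes e_{\cl S}) \geq 0$, whence $(s_{ij}) \geq 0$ in $M_n(\cl S)$ by Archimedeanness.

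The main obstacle is really the bookkeeping of the factor of $2$ forced by $\overline{(e_{\cl S}, 0)} = \tfrac{1}{2}\dot{e}$; everything else is formal. A cleaner alternative, avoiding the scaling entirely, would be to invoke the preceding proposition, which realizes $\cl S \oplus_1 \cl T$ as $\{s + t\} \subset C^*_u(\cl S) *_1 C^*_u(\cl T)$, and then verify that the natural cp map $(s, t) \mapsto s + t$ descends from $\cl S \oplus \cl T$ to a complete order isomorphism $\cl U \to \cl S \oplus_1 \cl T$, using dimension counting together with the universal properties of both constructions.
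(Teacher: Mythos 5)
Your proposal is correct and follows essentially the same route as the paper: the same embeddings $i(s)=2\overline{(s,0)}$, $j(t)=2\overline{(0,t)}$, and the same descent of $\tfrac12(\phi+\psi)$ through the quotient for the universal property. The only (inessential) difference is in checking that $i$ is a complete order embedding: the paper invokes complete proximinality of the null subspace $J$ to get an exact positive representative $(2s_{ij}+\alpha_{ij}e,-\alpha_{ij}e)$, whereas you unwind the Archimedeanization with $\epsilon$-perturbations; both hinge on the same observation that the second coordinate controls the sign of the scalar matrix.
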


\begin{proof}
Consider $i: \cl S \rightarrow (\cl S \oplus \cl T)/span\{(e,-e)\}$ given by $s \mapsto (2s, 0)+J$.
We claim that $i$ is a unital complete order embedding. First note that
$$
i(e) = (2e,0)+J = (2e,0)+(-e,e) + J = (e,e)+J
$$
so $i$ is unital. It is also easy to see that $i$ is a cp map as it can be written as the composition of two cp maps:
$\cl S \rightarrow \cl S \oplus \cl T$, $s\mapsto (2s,0)$
and the quotient map. To see that it is a complete order embedding let $((2s_{ij},0)+J)$ be positive
in $M_k((\cl S \oplus \cl T)/span\{(e,-e)\})$. We will show that $(s_{ij})$ is in $M_k(\cl S)^+$. Since
quotient by null subspaces are completely proximinal $((2s_{ij},0)+J)$ has positive
representative in $\cl S \oplus T$, say
$$
(2s_{ij},0) + (\alpha_{ij}e,-\alpha_{ij}e)  = (2s_{ij} + \alpha_{ij}e,- \alpha_{ij}e).
$$
This clearly forces $( - \alpha_{ij}e)$ to be positive in $M_k(\cl T)$, equivalently,
we have that $( - \alpha_{ij}) \in M_k^+$. Finally,
Since $(2s_{ij} + \alpha_{ij}e)$ and $(- \alpha_{ij}e)$ are two positive elements of $M_k(\cl S)$
it follows that their addition is also positive. This proves that $i$ is a unital complete order embedding.

\noindent Likewise $j: \cl T \rightarrow (\cl S \oplus \cl T)/span\{(e,-e)\}$; $t \mapsto (0, 2t)+J$ is a unital complete
order embedding. It is also well establish that

\noindent Finally suppose $\phi: \cl S \rightarrow \cl R$ and $\psi: \cl T \rightarrow \cl R$
are two ucp maps, where $\cl R$ is an operator system. Define $\varphi :  (\cl S \oplus \cl T)/span\{(e,-e)\} \rightarrow \cl R$
by $\varphi((s,t) + J) = \phi(s)/2 + \psi(t)/2$. Note that $\varphi$ is well defined as both $\phi$ and $\psi$ are unital.
Moreover $\varphi(i(s)) = \varphi ((2s,0)+J) = \phi(s) $. Likewise $\varphi(j(t)) = \psi(t)$. Thus we only need to
show that $\varphi$ is a cp map. Consider $\gamma: \cl S \oplus \cl T \rightarrow \cl R$ given by $\gamma((s,t)) = \phi(s)/2 + \psi(t)/2$.
Clearly $\gamma$ is a ucp map such that $(e,-e)$ belongs to its kernel. Consequently the induced map $\bar{\gamma}: 
(\cl S \oplus \cl T)/span\{(e,-e)\} \rightarrow \cl R $ is still a ucp map by the universal property of the operator system quotients.
Clearly $\bar{\gamma}$ coincides with $\varphi$. This finishes the proof. 
\end{proof}

Supposing $G$ and $H$ discrete groups then we have the C*-algebraic isomorphism
$$
C^*(G) *_1 C^*(H) \cong C^*(G * H)
$$
in a natural way. We refer the reader to \cite[pg. 149]{pi} on a discussion on this topic. 
Letting $\bb Z_k$ be the cyclic group of order $k$, it is well known  that the group C*-algebra $C^*(\bb Z_k)$
can be identified with $\bb C^k$ (see \cite[pg. 60]{BEF}). Consequently we have that
\begin{eqnarray}\label{rrrr}
 \bb C^k *_1 \bb C^m \cong C^*(\bb Z_k) *_1 C^*(\bb Z_m)  \cong C^*(\bb Z_k * \bb Z_m).
\end{eqnarray}
Note that the group $\bb Z_k * \bb Z_m$ can be given as $\bb Z_k * \bb Z_m = \langle a,b : a^k = b^m = e \rangle$. By identifying $\bb Z_k * \bb Z_m$
with its canonical image in $C^*(\bb Z_k * \bb Z_m)$ we have that
\begin{eqnarray}\label{r}
\cl S = span\{e,a,a^2,...,a^{k-1},b,b^2,...,b^{m-1}\}  \subset C^*(\bb Z_k * \bb Z_m)
\end{eqnarray}
is closed under the involution and consequently an operator subsystem. Under the natural identification between the C*-algebras
given in \ref{rrrr} the following operator subsystems
$$
\{x+y: \; x\in \bb C^k,\; x\in \bb C^k \} \subset \bb C^k *_1 \bb C^m \mbox{ and } \cl S \subset C^*(\bb Z_k * \bb Z_m)
$$
are invariant and consequently they are unitally completely order isomorphic. This allows us to draw 
the following conclusion which is the main purpose of this section.  

\begin{theorem}\label{thm equiv}
Let $k$ and $m$ be positive integers. The following operator systems are unitally completely order isomorphic:
\begin{enumerate}
 \item $\bb C^{k+m}/span\{(\underbrace{1,...,1}_{k-terms},\underbrace{-1,...,-1}_{m-terms})\}$.

\item $\bb C^k \oplus_1 \bb C^k$.

\item $\{x+y: x \in \bb C^k, y\in \bb C^m\} \subset \bb C^k *_1 \bb C^m $.

\item The operator subsystem $\cl S \subset C^*(\bb Z_k * \bb Z_m)$ given in \ref{r}.
\end{enumerate}
Moreover, each of these operator systems contain enough unitaries in $C^*(\bb Z_k * \bb Z_m)$.
\end{theorem}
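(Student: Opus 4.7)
My plan is to establish the chain $(1) \cong (2) \cong (3) \cong (4)$, leveraging the three preceding results of this section in turn. None of the steps should be hard individually; the real work has already been done above.

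For $(2) \cong (3)$: Since $\bb C^k$ and $\bb C^m$ are commutative unital C*-algebras, they admit (many) one-dimensional representations, e.g.\ evaluation at any coordinate. Hence Corollary \ref{cor AoplusB} applies directly and yields
$$ \bb C^k \oplus_1 \bb C^m = \{x+y : x \in \bb C^k,\ y \in \bb C^m\} \subset \bb C^k *_1 \bb C^m $$
as a unital complete order identification.

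For $(1) \cong (2)$: The direct sum $\bb C^k \oplus \bb C^m$ is just $\bb C^{k+m}$ as an operator system, and under this identification the distinguished element $(e,-e)$ becomes $(1,\ldots,1,-1,\ldots,-1)$ with $k$ ones and $m$ negative ones. Proposition \ref{pro SoplusT} applied with $\cl S = \bb C^k$ and $\cl T = \bb C^m$ then gives (1).

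For $(3) \cong (4)$: This is the content of the natural C*-isomorphism (\ref{rrrr}). Under this isomorphism the canonical copy of $\bb C^k \cong C^*(\bb Z_k)$ inside $\bb C^k *_1 \bb C^m$ is carried onto $span\{e, a, a^2, \ldots, a^{k-1}\}$ inside $C^*(\bb Z_k * \bb Z_m)$, and similarly the $\bb C^m$ copy is carried onto $span\{e, b, b^2, \ldots, b^{m-1}\}$. Consequently the operator subsystem $\{x+y\}$ in (3) is mapped precisely onto $\cl S$ in (4). Since this identification is the restriction of a $*$-isomorphism of C*-algebras, it is automatically a unital complete order isomorphism.

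For the final clause that each of (1)--(4) contains enough unitaries in $C^*(\bb Z_k * \bb Z_m)$: in description (4) the statement is manifest, since $a, a^2, \ldots, a^{k-1}, b, b^2, \ldots, b^{m-1}$ are unitaries in $C^*(\bb Z_k * \bb Z_m)$ lying in $\cl S$, and together with $e$ they generate the full group C*-algebra (because $a$ and $b$ are the canonical generators of $\bb Z_k * \bb Z_m$). The chain of isomorphisms established above transports this property to the other three descriptions. The only real obstacle is bookkeeping, namely verifying that the natural identifications preserve the operator-system structure and that the generating unitaries are tracked correctly across the chain; each step is either a direct application of an earlier result or a restriction of a C*-algebra $*$-isomorphism.
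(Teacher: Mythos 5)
Your proposal is correct and follows essentially the same route as the paper: $(1)\cong(2)$ via Proposition \ref{pro SoplusT}, $(2)\cong(3)$ via Corollary \ref{cor AoplusB}, $(3)\cong(4)$ via the identification (\ref{rrrr}), and the enough-unitaries claim read off from description (4). The extra detail you supply (coordinate evaluations as one-dimensional representations, tracking the generators under the $*$-isomorphism) is exactly the bookkeeping the paper leaves implicit.
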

\begin{proof}
(1)$=$(2) can be seen by Proposition \ref{pro SoplusT} and (2)$=$(3)
follows from Corollary \ref{cor AoplusB}. By the above discussion we have (3)$=$(4).
Finally, clearly the operator system in (4)  contains enough unitaries in $C^*(\bb Z_k * \bb Z_m)$.
\end{proof}

\begin{remark}\label{rem C5/J}
In the next section we are particularly interested in the four dimensional operator system $\bb C^5/span\{(1,1,-1,-1,-1)\}$
and exhibit several universal properties of this operator system. By the using the above theorem for $k = 2$, $m = 3$ we see that
\begin{enumerate}
 \item $\bb C^5/span\{(1,1,-1,-1,-1)\}$,
 \item $\bb C^2 \oplus_1 \bb C^3$,
 \item $\{x+y: x \in \bb C^2, y\in \bb C^3\} \subset \bb C^2 *_1 \bb C^3 $,
 \item $span\{e,a,b,b^*\} \subset C^*(\bb Z_2 * \bb Z_3)$ where $\bb Z_2 * \bb Z_3$ is given by $\langle a,b : a^2 = b^3 = e\rangle$
\end{enumerate}
are all unitally and completely order isomorphic. In \cite{Kav}, the operator system in (4) is given as an example of a four
dimensional operator system which is not exact. We will recover this in later sections.
\end{remark}

\section{Weak Expectation Property (WEP)}

In this section we exhibit a new WEP criteria and express the Kirchberg Conjecture in terms of a problem
about four dimensional operator system problem. As we pointed out in the introductory section WEP is
one of the fundamental nuclearity property ensued by Lance \cite{La}. It characterizes the \textit{semi-nuclear}
C*-algebras  in the sense that the maximal tensor product which is projective behaves injectively for
this class of C*-algebras. More precisely $\cl A$ has WEP if and only if for every C*-algebras  
$\cl B $ and $\cl C$ with $\cl A \subset \cl B$ one has
$$
\cl A \otimes_{max} \cl C \subset \cl B \otimes_{max} \cl C.
$$ 

Recall from last section that an operator subsystem $\cl S$ of a C*-algebra $\cl A$ is said to \textit{contain enough
unitaries} if there is a collection of unitaries in $\cl S$ that generates $\cl A$ as a C*-algebra. Such an
operator system contains great deal of information about the nuclearity properties of $\cl A$ which we shall state below.
In many instances this allows us to retrieve the properties of a C*-algebra by using a very low dimensional operator system.
The following proposition is the key point and it was inspired by a work of Pisier \cite{pi}. The proof can be found in \cite{kptt2}.
\begin{proposition}\label{pro enough}
Suppose $\cl S \subset \cl A$ and $\cl T \subset \cl B$ contain enough unitaries. Then
$$
\cl S \otimes_{min} \cl T = \cl S \otimes_{c} \cl T \;\; \Longrightarrow \;\; 
\cl A \otimes_{min} \cl B = \cl A \otimes_{max} \cl B.
$$
\end{proposition}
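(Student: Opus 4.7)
The plan is to show that every pair of unital $*$-representations $\pi:\cl A \to B(H)$ and $\rho:\cl B \to B(H)$ with commuting ranges factors through $\cl A \otimes_{\min} \cl B$ as a $*$-homomorphism. Since the C*-algebraic $\max$ tensor product is the universal object for such commuting pairs, and any faithful representation of $\cl A \otimes_{\max} \cl B$ arises in this way, this will force $\cl A \otimes_{\min} \cl B = \cl A \otimes_{\max} \cl B$.

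Given such a pair, set $\phi = \pi|_{\cl S}$ and $\psi = \rho|_{\cl T}$; these are ucp maps into $B(H)$ with commuting ranges. By the definition of $\otimes_c$ recalled in Subsection \ref{subsec c}, the product $\phi\cdot\psi:s\otimes t \mapsto \phi(s)\psi(t)$ is ucp on $\cl S\otimes_c\cl T$. The hypothesis $\cl S\otimes_{\min}\cl T = \cl S\otimes_c\cl T$ means the identity is a complete order isomorphism, so $\phi\cdot\psi$ is equally ucp with respect to the minimal matricial cones. By injectivity of $\min$, the inclusions $\cl S\subset\cl A$ and $\cl T\subset\cl B$ give a complete order embedding $\cl S\otimes_{\min}\cl T\subset\cl A\otimes_{\min}\cl B$, and Arveson's extension theorem then produces a ucp map $\Phi:\cl A\otimes_{\min}\cl B\to B(H)$ extending $\phi\cdot\psi$.

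The heart of the argument is the multiplicative-domain step. For each unitary $u$ in the generating collection inside $\cl S$, the element $u\otimes 1\in\cl A\otimes_{\min}\cl B$ is a unitary, and $\Phi(u\otimes 1)=\pi(u)$ is also a unitary; by Choi's theorem on multiplicative domains, $u\otimes 1$ therefore lies in the multiplicative domain $\mathcal M(\Phi)$. Since $\mathcal M(\Phi)$ is a norm-closed unital $*$-subalgebra of $\cl A\otimes_{\min}\cl B$ and the chosen unitaries generate $\cl A$ as a C*-algebra, we conclude $\cl A\otimes 1\subset\mathcal M(\Phi)$; symmetrically $1\otimes\cl B\subset\mathcal M(\Phi)$, so $\mathcal M(\Phi)=\cl A\otimes_{\min}\cl B$. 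Consequently $\Phi(a\otimes b)=\Phi(a\otimes 1)\,\Phi(1\otimes b)=\pi(a)\rho(b)$, so $\Phi$ is precisely the $*$-homomorphism induced by the commuting pair, and the argument closes upon choosing $\pi,\rho$ from a faithful commuting representation of $\cl A\otimes_{\max}\cl B$. I expect the main subtlety to be exactly this multiplicative-domain observation: once one invokes Choi's theorem and carefully tracks that the generating unitaries in $\cl S$ really produce all of $\cl A\otimes 1$ within $\mathcal M(\Phi)$ (a point that uses both norm-closedness of $\mathcal M(\Phi)$ and the ``enough unitaries'' hypothesis), the remaining ingredients, namely injectivity of $\min$, Arveson's theorem, and the universal property of $\otimes_{\max}$ for commuting pairs, are standard.
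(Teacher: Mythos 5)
Your proof is correct and follows essentially the same route as the paper's: the paper defers the proof of Proposition \ref{pro enough} to \cite{kptt2}, but the argument there (and the one the author reprises inside the proof of Theorem \ref{thm DCEPCri}) is exactly your combination of injectivity of $\min$, Arveson extension of $\phi\cdot\psi$, and Choi's multiplicative-domain theorem applied to the generating unitaries $u\otimes 1$ and $1\otimes v$ to force $\Phi(a\otimes b)=\pi(a)\rho(b)$. The only point worth making explicit is that $\Phi(a\otimes 1)=\pi(a)$ for \emph{all} $a\in\cl A$ (not just the generators) because $\Phi$ restricted to its multiplicative domain is a $*$-homomorphism agreeing with $\pi(\cdot)\otimes 1\mapsto\pi(\cdot)$ on a generating set of unitaries; you implicitly use this when concluding $\Phi$ is the $*$-homomorphism induced by the commuting pair.
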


\begin{proposition}
Let $\cl S \subset \cl A$ contain enough unitaries. Then:
\begin{enumerate}
\item $\cl S$ is exact $\Longrightarrow $ $\cl A$ is exact.
\item $\cl S$ has DCEP $\Longrightarrow $ $\cl A$ has WEP.
\item $\cl S$ is C*-nuclear $\Longrightarrow $ $\cl A$ nuclear.
\item $\cl S$ has osLLP $\Longrightarrow $ $\cl A$ has LLP.
\end{enumerate}
\end{proposition}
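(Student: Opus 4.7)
The plan is to use the tensor-product characterizations from Section \ref{sec nuclearity} together with Proposition \ref{pro enough}, taking the right-hand tensorant $\cl T$ to be a unital C*-algebra $\cl B$ itself (which trivially contains enough unitaries in itself). Each of (2), (3), (4) then follows by the same recipe: verify the relevant equality of tensor products at the level of $\cl S$, and push it up to $\cl A$ via Proposition \ref{pro enough}.

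Spelling this out: for (3), C*-nuclearity of $\cl S$ is exactly $\cl S \otimes_{min} \cl B = \cl S \otimes_{max} \cl B$ for every unital C*-algebra $\cl B$, so Proposition \ref{pro enough} (combined with $\otimes_c = \otimes_{max}$ when one tensorant is a C*-algebra) yields $\cl A \otimes_{min} \cl B = \cl A \otimes_{max} \cl B$ for every $\cl B$, i.e., $\cl A$ is nuclear. For (4), osLLP of $\cl S$ is equivalent by Theorem \ref{thm LLPchar} to $\cl S \otimes_{min} B(H) = \cl S \otimes_{max} B(H)$ with $H = \ell^2(\bb N)$, and Proposition \ref{pro enough} upgrades this to the analogous equality for $\cl A$, which is the LLP criterion for a C*-algebra. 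For (2), DCEP of $\cl S$ is characterized by $\cl S \otimes_{min} C^*(\bb F_\infty) = \cl S \otimes_{max} C^*(\bb F_\infty)$, and Proposition \ref{pro enough} with $\cl T = \cl B = C^*(\bb F_\infty)$ delivers Kirchberg's tensorial WEP criterion for $\cl A$.

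Part (1) is the genuinely delicate case, because exactness corresponds to (min, el)-nuclearity (Theorem \ref{thm exact=(min,el)}) whereas Proposition \ref{pro enough} operates at the (min, c) level. The plan is to replay the proof of Proposition \ref{pro enough} but for the (min, el) pair: the key input there is that any ucp map on $\cl S$ sending the designated generating unitaries to unitaries in the target is multiplicative on the C*-subalgebra they generate (by Choi's multiplicative-domain theorem applied to its Stinespring dilation), and hence extends to a $*$-homomorphism on $\cl A$. The main obstacle is that the definition of $\otimes_{el}$ passes through the injective envelope $I(\cl S)$, which is a priori not controlled by the unitaries living in $\cl S$; to sidestep this one fixes a concrete embedding $\cl S \subset B(H)$ and uses $\cl S \otimes_{el} \cl T \subset B(H) \otimes_{max} \cl T$ (left-injectivity of $\otimes_{el}$), putting us back in a setting where the unitary-extension mechanism of Proposition \ref{pro enough} applies to yield $\cl A \otimes_{min} \cl T = \cl A \otimes_{el} \cl T$ for every $\cl T$, i.e., exactness of $\cl A$.
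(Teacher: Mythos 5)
Your treatment of parts (2), (3) and (4) is correct, and for (4) it is literally the paper's argument: the paper writes out only (4), applying Proposition \ref{pro enough} with $\cl T=\cl B=B(H)$ together with the fact that $\otimes_c=\otimes_{max}$ against a C*-algebra, and refers to \cite{kptt2} for (1)--(3). Your reductions of (2) to Kirchberg's $C^*(\bb F_\infty)$ criterion and of (3) to arbitrary C*-tensorands are the natural analogues and go through as stated, since a unital C*-algebra contains enough unitaries in itself.

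Part (1) is where the proposal has a genuine gap. You correctly note that the injective envelope in the definition of $\otimes_{el}$ can be replaced by any containing $B(H)$ via left injectivity, but the ``unitary-extension mechanism of Proposition \ref{pro enough}'' does not then simply apply: that mechanism (Arveson extension followed by Choi's multiplicative domain theorem) is run inside C*-algebras, whereas in the exactness case the right tensorant $\cl T$ is a general operator system, so neither the domain $\cl A\otimes_{min}\cl T$ nor the codomain $B(H)\otimes_{max}\cl T$ is an algebra. To make sense of ``$u\otimes e$ lies in the multiplicative domain'' you must first extend the map once more to the C*-algebra $\cl A\hat{\otimes}_{min}\cl B'$ for a C*-cover $\cl B'\supseteq\cl T$ (using injectivity of $\otimes_{min}$), and you must realize $B(H)\otimes_{max}\cl T$ inside a C*-algebra in which $u\otimes e$ is genuinely unitary, e.g.\ $B(H)\otimes_{max}C^*_u(\cl T)$ --- and the fact that $B(H)\otimes_{max}\cl T$ embeds there is itself a claim needing justification, since $\otimes_{max}$ is not injective. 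None of this is fatal, but it is exactly the content that distinguishes the $el$ case from the $c$ case, and the sketch does not supply it. A cleaner repair is to test exactness of the C*-algebra $\cl A$ directly in its quotient formulation: for a C*-algebra $\cl B$ and ideal $I$, the inverse of the induced map is defined on $\cl S\hat{\otimes}_{min}(\cl B/I)$ by exactness of $\cl S$ and then Arveson-extended to $\cl A\hat{\otimes}_{min}(\cl B/I)$; there both the domain and the codomain $(\cl A\hat{\otimes}_{min}\cl B)/(\cl A\bar{\otimes} I)$ are C*-algebras, the elements $u\otimes\dot{v}$ (with $u$ a designated unitary of $\cl S$ and $v$ a unitary of $\cl B$) generate the domain and are sent to unitaries, and the multiplicative-domain argument closes without any of the above complications.
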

\begin{proof}
The reader may refer to \cite{kptt2} for the proofs of (1), (2), and (3). We only prove (4). By Theorem \ref{thm LLPchar}, 
the condition on $\cl S$ requires that $\cl S \otimes_{min} B(H) = \cl S \otimes_{max} B(H)$ for every Hilbert space $H$.
So by the above proposition (with $\cl T = B(H) = \cl B$ and
using the fact that $c$ and $max$ coincides when one of the tensorant is a C*-algebra) we get
$\cl A \otimes_{min} B(H) = \cl A \otimes_{max} B(H)$ for every $H$, equivalently,  $\cl A$ has LLP. 
\end{proof}

In \cite{bo}, Boca proves that LLP is preserved under unital free products. Consequently:
\begin{corollary}[Boca]
The group C*-algebra $C^*(\bb Z_k*\bb Z_m)$ has LLP.
\end{corollary}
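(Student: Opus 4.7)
The plan is to combine the identification in Theorem \ref{thm equiv} with the stability of the lifting property under null-subspace quotients (Theorem \ref{thm QouhasLP}) and the ``enough unitaries'' transfer principle (item (4) of the preceding proposition). The point is that $C^*(\bb Z_k * \bb Z_m)$ contains a very low dimensional operator subsystem whose properties already encode LLP of the whole C*-algebra, and that operator system is a quotient of a (nuclear) finite dimensional commutative C*-algebra.

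More precisely, first I would invoke Theorem \ref{thm equiv} to identify the operator system
$$
\cl S \;=\; \bb C^{k+m} \big/ \,\mathrm{span}\{(\underbrace{1,\dots,1}_{k},\underbrace{-1,\dots,-1}_{m})\}
$$
with the operator subsystem $\mathrm{span}\{e,a,\dots,a^{k-1},b,\dots,b^{m-1}\}$ of $C^*(\bb Z_k * \bb Z_m)$, and note that this subsystem contains enough unitaries (namely the canonical images of the generators and their powers). Thus by part (4) of the proposition immediately above, it suffices to show that $\cl S$ has osLLP, equivalently (since $\cl S$ is finite dimensional) the lifting property LP.

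Next, $\bb C^{k+m}$ is a finite dimensional commutative C*-algebra, hence nuclear, hence has osLLP; as a finite dimensional operator system it has the lifting property. The one-dimensional subspace $J = \mathrm{span}\{(1,\dots,1,-1,\dots,-1)\}$ is self-adjoint and spanned by a self-adjoint vector that is neither positive nor negative, so it is a null-subspace of $\bb C^{k+m}$ in the sense of the preliminary section. Theorem \ref{thm QouhasLP} then applies directly: quotients of operator systems with LP by null-subspaces still have LP. Hence $\cl S = \bb C^{k+m}/J$ has LP, i.e.\ osLLP.

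Combining these two steps and the transfer via ``enough unitaries'' yields that $C^*(\bb Z_k * \bb Z_m)$ has LLP. There is really no obstacle here beyond having the correct reformulation in hand: the whole weight of the argument is carried by the identification in Theorem \ref{thm equiv} (which replaces the free-product C*-algebra picture by a concrete four-to-finitely-many dimensional quotient picture) together with the already-established fact that LP is stable under null-subspace quotients. The only point that needs a quick sanity check is that the one-dimensional span is genuinely a null-subspace, which is immediate from its definition.
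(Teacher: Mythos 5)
Your proof is correct, but it is not the route the paper takes for this corollary: the paper's stated proof invokes Boca's theorem that LLP is preserved under unital free products as a black box, combined with the identification $\bb C^k *_1 \bb C^m \cong C^*(\bb Z_k)*_1 C^*(\bb Z_m) \cong C^*(\bb Z_k * \bb Z_m)$ and the fact that the finite dimensional commutative C*-algebras $\bb C^k$, $\bb C^m$ have LLP. Your argument instead stays entirely inside the operator-system machinery developed in the paper: you identify $\bb C^{k+m}/J_{k,m}$ with the operator subsystem $\mathrm{span}\{e,a,\dots,a^{k-1},b,\dots,b^{m-1}\}$ of $C^*(\bb Z_k*\bb Z_m)$ containing enough unitaries (Theorem \ref{thm equiv}), observe that $J_{k,m}$ is a one-dimensional null-subspace so that Theorem \ref{thm QouhasLP} gives the lifting property of the quotient of the nuclear algebra $\bb C^{k+m}$, and then transfer osLLP to LLP of the ambient C*-algebra via part (4) of the ``enough unitaries'' proposition. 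This is in fact precisely the alternate proof the author sketches in the remark immediately following the corollary. What the paper's proof buys is brevity and a direct appeal to a known C*-algebraic result; what your proof buys is self-containedness within the quotient/tensor framework of the paper and an illustration of how a low-dimensional operator system quotient already encodes the LLP of the whole group C*-algebra. All the steps you use are correctly cited and apply as stated, so there is no gap.
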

 \begin{proof}
 This is a consequence of the identification
$$
 \bb C^k *_1 \bb C^m \cong C^*(\bb Z_k) *_1 C^*(\bb Z_m)  \cong C^*(\bb Z_k * \bb Z_m).
$$
Since $\bb C^k$ and $\bb C^m$ has LLP it follows that  $C^*(\bb Z_k*\bb Z_m)$ has LLP.
 \end{proof}

\begin{rem}
This can be alternately proved as follows: Since $\bb C^{k+m} / J$,
where
$$
J = span\{(\underbrace{1,...,1}_{k-terms},\underbrace{-1,...,-1}_{m-terms})\},
$$
can be identified with an operator subsystem of $C^*(\bb Z_k*\bb Z_m)$ which contain enough unitaries, by the above
proposition, it is enough to prove that $\bb C^{k+m} / J$ has the lifting property. But this a simple consequence of 
Theorem \ref{thm QouhasLP}.
\end{rem}

Let $\bb F_{\infty}$ be the free group on the countably infinite number of generators and let
$C^*(\bb F_\infty)$ be the full group C*-algebra of $\bb F_{\infty}$. 
It is well establish that the free group $\bb F_{\infty}$ embeds in $\bb Z_2 * \bb Z_3$ (see \cite[Pg. 24]{PDLH} e.g.). In the following
we identify the groups with their canonical images in the their (full) group C*-algebras. It is essentially \cite[Prop. 8.8]{pi}.

\begin{proposition}
Let $H$ be subgroup of $G$. Then $C^*(H)$ embeds in $C^*(G)$. More precisely, the unitary
representation $\rho: H \rightarrow C^*(G)$ given by  $h\mapsto h$ extends to bijective unital $*$-homo\-morphism $\pi$.
Moreover, this embedding has a ucp inverse.
\end{proposition}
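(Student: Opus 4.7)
The plan is to construct the $*$-homomorphism $\pi$ via the universal property of $C^*(H)$ and then produce a ucp map $E: C^*(G) \to C^*(H)$ satisfying $E \circ \pi = \id_{C^*(H)}$. The existence of such an $E$ simultaneously forces $\pi$ to be injective (so it is an isomorphism onto its image, which I read as the intended meaning of ``bijective'') and supplies the ucp inverse claimed in the statement. The existence of $\pi$ is immediate: the map $h \mapsto h \in C^*(G)$ is a unitary representation of $H$, so by the universal property of $C^*(H)$ it extends uniquely to a unital $*$-homomorphism $\pi: C^*(H) \to C^*(G)$.

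To build $E$ I would use the induced representation construction. Fix a faithful unital $*$-representation $\sigma: C^*(H) \hookrightarrow B(\cl K)$ and choose a set $T \subseteq G$ of left coset representatives, so that $G = \bigsqcup_{t \in T} tH$ with $e \in T$. For each $g \in G$ and $t \in T$, the element $gt$ lies in a unique coset $s(g,t)H$ with $s(g,t) \in T$, giving the unique factorization $gt = s(g,t)\cdot \alpha(g,t)$ with $\alpha(g,t) \in H$. On the Hilbert space $\tilde{\cl K} := \ell^2(T) \otimes \cl K$ define, for each $g \in G$,
$$
\tilde u_g(\delta_t \otimes v) := \delta_{s(g,t)} \otimes \sigma(\alpha(g,t)) v.
$$
Applying uniqueness of the factorization to $(g_1 g_2)t = g_1(g_2 t)$ yields the cocycle identities $s(g_1 g_2, t) = s(g_1, s(g_2, t))$ and $\alpha(g_1 g_2, t) = \alpha(g_1, s(g_2, t))\, \alpha(g_2, t)$, from which $\tilde u_{g_1 g_2} = \tilde u_{g_1} \tilde u_{g_2}$; each $\tilde u_g$ is then unitary. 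By the universal property of $C^*(G)$, $g \mapsto \tilde u_g$ extends to a unital $*$-homomorphism $\tilde\pi: C^*(G) \to B(\tilde{\cl K})$.

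Finally, let $P$ be the orthogonal projection onto $\cl K_0 := \delta_e \otimes \cl K \cong \cl K$ and set $\Phi := P\,\tilde\pi(\cdot)\, P: C^*(G) \to B(\cl K)$, which is ucp as a compression of a unital $*$-homomorphism. Since $s(g,e) = e$ precisely when $g \in eH = H$, on group elements one obtains $\Phi(g) = \sigma(g)$ if $g \in H$ and $\Phi(g) = 0$ otherwise. Hence $\Phi$ sends the dense $*$-subalgebra $\bb C[G]$ into $\sigma(C^*(H))$, and continuity of $\Phi$ together with closedness of $\sigma(C^*(H))$ gives $\Phi(C^*(G)) \subseteq \sigma(C^*(H))$. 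Setting $E := \sigma^{-1} \circ \Phi$ produces a ucp map $C^*(G) \to C^*(H)$, and $E \circ \pi = \id$ follows from checking on the generating unitaries $h \in H$. The main technical point is the cocycle verification ensuring that $\tilde u$ is a genuine unitary representation of $G$; once that is in place, the remainder reduces to an identification at the group element level and passage to closures.
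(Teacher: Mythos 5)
Your argument is correct and is essentially the proof the paper is implicitly invoking: the paper defers to \cite[Prop.\ 8.8]{pi}, whose standard proof is exactly your induced-representation construction on $\ell^2(T)\otimes\cl K$ followed by compression to the $\delta_e$-corner to obtain the conditional expectation $E$ with $E\circ\pi=\mathrm{id}$. The cocycle identities check out, and deducing isometry of $\pi$ from injectivity (forced by $E\circ\pi=\mathrm{id}$) is a clean way to get the embedding without a separate norm estimate.
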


So roughly speaking if $H$ is a subgroup of $G$ then the identity on $C^*(H)$ decomposes via ucp maps on $C^*(G)$.
The following is a direct consequence of  \cite[Lem. 5.2]{Kav2}.

\begin{proposition}
Suppose $\cl A$ and $\cl B$ are C*-algebras such that the identity decomposes via ucp map on $\cl B$, that is,
there are ucp maps $\phi: \cl A \rightarrow \cl B$ and $\psi: \cl B \rightarrow \cl A$ such that $\psi(\phi(a)) = a$ for all $a$ in $\cl A$.
Then any nuclearity property of $\cl B$ passes to $\cl A$. More precisely,
\begin{enumerate}
 \item if $\cl B$ is nuclear then $\cl A$ is nuclear;
  \item if $\cl B$ is exact then $\cl A$ is exact;
  \item if $\cl B$ has WEP then $\cl A$ has WEP;
 \item if $\cl B$ has LLP then $\cl A$ has LLP.
\end{enumerate}
\end{proposition}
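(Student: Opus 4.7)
The strategy is a standard retraction argument, exploiting the fact that every nuclearity property listed has a characterization as a $(\tau_1,\tau_2)$-nuclearity for two functorial tensor products. Concretely, the tensorial characterizations from Section~\ref{sec nuclearity} give
\begin{itemize}
\item nuclearity $\Leftrightarrow$ (min,max)-nuclearity,
\item exactness $\Leftrightarrow$ (min,el)-nuclearity,
\item WEP $\Leftrightarrow$ (el,max)-nuclearity,
\item LLP $\Leftrightarrow$ (min,er)-nuclearity.
\end{itemize}
Each of min, max, el, er is functorial, so the plan is to show that the hypothesis $\psi \circ \phi = \id_{\cl A}$ forces $\cl A$ to inherit any equality of two functorial tensor products from $\cl B$.

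The main lemma to record (or, more accurately, to state in passing) is the following: if $\tau_1 \leq \tau_2$ are functorial tensor products and $\cl B \otimes_{\tau_1} \cl T = \cl B \otimes_{\tau_2} \cl T$ for a given operator system $\cl T$, then $\cl A \otimes_{\tau_1} \cl T = \cl A \otimes_{\tau_2} \cl T$. The proof is the commuting diagram
\[
\cl A \otimes_{\tau_1} \cl T \xrightarrow{\phi \otimes \id_{\cl T}} \cl B \otimes_{\tau_1} \cl T = \cl B \otimes_{\tau_2} \cl T \xrightarrow{\psi \otimes \id_{\cl T}} \cl A \otimes_{\tau_2} \cl T,
\]
where each arrow is ucp by functoriality of $\tau_1$ and $\tau_2$, respectively, and whose composition is $(\psi \circ \phi)\otimes \id_{\cl T} = \id_{\cl A \otimes \cl T}$. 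Thus the canonical identity $\cl A \otimes_{\tau_1} \cl T \to \cl A \otimes_{\tau_2} \cl T$ is ucp, which, together with the reverse direction always being ucp (since $\tau_1 \leq \tau_2$), yields the equality.

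With this lemma in hand, each of the four conclusions follows by feeding in the appropriate pair of tensor products and letting $\cl T$ range over the relevant class: all operator systems $\cl T$ for (1), (2), (3), (where the three characterizations above are stated for general $\cl T$), and for (4) letting $\cl T = B(H)$ (using Theorem \ref{thm LLPchar}). Since $\cl A$ and $\cl B$ are C*-algebras in the statement, c and max coincide whenever one tensorant is a C*-algebra, so no issues arise when translating between the operator-system and C*-algebraic formulations of LLP and WEP.

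There is no real obstacle; the only thing to watch is making sure that every tensor product invoked is actually functorial (all of min, max, el, er, c satisfy this, as summarized in Section~2) and that for LLP one passes through $B(H)$ rather than a general $\cl T$, because LLP is characterized via nuclearity only against $B(H)$ inside the (min, er)-framework. Everything else is formal functoriality applied to the retraction $\psi \circ \phi = \id_{\cl A}$.
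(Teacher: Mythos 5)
Your proof is correct and is essentially the argument the paper intends: the paper defers this proposition to \cite[Lem.~5.2]{Kav2}, but the retraction-plus-functoriality device you use is exactly the one the paper itself spells out in the proof of Theorem \ref{thm F_2Z_2} ((3)$\Rightarrow$(2)), namely that $(\psi\otimes\id)\circ(\phi\otimes\id)=\id$ forces the canonical map $\cl A\otimes_{\tau_1}\cl T\to\cl A\otimes_{\tau_2}\cl T$ to be ucp. Your bookkeeping of the tensorial characterizations (min,max), (min,el), (el,max), (min,er)/$B(H)$ and of which products are functorial is accurate, so nothing further is needed.
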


Since $\bb F_{\infty}$ embeds in $\bb Z_2 * \bb Z_3$, the full C*-algebra $C^*(\bb F_{\infty})$ embeds
in $C^*(\bb Z_2 * \bb Z_3)$ with a ucp inverse. So, by the above proposition, any nuclearity property
of $C^*(\bb Z_2 * \bb Z_3)$ passes to $C^*(\bb F_{\infty})$. We are now ready to state Kirchberg's
WEP characterization \cite{Ki2} and its slight modification which will be more useful for us:
\begin{theorem}\label{thm F_2Z_2}
Let $\cl A$ be a unital C*-algebra. Then the following are equivalent:
\begin{enumerate}
 \item $\cl A$ has WEP.
 \item $\cl A \otimes_{min} C^*(\bb F_{\infty}) = \cl A \otimes_{max} C^*(\bb F_{\infty})$.
 \item $\cl A \otimes_{min} C^*(\bb Z_2 * \bb Z_3) = \cl A \otimes_{max} C^*(\bb Z_2 * \bb Z_3)$.
\end{enumerate}
\end{theorem}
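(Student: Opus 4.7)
The plan is to leverage the equivalence $(1) \Leftrightarrow (2)$, which is Kirchberg's classical WEP criterion, and insert $(3)$ into the cycle by proving $(1) \Rightarrow (3)$ and $(3) \Rightarrow (2)$. Both implications assemble from results collected earlier in the text and require no new technology, so the theorem is essentially a bookkeeping exercise with the tensor characterizations and the subgroup structure $\bb F_\infty \subset \bb Z_2 * \bb Z_3$.

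For $(1) \Rightarrow (3)$, I would run the argument of Example \ref{ex WEPLLP} with the roles of the two C*-algebras swapped. By Boca's Corollary above, $C^*(\bb Z_2 * \bb Z_3)$ has osLLP, so by Theorem \ref{thm LLPchar} it is $(min, er)$-nuclear, giving
\[
C^*(\bb Z_2 * \bb Z_3) \otimes_{min} \cl A = C^*(\bb Z_2 * \bb Z_3) \otimes_{er} \cl A.
\]
Via the asymmetry relation $\cl S \otimes_{er} \cl T = \cl T \otimes_{el} \cl S$, the right-hand side equals $\cl A \otimes_{el} C^*(\bb Z_2 * \bb Z_3)$. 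Since $\cl A$ has WEP it has DCEP, hence is $(el, c)$-nuclear, and therefore this equals $\cl A \otimes_c C^*(\bb Z_2 * \bb Z_3)$. Because $\cl A$ is a C*-algebra, $c$ and $max$ coincide on this tensor product, which after using the symmetry of $min$ and $max$ yields $(3)$.

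For $(3) \Rightarrow (2)$, I would use the proposition above which, from the subgroup embedding $\bb F_\infty \hookrightarrow \bb Z_2 * \bb Z_3$, produces a unital injective $*$-homomorphism $\iota: C^*(\bb F_\infty) \hookrightarrow C^*(\bb Z_2 * \bb Z_3)$ admitting a ucp left-inverse $\pi: C^*(\bb Z_2 * \bb Z_3) \to C^*(\bb F_\infty)$ with $\pi \circ \iota = \id$. Using functoriality of $\otimes_{min}$ and $\otimes_{\max}$ together with the hypothesis $(3)$, the chain
\[
\cl A \otimes_{min} C^*(\bb F_\infty) \xrightarrow{\id \otimes \iota} \cl A \otimes_{min} C^*(\bb Z_2 * \bb Z_3) = \cl A \otimes_{\max} C^*(\bb Z_2 * \bb Z_3) \xrightarrow{\id \otimes \pi} \cl A \otimes_{\max} C^*(\bb F_\infty)
\]
is a composition of ucp maps, and acts on the algebraic tensor as $\id \otimes (\pi \circ \iota) = \id$. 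Since the opposite map $\max \to min$ is always ucp (because $min \leq \max$), the two tensor products agree, which is $(2)$. Then $(2) \Rightarrow (1)$ is the hard direction of Kirchberg's criterion already quoted.

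I do not expect a genuine obstacle: the argument is purely formal once one has (i) Boca's LLP result for $C^*(\bb Z_2 * \bb Z_3)$, (ii) the subgroup-embedding factorization of $\id_{C^*(\bb F_\infty)}$ through $C^*(\bb Z_2 * \bb Z_3)$ via ucp maps, and (iii) the tensor-product characterizations of osLLP and DCEP recorded in Section \ref{sec nuclearity}. The only mild care needed is keeping track of the asymmetry of $el$ and $er$ versus the symmetry of $min$ and $max$ in step $(1) \Rightarrow (3)$, where the tensorands must be swapped to move from the $(min,er)$-nuclearity of $C^*(\bb Z_2 * \bb Z_3)$ on one side to the $(el,c)$-nuclearity of $\cl A$ on the other.
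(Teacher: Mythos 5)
Your proposal is correct and follows essentially the same route as the paper: $(1)\Rightarrow(3)$ is exactly the argument of Example \ref{ex WEPLLP} applied to the pair $(C^*(\bb Z_2 * \bb Z_3), \cl A)$, and $(3)\Rightarrow(2)$ uses the same ucp factorization of $\id_{C^*(\bb F_\infty)}$ through $C^*(\bb Z_2 * \bb Z_3)$ together with functoriality of $min$ and $max$. The only cosmetic difference is in the last step: the paper deduces that $\id\otimes\phi$ is a complete order embedding at both the $min$ and $max$ levels and then identifies the two copies inside the common codomain, whereas you compose the three ucp maps directly into the identity $min\to max$; both are the same retraction trick.
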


\begin{proof}
The equivalence of (1)  and (2) is Kirchberg's WEP characterization. (1) implies (3) follows from
the fact that $C^*(\bb Z_2 * \bb Z_3)$ has LLP. Since $\cl A$ has WEP, by Example \ref{ex WEPLLP}, we obtain (3). 
To see (3) $\Rightarrow$ (2), let $\phi: C^*(\bb F_{\infty}) \rightarrow C^*(\bb Z_2 * \bb Z_3)$ and 
$\psi: C^*(\bb Z_2 * \bb Z_3) \rightarrow C^*(\bb F_{\infty})$ bu ucp maps such that their composition is the
identity on  $ C^*(\bb F_{\infty})$. In the following we will use the fact that the \textit{min} is and the \textit{max} are functorial
tensor product. We will also use the fact that if the composition of two ucp maps is a complete order embedding
then the first map has the same property. We have
$$
\cl A \otimes_{max} C^*(\bb F_{\infty}) \xrightarrow{id \otimes \phi}  \cl A \otimes_{max} C^*(\bb Z_2 * \bb Z_3) \xrightarrow{id \otimes \psi}
\cl A \otimes_{max} C^*(\bb F_{\infty})
$$
is a sequence of ucp maps such that the composition is the identity on $\cl A \otimes_{max} C^*(\bb F_{\infty})$. This
means that $id \otimes \phi$ is a complete order embedding. If we consider the same ucp maps with max is replaced
by min we again see that $id \otimes \phi$ is a complete order embedding. Thus we have the embeddings
$$
\cl A \otimes_{max} C^*(\bb F_{\infty}) \xrightarrow{id \otimes \phi}  \cl A \otimes_{max} C^*(\bb Z_2 * \bb Z_3) \mbox{ and }
\cl A \otimes_{min} C^*(\bb F_{\infty}) \xrightarrow{id \otimes \phi}  \cl A \otimes_{min} C^*(\bb Z_2 * \bb Z_3).
$$
Since the tensor products on the right hand side coincides it follows that (3) implies (2).
\end{proof}

The following is one of our main results in this section. It is a four dimensional operator system version
of the above theorem. In the remaining of this and the next section we will have  several application of this
theorem. As usual $J$ stands for the one dimensional null-subspace $span\{(1,1,-1,-1,-1)\} \subset \bb C^5$.
\begin{theorem}\label{thm WEPcri}
A unital C*-algebra $\cl A$ has WEP if and only if 
$$
\cl A \otimes_{min} (\bb C^5/J) = \cl A \otimes_{max}  (\bb C^5/J).
$$
\end{theorem}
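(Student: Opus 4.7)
The plan is to reduce this theorem to Kirchberg's classical WEP criterion in the form of Theorem \ref{thm F_2Z_2}(3), which identifies WEP of $\cl A$ with the equality $\cl A \otimes_{min} C^*(\bb Z_2 * \bb Z_3) = \cl A \otimes_{max} C^*(\bb Z_2 * \bb Z_3)$. The bridge is Theorem \ref{thm equiv} (see Remark \ref{rem C5/J}), which realizes $\bb C^5/J$ as an operator subsystem of $C^*(\bb Z_2 * \bb Z_3)$ containing enough unitaries, together with the fact that $\bb C^5/J$ has the lifting property (LP): the latter follows from Theorem \ref{thm QouhasLP} applied to the null-subspace $J \subset \bb C^5$, since $\bb C^5$ is nuclear and hence has LP.

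For the forward direction, I would chain together two nuclearity facts. First, by Theorem \ref{thm LLPchar}, the LP of $\bb C^5/J$ is equivalent to its $(min,er)$-nuclearity, so $(\bb C^5/J) \otimes_{min} \cl A = (\bb C^5/J) \otimes_{er} \cl A$. Second, WEP of $\cl A$ is equivalent to $(el,max)$-nuclearity of $\cl A$, yielding $\cl A \otimes_{el} (\bb C^5/J) = \cl A \otimes_{max} (\bb C^5/J)$. Using symmetry of $\otimes_{min}$ and $\otimes_{max}$ together with the $el/er$ asymmetry relation $\cl X \otimes_{er} \cl Y = \cl Y \otimes_{el} \cl X$, these two equalities splice into
$$
\cl A \otimes_{min} (\bb C^5/J) = (\bb C^5/J) \otimes_{er} \cl A = \cl A \otimes_{el} (\bb C^5/J) = \cl A \otimes_{max} (\bb C^5/J).
$$

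For the converse, I would exploit the fact that $\cl A$ is a unital C*-algebra, so $\otimes_c$ and $\otimes_{max}$ coincide when $\cl A$ is one of the tensorands. Thus the hypothesis reads $\cl A \otimes_{min} (\bb C^5/J) = \cl A \otimes_c (\bb C^5/J)$. Since every unital C*-algebra contains enough unitaries in itself and $\bb C^5/J$ contains enough unitaries in $C^*(\bb Z_2 * \bb Z_3)$ by Theorem \ref{thm equiv}, Proposition \ref{pro enough} upgrades this operator-system equality to the C*-algebra identity $\cl A \otimes_{min} C^*(\bb Z_2 * \bb Z_3) = \cl A \otimes_{max} C^*(\bb Z_2 * \bb Z_3)$. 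An appeal to Theorem \ref{thm F_2Z_2} concludes that $\cl A$ has WEP.

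In truth there is no serious obstacle once the machinery of Sections 2--4 has been assembled: the heart of the argument is not a computation but the recognition that $\bb C^5/J$ simultaneously (i) has LP, (ii) embeds with enough unitaries into $C^*(\bb Z_2 * \bb Z_3)$, and (iii) is therefore a legitimate four-dimensional ``test object'' against which WEP can be detected. The only mild subtlety to flag in writing up is being careful with the placement of tensorands (since $el$ and $er$ are asymmetric), and noting that the symmetry of $\otimes_{min}$ and $\otimes_{max}$ is what allows the final chain to close.
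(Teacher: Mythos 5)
Your proposal is correct and follows essentially the same route as the paper: the forward direction splices the $(min,er)$-nuclearity of $\bb C^5/J$ (from its lifting property, via Theorem \ref{thm QouhasLP} and Theorem \ref{thm LLPchar}) with the $(el,max)$-nuclearity of $\cl A$, using the $el/er$ asymmetry, and the converse passes through Proposition \ref{pro enough} and the enough-unitaries embedding of $\bb C^5/J$ into $C^*(\bb Z_2 * \bb Z_3)$ to invoke Kirchberg's criterion in the form of Theorem \ref{thm F_2Z_2}. The only cosmetic difference is that you write out the $min = er = el = max$ chain with both orderings of the tensorands explicitly, whereas the paper compresses this into a remark about the right-hand placement of $\bb C^5/J$.
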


\begin{proof}
If $\cl A$ has WEP then it is (el,max)-nuclear. So we get 
$\cl A \otimes_{el} \cq{5} = \cl A \otimes_{max} \cq{5}$. Also, by Theorem \ref{thm QouhasLP}, 
$\cq{5}$ has the lifting property or (min,er)-nuclearity. Since it is written to right
hand side, by using the asymmetry of \textit{el} and \textit{er},  we have $\cl A \otimes_{min} \cq{5} = \cl A \otimes_{el} \cq{5}$. This
proves one direction. Now suppose the converse. From the last section we have that $\cq{5}$
unitally completely order isomorphic to an operator subsystem of $C^*(\bb Z_2 * \bb Z_3)$
which contains enough unitaries. Now, by using Proposition \ref{pro enough}, $\cl A \otimes_{min} \cq{5} = \cl A \otimes_{c=max} \cq{5}$
implies that $\cl A \otimes_{min} C^*(\bb Z_2 * \bb Z_3) = \cl A \otimes_{max} C^*(\bb Z_2 * \bb Z_3)$. Thus, by the
above theorem, $\cl A$ has WEP.
\end{proof}

The above theorem can also be extended to general operator systems. We need a preliminary lemma. The following is
\cite[Lem. 5.4]{Kav2}.
\begin{lemma}
Let $\cl T \subset \cl B$ contains enough unitaries, say $\{u_\alpha\}$, and $\varphi: \cl B \rightarrow \cl C$, where $\cl C$
is a C*-algebra, such that $\varphi(u_\alpha)$ is a unitary  in $\cl C$ for all $\alpha$ then $\varphi$ must be a unital
$*$-homomorphism.
\end{lemma}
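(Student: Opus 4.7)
The plan is to deduce this from the classical multiplicative domain argument for unital completely positive maps (the paper's conventions throughout imply that $\varphi$ is a ucp map; this is the setting in which ``contains enough unitaries'' is meaningful). For any ucp map $\varphi : \cl B \rightarrow \cl C$ between unital C*-algebras, recall that the \emph{multiplicative domain}
$$
\cl M(\varphi) = \{ b \in \cl B : \varphi(b^*b) = \varphi(b)^*\varphi(b) \text{ and } \varphi(bb^*) = \varphi(b)\varphi(b)^* \}
$$
is a norm-closed, unital, $*$-closed subalgebra of $\cl B$, and $\varphi$ restricts to a unital $*$-homomorphism on $\cl M(\varphi)$. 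This is a standard consequence of the Kadison--Schwarz inequality (or, equivalently, of Stinespring's dilation theorem and a direct computation on the compression).

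The first step is to verify that every generating unitary $u_\alpha$ belongs to $\cl M(\varphi)$. Since $u_\alpha$ is a unitary in $\cl B$ we have $u_\alpha^* u_\alpha = 1 = u_\alpha u_\alpha^*$, so by unitality of $\varphi$,
$$
\varphi(u_\alpha^* u_\alpha) = 1 \quad \text{and} \quad \varphi(u_\alpha u_\alpha^*) = 1.
$$
The hypothesis that $\varphi(u_\alpha)$ is a unitary in $\cl C$ gives $\varphi(u_\alpha)^*\varphi(u_\alpha) = 1 = \varphi(u_\alpha)\varphi(u_\alpha)^*$, which matches the previous line. Hence $u_\alpha \in \cl M(\varphi)$ for every $\alpha$.

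To conclude, $\cl M(\varphi)$ is a C*-subalgebra of $\cl B$ containing the family $\{u_\alpha\}$. By the hypothesis that $\cl T \subset \cl B$ contains enough unitaries, the collection $\{u_\alpha\}$ generates $\cl B$ as a C*-algebra, so necessarily $\cl M(\varphi) = \cl B$. Therefore $\varphi$ is a unital $*$-homomorphism on all of $\cl B$. There is no substantive obstacle in this argument: it is essentially a one-line invocation of the multiplicative domain, and the only content is the observation that unitaries with unitary images always lie in that domain. The hypothesis ``enough unitaries'' is then exactly what is needed to spread the $*$-homomorphism property from a generating set to the whole C*-algebra.
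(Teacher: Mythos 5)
Your proof is correct, and it is essentially the intended argument: the paper states this lemma without proof, citing \cite[Lem.\ 5.4]{Kav2}, where it is established by exactly this multiplicative-domain reasoning (note that the paper itself invokes Choi's theory of multiplicative domains, \cite[Thm.\ 3.18]{Pa}, immediately after applying the lemma). You are also right to flag that the hypothesis that $\varphi$ be ucp is implicit but essential; with that understood, the observation that a unitary with unitary image forces equality in the Schwarz inequality, hence membership in $\cl M(\varphi)$, together with the fact that $\cl M(\varphi)$ is a C*-subalgebra containing a generating set, completes the proof.
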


\begin{theorem}\label{thm DCEPCri}
An operator system $\cl S$ has DCEP if and only if $\cl S \otimes_{min} (\bb C^5/J) = \cl S \otimes_{c}  (\bb C^5/J).$
\end{theorem}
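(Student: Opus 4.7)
The plan is to follow the template of Theorem~\ref{thm WEPcri}, replacing the maximal tensor product with the commuting tensor product throughout; this substitution is forced by the fact that $\cl S$ is now a general operator system, so the relevant nuclearity for the left factor is DCEP rather than WEP.

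For the forward direction, suppose $\cl S$ has DCEP. By the characterization recalled in Subsection~\ref{sec DCEP}, $\cl S$ is $(el,c)$-nuclear, so $\cl S \otimes_{el}(\bb C^5/J) = \cl S \otimes_{c}(\bb C^5/J)$. On the other hand, since $J$ is a one-dimensional null subspace of $\bb C^5$ and $\bb C^5$ is nuclear (in particular has the lifting property), Theorem~\ref{thm QouhasLP} implies that $\bb C^5/J$ has the lifting property, which by Theorem~\ref{thm LLPchar} means $(min,er)$-nuclearity. Because $\bb C^5/J$ sits on the right of the tensor, the asymmetry $\cl X\otimes_{el}\cl Y = \cl Y\otimes_{er}\cl X$ rewrites this as $\cl S \otimes_{min}(\bb C^5/J) = \cl S \otimes_{el}(\bb C^5/J)$, and chaining the two equalities finishes this direction.

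For the reverse direction, assume the tensor equality. By the DCEP characterization from Subsection~\ref{sec DCEP} it is enough to show $\cl S \otimes_{min}C^*(\bb F_\infty) = \cl S \otimes_{max}C^*(\bb F_\infty)$. I would first upgrade the hypothesis from $\bb C^5/J$ to the full C*-algebra $C^*(\bb Z_2 * \bb Z_3)$. By Theorem~\ref{thm equiv}, $\bb C^5/J$ embeds as an operator subsystem of $C^*(\bb Z_2 * \bb Z_3)$ that contains enough unitaries, and a one-sided analogue of Proposition~\ref{pro enough} applied to this embedding on the right should give
\[
\cl S \otimes_{min} C^*(\bb Z_2 * \bb Z_3) = \cl S \otimes_{c} C^*(\bb Z_2 * \bb Z_3) = \cl S \otimes_{max} C^*(\bb Z_2 * \bb Z_3),
\]
the last equality because the right tensorant is a C*-algebra. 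Since $\bb F_\infty$ embeds into $\bb Z_2 * \bb Z_3$ as a subgroup, the ucp decomposition proposition preceding Theorem~\ref{thm F_2Z_2} yields a ucp inclusion $C^*(\bb F_\infty)\hookrightarrow C^*(\bb Z_2 * \bb Z_3)$ with a ucp left inverse; running verbatim the functoriality argument used in the implication $(3)\Rightarrow(2)$ of Theorem~\ref{thm F_2Z_2} transfers the identification to $\cl S \otimes_{min}C^*(\bb F_\infty) = \cl S \otimes_{max}C^*(\bb F_\infty)$, and hence to DCEP.

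The main obstacle is the one-sided analogue of Proposition~\ref{pro enough}: the statement in the text requires both tensorants to sit as operator subsystems with enough unitaries of ambient C*-algebras, whereas the left tensorant $\cl S$ here is an arbitrary operator system. The expected route is to exploit the description of $\otimes_c$-positivity by commuting pairs of ucp maps, extend the map on $\bb C^5/J$ to a ucp map on $C^*(\bb Z_2 * \bb Z_3)$ via Arveson's extension theorem, and then invoke the lemma that a ucp map on a C*-algebra which is multiplicative on a generating set of unitaries is automatically a $*$-homomorphism in order to preserve the commutation with the image of $\cl S$. This adaptation, rather than any of the subsequent steps, is where the technical care is concentrated.
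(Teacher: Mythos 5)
Your overall architecture coincides with the paper's: the forward direction is carried out exactly as you describe ($(el,c)$-nuclearity of $\cl S$ chained with $(min,er)$-nuclearity of $\bb C^5/J$ via the $el$/$er$ asymmetry), and the reverse direction proceeds through the same three stages (upgrade to $C^*(\bb Z_2 * \bb Z_3)$, transfer to $C^*(\bb F_\infty)$ through the ucp factorization of the identity, then invoke the tensorial DCEP criterion). The one point that needs repair is the mechanism you sketch for the crucial one-sided step, and as written it would not go through. You propose to extend the leg of a commuting pair defined on $\bb C^5/J \cong \cl R = span\{e,a,b,b^*\}$ to $C^*(\bb Z_2 * \bb Z_3)$ by Arveson and then use the enough-unitaries lemma to preserve commutation with the image of $\cl S$. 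But that lemma applies only to a ucp map that sends the generating unitaries to \emph{unitaries}; an arbitrary ucp map $\psi : \cl R \rightarrow B(H)$ arising from a commuting pair sends $a$ and $b$ merely to contractions, so its Arveson extension has no reason to be multiplicative on $C^*(\bb Z_2 * \bb Z_3)$ and cannot be forced to commute with $\phi(\cl S)$.

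The paper's proof extends a different map. One represents $\cl S \otimes_{max} C^*(\bb Z_2 * \bb Z_3)$ in a $B(H)$ in which $C^*(\bb Z_2 * \bb Z_3)$ sits as an honest C*-subalgebra commuting with the copy of $\cl S$. The hypothesis $min = c$ squeezes the operator system structure on $\cl S \otimes \cl R$ induced from $\cl S \otimes_{max} C^*(\bb Z_2 * \bb Z_3)$ between $min$ and $c$, yielding a complete order embedding $i : \cl S \otimes_{min} \cl R \hookrightarrow \cl S \otimes_{max} C^*(\bb Z_2 * \bb Z_3) \subset B(H)$. Choosing any C*-algebra $\cl A \supset \cl S$ and using injectivity of $min$, one Arveson-extends $i$ to a ucp map $\gamma$ on $\cl A \otimes_{min} C^*(\bb Z_2 * \bb Z_3)$. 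It is this $\gamma$ that sends the unitaries $e \tens a$ and $e \tens b$ to genuine unitaries of $B(H)$, so the lemma makes $\gamma$ a unital $*$-homomorphism (in fact the identity) on $e \tens C^*(\bb Z_2 * \bb Z_3)$, and Choi's multiplicative domain theorem then gives $\gamma(s \tens y) = s \tens y$ for all $s \in \cl S$ and $y \in C^*(\bb Z_2 * \bb Z_3)$, which is precisely the desired equality $\cl S \otimes_{min} C^*(\bb Z_2 * \bb Z_3) = \cl S \otimes_{max} C^*(\bb Z_2 * \bb Z_3)$. With that correction your outline reproduces the paper's argument.
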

\begin{proof}
First assume that $\cl S$ has DCEP (eq. (el,c)-nuclearity). Since $(\bb C^5/J)$ has the lifting property (or (min,er)-nuclearity)
we obtain
$$
\cl S \otimes_{min} (\bb C^5/J) = \cl S \otimes_{el}  (\bb C^5/J) = \cl S \otimes_{c} (\bb C^5/J).
$$
This proves one direction. Conversely let $\cl S \otimes_{min} (\bb C^5/J) = \cl S \otimes_{c}  (\bb C^5/J)$. Recall
that if we let $\bb Z_2 * \bb Z_3 = \langle a,b: a^2 = b^3 = e \rangle$ then $\cl R = span\{e,a,b,b^*\}$ is an operator
subsystem of $C^*(\bb Z_2 * \bb Z_3) $ which contains enough unitaries. Moreover $\bb C^5/J$ and $\cl R$ are unitally
completely order isomorphic. So our assumption is equivalent to $\cl S \otimes_{min} \cl R = \cl S \otimes_{c}  \cl R$.
As a first step we claim that $\cl S \otimes_{min} \cl R \subset \cl S \otimes_{max} C^*(\bb Z_2 * \bb Z_3)$ (and let $i$ denote this inclusion).
In fact letting $\tau$ be the operator system structure on $\cl S \otimes \cl R$ arising from the inclusion
$\cl S \otimes_{max} C^*(\bb Z_2 * \bb Z_3)$ we see that $min \leq \tau \leq c$. Since $min = c$ by our assumption
our claim follows. Secondly we wish to show that 
\begin{eqnarray}\label{111}
\cl S \otimes_{min} C^*(\bb Z_2 * \bb Z_3) = \cl S \otimes_{max} C^*(\bb Z_2 * \bb Z_3).
\end{eqnarray}
To see this we first represent $ \cl S \otimes_{max} C^*(\bb Z_2 * \bb Z_3)$ into a $B(H)$ such a 
way that the portions ``$ \cl S$'' and ``$C^*(\bb Z_2 * \bb Z_3)$'' commute and ``$C^*(\bb Z_2 * \bb Z_3)$'' is a
C*-subalgebra of $B(H)$. Let $\cl A$ be any C*-algebra containing $\cl S$ as an operator subsystem.
By the injectivity of $min$ we have $\cl S \otimes_{min} \cl R  \subset \cl A \otimes_{min} C^*(\bb Z_2 * \bb Z_3) $.
By using Arveson's lifting theorem we obtain a ucp map $\gamma :  \cl A \otimes_{min} C^*(\bb Z_2 * \bb Z_3) \rightarrow B(H)$
extending $i$.
$$
\xymatrix{
\cl S \otimes_{min} \cl R \ar@{^{(}->}[rr]^i   \ar@{}[d]|-{\bigcap}  & &\cl S \otimes_{max}C^*(\bb Z_2 * \bb Z_3)  \ar@{}[r]|-{\subset} & B(H) \\
 \cl A \otimes_{min} C^*(\bb Z_2 * \bb Z_3)  \ar@{.>}[rrru]_{\gamma} & & &}
$$
When $\gamma$ is restricted to ``$C^*(\bb Z_2 * \bb Z_3)$'' it must be the identity. In fact the above lemma ensures that it has 
to be a unital $*$-homomorphism as it maps $a,b,b^*$ to $a,b,b^*$, respectively. From this it is easy to see that it is the identity on
``$C^*(\bb Z_2 * \bb Z_3)$'' as $a,b,b^*$ generates ``$C^*(\bb Z_2 * \bb Z_3)$''. 
This means that $\gamma$ has to be $C^*(\bb Z_2 * \bb Z_3)$-module map
in the sense that for $x\in \cl A$ and $y \in C^*(\bb Z_2 * \bb Z_3)$ $\gamma(x \otimes y) = \gamma(x \otimes e)\gamma(e\otimes y)$.
This follows from the theory of Choi on multiplicative domains \cite[Thm. 3.18]{Pa}. Now if we restrict
$\gamma$ on $\cl S \otimes_{min} C^*(\bb Z_2 * \bb Z_3) $ it is the identity since for $s \in \cl S$ and $y \in C^*(\bb Z_2 * \bb Z_3)$
we have $\gamma(s \otimes y) = \gamma(s \otimes e)\gamma(e\otimes y) = (s \otimes e)(e\otimes y) = s \otimes y$. This
proves our claim, that is, the equality in Equation \ref{111} is satisfied. As a final step we want to show that
$$
\cl S \otimes_{min} C^*(\bb F_\infty) = \cl S \otimes_{max} C^*(\bb F_\infty).
$$
This again follows from the fact that the identity on $C^*(\bb F_\infty)$ factors via ucp maps on $ C^*(\bb Z_2 * \bb Z_3)$.
The proof is same as that of Theorem \ref{thm F_2Z_2}, (3)$\Rightarrow$(2). Now the DCEP criteria
given in Subsection \ref{sec DCEP} implies that $\cl S$ has DCEP.
\end{proof}

\begin{remark}
Clearly Theorem \ref{thm WEPcri} can be viewed as a corollary of Theorem \ref{thm DCEPCri} as DCEP and WEP
coincide on C*-algebras and $c$ and $max$ are the same when one of the tensorant is a C*-algebra. We proved
Theorem \ref{thm WEPcri} separately as simpler arguments are used.
\end{remark}

\begin{corollary}\label{cor WEPCriB(H)}
Let $\cl A \subset B(H)$ be a unital C*-subalgebra. Then $\cl A$ has WEP if and only if we have the
complete order embedding
\begin{eqnarray}\label{aa}
\cl A \otimes_{max} \cq{5} \subset B(H) \otimes_{max} \cq{5}.
\end{eqnarray}
\end{corollary}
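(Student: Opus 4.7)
The plan is to deduce this as a direct corollary of Theorem \ref{thm WEPcri}, using two structural facts: $B(H)$, being an injective operator system, has WEP, and the minimal tensor product is injective. The key preparatory observation is that Theorem \ref{thm WEPcri} applied to the unital C*-algebra $B(H)$ gives the identification $B(H) \otimes_{min} \cq{5} = B(H) \otimes_{max} \cq{5}$, while injectivity of $min$ always provides the complete order embedding $\cl A \otimes_{min} \cq{5} \subset B(H) \otimes_{min} \cq{5}$.

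For the forward implication, I would assume $\cl A$ has WEP and apply Theorem \ref{thm WEPcri} on the $\cl A$ side to obtain $\cl A \otimes_{max} \cq{5} = \cl A \otimes_{min} \cq{5}$. Chaining this with the two facts above yields
$$\cl A \otimes_{max} \cq{5} = \cl A \otimes_{min} \cq{5} \subset B(H) \otimes_{min} \cq{5} = B(H) \otimes_{max} \cq{5},$$
which is precisely the embedding in (\ref{aa}).

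For the converse, I would start from the hypothesis (\ref{aa}) and consider the natural ucp identity map $\cl A \otimes_{max} \cq{5} \to \cl A \otimes_{min} \cq{5}$ coming from $min \leq max$. Both source and target sit inside $B(H) \otimes_{max} \cq{5} = B(H) \otimes_{min} \cq{5}$ as complete order embeddings of the same underlying algebraic tensor product $\cl A \otimes \cq{5}$ (the source via (\ref{aa}), the target via injectivity of $min$). Since these complete order embeddings realize the same set inside a single ambient operator system, the matricial cones induced by $\otimes_{min}$ and $\otimes_{max}$ on $\cl A \otimes \cq{5}$ must agree at every level. Thus $\cl A \otimes_{min} \cq{5} = \cl A \otimes_{max} \cq{5}$, and Theorem \ref{thm WEPcri} delivers WEP of $\cl A$. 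I do not anticipate a substantive obstacle; the argument is a short diagram chase whose only nontrivial input is Theorem \ref{thm WEPcri}, with the mild point to be careful about being the observation that $B(H)$ itself satisfies the hypothesis of that theorem and may therefore be used as an ambient ``WEP envelope'' on both sides.
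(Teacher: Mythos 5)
Your argument is correct and matches the paper's proof essentially line for line: injectivity of $min$ gives the embedding $\cl A \otimes_{min} \cq{5} \subset B(H) \otimes_{min} \cq{5}$, the identification $B(H)\otimes_{min}\cq{5}=B(H)\otimes_{max}\cq{5}$ lets you pass between $min$ and $max$ on the ambient side, and Theorem \ref{thm WEPcri} converts WEP of $\cl A$ into the equality of the two tensor products in both directions. The only cosmetic difference is that the paper justifies the $B(H)$ identification via the lifting property of $\cq{5}$ rather than via WEP of the injective operator system $B(H)$; the two justifications are interchangeable here.
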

\begin{proof}
By the injectivity of the minimal tensor product we readily have the complete order embedding
\begin{eqnarray}\label{aaa}
 \cl A \otimes_{min} \cq{5} \subset B(H) \otimes_{min} \cq{5}.
\end{eqnarray}
Moreover, since $\cq{5}$ has the lifting property we also have $ B(H) \otimes_{min} \cq{5} =  B(H) \otimes_{max} \cq{5}$.
Now supposing $\cl A$ has WEP, by the above theorem, we can simply replace min by max in \ref{aaa} and obtain \ref{aa}.
Conversely assuming \ref{aa} holds, combining with \ref{aaa} we get   $\cl A \otimes_{min} \cq{5} = \cl A \otimes_{max} \cq{5}$.
Thus, by the above theorem, we conclude that $\cl A$ has WEP.
\end{proof}

\begin{remark}
The above corollary can be extended as follows: Let $\cl A$ be a unital 
C*-subalgebra of a C*-algebra $\cl B$. Suppose $\cl B$ has WEP.
Then $\cl A$ has WEP if and only if we have the completely order embedding
$$
\cl A \otimes_{max} \cq{5} \subset \cl B \otimes_{max} \cq{5}.
$$
In fact we already have that $ \cl A \otimes_{min} \cq{5} \subset \cl B \otimes_{min} \cq{5} $. 
Also, as $\cl B$ has WEP, by Theorem \ref{thm WEPcri}, 
$\cl B \otimes_{min} \cq{5} = \cl B \otimes_{max} \cq{5}$. Following the same argument in the
proof of the above corollary we obtain the desired result.
\end{remark}

\begin{remark}
Any injective operator system has WEP. (Also recall that every injective operator system has a
structure of a C*-algebra.) Thus, a unital C*-algebra has WEP if and only if
 $$
\cl A \otimes_{max} \cq{5} \subset I(\cl A) \otimes_{max} \cq{5}
$$
completely order isomorphically where $I(\cl A)$ is the injective envelope of $\cl A$.
\end{remark}

The following is the four dimensional operator system variant of the Kirchberg Conjecture.

\begin{theorem}\label{thm 4dimKC}
The following are equivalent:
\begin{enumerate}
 \item The Kirchberg conjecture has an affirmative answer.
 \item $\bb C^5 / J$ has DCEP.
 \item $(\bb C^5 / J) \otimes_{min} (\bb C^5 / J) = (\bb C^5 / J) \otimes_{c} (\bb C^5 / J)$.
\end{enumerate}
\end{theorem}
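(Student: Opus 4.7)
The plan is to establish the three equivalences by stringing together results that the paper has already set up, rather than by doing any fresh hard work. The genuine content is in the WEP/DCEP characterization via $\bb C^5/J$ (Theorem \ref{thm DCEPCri}) and in the ``enough unitaries'' principle (Proposition \ref{pro enough}); once these are available, all three directions are essentially formal.

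First I would dispose of (2) $\Leftrightarrow$ (3) as a one-line consequence of Theorem \ref{thm DCEPCri} applied with $\cl S := \bb C^5/J$: the DCEP criterion in that theorem reads exactly $\cl S \otimes_{min}(\bb C^5/J) = \cl S \otimes_c (\bb C^5/J)$, which is (3) when $\cl S = \bb C^5/J$.

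Next, for (1) $\Rightarrow$ (2), I would invoke the reformulation of Kirchberg's conjecture recorded in the introduction (from \cite{kptt2}): KC is equivalent to the statement that every finite dimensional operator system with the lifting property has DCEP. By Theorem \ref{thm QouhasLP}, the quotient $\bb C^5/J$ of $\bb C^5$ by a null-subspace inherits the lifting property from $\bb C^5$ (which is nuclear, hence has LP). So if (1) holds, then $\bb C^5/J$ must have DCEP, giving (2).

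The only direction with any genuine content is (2) $\Rightarrow$ (1), and here the strategy is: bootstrap min\,$=$\,c on $\bb C^5/J \otimes \bb C^5/J$ up to min\,$=$\,max on $C^*(\bb F_\infty) \otimes C^*(\bb F_\infty)$. By Theorem \ref{thm equiv}/Remark \ref{rem C5/J}, $\bb C^5/J$ embeds in $C^*(\bb Z_2 * \bb Z_3)$ as an operator subsystem containing enough unitaries. Assuming (2), hence (3), Proposition \ref{pro enough} applied with $\cl S = \cl T = \bb C^5/J$ and $\cl A = \cl B = C^*(\bb Z_2*\bb Z_3)$ promotes the equality to
\[
C^*(\bb Z_2 * \bb Z_3) \otimes_{min} C^*(\bb Z_2 * \bb Z_3) = C^*(\bb Z_2 * \bb Z_3) \otimes_{max} C^*(\bb Z_2 * \bb Z_3).
\]
Finally, since $\bb F_\infty$ embeds as a subgroup of $\bb Z_2 * \bb Z_3$, the identity on $C^*(\bb F_\infty)$ factors via ucp maps through $C^*(\bb Z_2*\bb Z_3)$ (by the proposition quoted before Theorem \ref{thm F_2Z_2}). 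The same functoriality-plus-ucp-retraction trick used in the proof of (3)$\Rightarrow$(2) of Theorem \ref{thm F_2Z_2} then transfers the above equality to $C^*(\bb F_\infty) \otimes_{min} C^*(\bb F_\infty) = C^*(\bb F_\infty) \otimes_{max} C^*(\bb F_\infty)$, which is Kirchberg's conjecture.

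The only place that requires care is verifying that the retraction argument transfers not just ``min\,$=$\,max'' for the algebra tensored with a fixed C*-algebra, but for its tensor square with itself; this is routine because one can apply the ucp sandwich $\phi : C^*(\bb F_\infty) \to C^*(\bb Z_2*\bb Z_3)$, $\psi$ in reverse, on both tensor factors in turn, using functoriality of min and max and the fact that a composition of ucp maps which is a complete order embedding forces the first map to be a complete order embedding. No further obstacle is expected.
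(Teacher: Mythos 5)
Your proof is correct and follows essentially the same route as the paper: (1)$\Rightarrow$(2) via the \cite{kptt2} reformulation of the conjecture together with Theorem \ref{thm QouhasLP}, and the passage back to Kirchberg's conjecture via Proposition \ref{pro enough} applied to the enough-unitaries copy of $\bb C^5/J$ inside $C^*(\bb Z_2 * \bb Z_3)$, followed by the ucp retraction onto $C^*(\bb F_\infty)$. The only divergence is that you obtain (2)$\Leftrightarrow$(3) in one line from Theorem \ref{thm DCEPCri} with $\cl S = \bb C^5/J$ --- a legitimate and slightly cleaner shortcut --- whereas the paper closes the cycle by proving (2)$\Rightarrow$(3) directly from the (min,er)- and (el,c)-nuclearity of $\bb C^5/J$; both rest on the same underlying facts.
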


For its proof we will need:
\begin{proposition}
The following are equivalent:
\begin{enumerate}
 \item The Kirchberg conjecture has an affirmative answer.
 \item $C^*(\bb Z_2 * \bb Z_3)$ has WEP.
 \item $C^*(\bb Z_2 * \bb Z_3) \otimes_{min} C^*(\bb Z_2 * \bb Z_3) =
C^*(\bb Z_2 * \bb Z_3) \otimes_{max} C^*(\bb Z_2 * \bb Z_3)$.
\end{enumerate}
\end{proposition}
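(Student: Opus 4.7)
The strategy is to prove (1)~$\Rightarrow$~(2)~$\Rightarrow$~(3)~$\Rightarrow$~(1), pulling in three ingredients already in place: (a)~Boca's corollary that $C^*(\bb Z_2 * \bb Z_3)$ has LLP; (b)~the ucp retraction of $C^*(\bb Z_2 * \bb Z_3)$ onto $C^*(\bb F_\infty)$ coming from the subgroup inclusion $\bb F_\infty \subset \bb Z_2 * \bb Z_3$, i.e.\ a $*$-embedding $\phi: C^*(\bb F_\infty) \hookrightarrow C^*(\bb Z_2 * \bb Z_3)$ together with a ucp map $\psi$ in the reverse direction with $\psi \circ \phi = \id$; and (c)~Theorem~\ref{thm F_2Z_2}, which says a unital C*-algebra $\cl A$ has WEP if and only if $\cl A \otimes_{min} C^*(\bb Z_2 * \bb Z_3) = \cl A \otimes_{max} C^*(\bb Z_2 * \bb Z_3)$, and equivalently iff the analogous equality holds with $C^*(\bb F_\infty)$.

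The implications (1)~$\Rightarrow$~(2) and (2)~$\Rightarrow$~(3) are essentially automatic. For (1)~$\Rightarrow$~(2), invoke (a): $C^*(\bb Z_2 * \bb Z_3)$ has LLP, so the Kirchberg conjecture supplies WEP. For (2)~$\Rightarrow$~(3), apply Theorem~\ref{thm F_2Z_2} with $\cl A = C^*(\bb Z_2 * \bb Z_3)$ itself: the WEP of $\cl A$ means precisely $\cl A \otimes_{min} C^*(\bb Z_2 * \bb Z_3) = \cl A \otimes_{max} C^*(\bb Z_2 * \bb Z_3)$, which is (3).

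For (3)~$\Rightarrow$~(1), I would first reduce the Kirchberg conjecture to the single equality
$$
C^*(\bb F_\infty) \otimes_{min} C^*(\bb F_\infty) = C^*(\bb F_\infty) \otimes_{max} C^*(\bb F_\infty),
$$
which is just Theorem~\ref{thm F_2Z_2} applied with $\cl A = C^*(\bb F_\infty)$ together with Kirchberg's classical reformulation of his conjecture as ``$C^*(\bb F_\infty)$ has WEP''. The remaining task is to transfer the min-max equality on $C^*(\bb Z_2 * \bb Z_3) \otimes C^*(\bb Z_2 * \bb Z_3)$ down to $C^*(\bb F_\infty) \otimes C^*(\bb F_\infty)$ through $\phi$ and $\psi$. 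By functoriality of both $\otimes_{min}$ and $\otimes_{max}$, the maps $\phi \otimes \phi$ and $\psi \otimes \psi$ are ucp with respect to each of the two tensor norms, and their composition is the identity. Hence $\phi \otimes \phi$ is a complete order embedding into both $C^*(\bb Z_2 * \bb Z_3) \otimes_{min} C^*(\bb Z_2 * \bb Z_3)$ and $C^*(\bb Z_2 * \bb Z_3) \otimes_{max} C^*(\bb Z_2 * \bb Z_3)$; since (3) identifies these two targets as a single operator system, the min and max structures on $C^*(\bb F_\infty) \otimes C^*(\bb F_\infty)$ must coincide as well, giving (1).

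This last transfer step is the only place where anything is actually done, and it is the same factorization trick used in the proof of (3)~$\Rightarrow$~(2) of Theorem~\ref{thm F_2Z_2}, now applied symmetrically to both tensor legs rather than to a single one. No genuinely new obstacle arises, so I would expect the write-up to be short.
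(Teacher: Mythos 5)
Your proposal is correct and uses the same three ingredients as the paper (LLP of $C^*(\bb Z_2 * \bb Z_3)$, Theorem~\ref{thm F_2Z_2}, and the ucp factorization of the identity on $C^*(\bb F_\infty)$ through $C^*(\bb Z_2 * \bb Z_3)$); the only cosmetic difference is that you close the cycle as $(1)\Rightarrow(2)\Rightarrow(3)\Rightarrow(1)$ while the paper does $(1)\Rightarrow(3)\Rightarrow(2)\Rightarrow(1)$, transferring WEP itself through the retraction rather than the min--max tensor identity. Both transfers are instances of the same factorization trick, so the arguments are essentially identical.
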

\begin{proof}
(3)$\Rightarrow$(2): Follows from Theorem \ref{thm F_2Z_2}. (1)$\Rightarrow$(3): So every C*-algebra that has LLP
has WEP. In particular $C^*(\bb Z_2 * \bb Z_3)$ has WEP as it has LLP. Now (3)
follows form Example \ref{ex WEPLLP}. Finally we will prove (2)$\Rightarrow$(1). Since the identity on $C^*(\bb F_\infty)$
decomposes via ucp maps through  $C^*(\bb Z_2 * \bb Z_3)$ it follows that  $C^*(\bb F_\infty)$
has WEP, in other words, the Kirchberg conjecture has an affirmative answer.
\end{proof}

\begin{proof}[proof of Theorem \ref{thm 4dimKC}] 
Recall from last section that we can identify $\cq{5}$ with an operator subsystem of $C^*(\bb Z_2 * \bb Z_3)$
that contains enough unitaries.

(3)$\Rightarrow$(1): This is a result of Proposition \ref{pro enough}. So we have that
$$
C^*(\bb Z_2 * \bb Z_3) \otimes_{min} C^*(\bb Z_2 * \bb Z_3) =
C^*(\bb Z_2 * \bb Z_3) \otimes_{max} C^*(\bb Z_2 * \bb Z_3),
$$
in other words, the Kirchberg conjecture is true by the above proposition.

(1)$\Rightarrow$(2): Recall form the preliminaries that the Kirchberg conjecture is equivalent to statement that
every finite dimensional operator system that has the lifting property has DCEP. By Theorem \ref{thm QouhasLP}, $\cq{5}$
has the lifting property. So it must have DCEP.

(2)$\Rightarrow$(3): Since $\cq{5}$ has the lifting property it is (min,er)-nuclear. This readily shows that
$\cq{5} \otimes_{min} \cq{5} = \cq{5} \otimes_{er} \cq{5}$. Assuming (2) it is also (el,c)-nuclear.
(Applied to $\cq{5}$ on the right hand side) we get $\cq{5} \otimes_{er} \cq{5} = \cq{5} \otimes_{c} \cq{5}$.
So (3) follows.
\end{proof}

Several questions are in the order:

\begin{question}
Is $\cq{5} \otimes_{min} \cq{5} = \cq{5} \otimes_{max} \cq{5}$? 
\end{question}
\begin{question}
Is $(\cq{5} \otimes_{min} \cq{5})^+ = (\cq{5} \otimes_{max} \cq{5})^+\,$?
\end{question}
\begin{question}
Is $(\cq{5} \otimes_{min} \cq{5})^+ = (\cq{5} \otimes_{c} \cq{5})^+\,$?
\end{question}

\begin{rem}
Clearly if the first question is true then we have that the Kirchberg conjecture has an affirmative answer.
We put the second and the third questions just to emphasize the difficulty of this problem.
\end{rem}

\section{Further Properties of $\bb C^5 / J$ and Examples}

Letting $J = span\{(1,1,-1,-1,-1)\}$, the quotient operator system $\bb C^5/J$ has two important properties: 
A unital C*-algebra $\cl A$ has WEP if and only if we have the complete order isomorphism
$$
\cl A \otimes_{min} (\bb C^5 / J) = \cl A \otimes_{max} (\bb C^5 / J).
$$
Moreover, the Kirchberg conjecture has an affirmative answer if and only if we have
$$
(\bb C^5/J) \otimes_{min} (\bb C^5/J) = (\bb C^5/J) \otimes_{c} (\bb C^5/J)
$$
(equivalently $\bb C^5/J$ has DCEP ). Keeping these observation in mind it is essential to understand further nuclearity properties of
the four dimensional operator system $\bb C^5/J$. 
%From the last sections we readily have that $\bb C^5/J$ can be identified
%with an operator subsystem of  $C^*(\bb Z_2 * \bb Z_3)$ which contains enough unitaries. 
While Theorem \ref{thm QouhasLP} ensures that $\bb C^5/J$ has the lifting property we have:
\begin{proposition}
$\bb C^5/J$ is not exact.
\end{proposition}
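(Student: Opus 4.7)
The plan is to reduce non-exactness of $\bb C^5/J$ to the well-known non-exactness of a full group C*-algebra, via the identifications and stability results already established in the paper.

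First, by Theorem \ref{thm equiv} and Remark \ref{rem C5/J}, the operator system $\bb C^5/J$ is unitally completely order isomorphic to $\cl S = \mathrm{span}\{e,a,b,b^*\}\subset C^*(\bb Z_2 * \bb Z_3)$, where $\bb Z_2 * \bb Z_3 = \langle a,b : a^2 = b^3 = e\rangle$. Moreover, this copy contains enough unitaries in $C^*(\bb Z_2 * \bb Z_3)$, since $a,b,b^*$ are unitaries that generate the ambient C*-algebra.

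Next, by the proposition asserting that nuclearity properties pass from an operator subsystem containing enough unitaries to the ambient C*-algebra (applied to exactness), if $\bb C^5/J$ were exact then $C^*(\bb Z_2 * \bb Z_3)$ would also be exact. So it suffices to show that $C^*(\bb Z_2 * \bb Z_3)$ is not exact.

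For the last step, recall that the free group $\bb F_\infty$ embeds as a subgroup of $\bb Z_2 * \bb Z_3$. By the proposition quoted just before Theorem \ref{thm F_2Z_2}, the inclusion of subgroups induces a unital $*$-embedding $C^*(\bb F_\infty) \hookrightarrow C^*(\bb Z_2 * \bb Z_3)$ which admits a ucp left inverse; in other words, the identity on $C^*(\bb F_\infty)$ decomposes via ucp maps through $C^*(\bb Z_2 * \bb Z_3)$. The subsequent proposition (on ucp factorization preserving nuclearity) then shows that exactness of $C^*(\bb Z_2 * \bb Z_3)$ would imply exactness of $C^*(\bb F_\infty)$. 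But $C^*(\bb F_\infty)$ is not exact, by Wassermann's theorem, yielding the desired contradiction. The only real input from outside the paper is Wassermann's non-exactness of $C^*(\bb F_\infty)$; everything else is a direct assembly of the coproduct identification, the ``enough unitaries'' principle, and the ucp-factorization stability of exactness.
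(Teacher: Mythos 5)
Your proof is correct, and it is genuinely more self-contained than the paper's: the paper disposes of the key step by citing \cite[Rem. 11.6]{Kav2} for the non-exactness of $\cl S = \mathrm{span}\{e,a,b,b^*\}\subset C^*(\bb Z_2 * \bb Z_3)$, and then only invokes the complete order isomorphism $\bb C^5/J\cong\cl S$ of Remark \ref{rem C5/J}. You instead reassemble that missing step from ingredients already in the paper: the ``enough unitaries'' proposition (so exactness of $\cl S$ would force exactness of $C^*(\bb Z_2*\bb Z_3)$), the embedding $\bb F_\infty \leq \bb Z_2 * \bb Z_3$ giving a ucp-factorization of the identity of $C^*(\bb F_\infty)$ through $C^*(\bb Z_2*\bb Z_3)$, the stability of exactness under such factorizations, and finally Wassermann's theorem that $C^*(\bb F_\infty)$ is not exact. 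Each of these steps is explicitly stated in the paper (indeed, for the penultimate step you could even get by with the simpler fact, Proposition \ref{prop exac pass ss}, that exactness passes to operator subsystems, without needing the ucp inverse). Your route buys a proof whose only external input is Wassermann's non-exactness of $C^*(\bb F_\infty)$, at the cost of being slightly longer; it is essentially the same chain of reasoning the paper itself uses later for the more general statement that $\bb C^{k+m}/J_{k,m}$ is not exact for $k\geq 2$, $m\geq 3$.
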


\begin{proof}
By identifying $\bb Z_2 * \bb Z_3 = \langle a,b : a^2 = b^3 = e \rangle$ with its canonical representation in $C^*(\bb Z_2 * \bb Z_3)$
we have that $\cl S = span\{e,a,b,b^*\}$ is an operator subsystem of $C^*(\bb Z_2 * \bb Z_3)$. In \cite[Rem. 11.6]{Kav2} it was shown that
$\cl S$ is not exact. Since $\bb C^5/J$ and $\cl S$ are unitally and completely order isomorphic (Remark \ref{rem C5/J}), $\bb C^5/J$ is not exact.
\end{proof}

We start with the following positivity criteria. An element $x$ of an operator system $\cl S$ will be written
$x> 0 $ if $x\geq \epsilon e $ for some $\epsilon > 0$. Also, a positive element $y$ in an operator
system quotient $\cl S / J$ may not have a positive representation as it may obtain through the Archimedeanization
process. However, if $y > 0$ in $\cl S / J$ then $y - \epsilon e$ has a positive representation in $\cl S$
for some small $\epsilon > 0$.  

\begin{proposition}\label{prop pos-criteria}
Let $\cl S$ be an operator system. Take the basis $\{\dot{e}_1,\dot{e}_2,\dot{e}_3,\dot{e}_4\}$ for $\bb C^5/J$. Let
$$ 
u = s_1 \tens \dot{e}_1 + s_2 \tens \dot{e}_2 + s_3 \tens \dot{e}_3+ s_4 \tens \dot{e}_4 \in \cl S \otimes \bb C^5/J.
$$ Then:
\begin{enumerate}
 \item $u > 0$ in $ \cl S \otimes_{min} \bb C^5/J$ $\Longleftrightarrow$ $\cl S$ embeds in 
a larger operator system $\tilde{\cl S}$ ($\cl S \subset \tilde{\cl S}$) such that there is an $s \in\tilde{\cl S}^+$ with $s_1,s_2>s$ and  $s_3,s_4 \geq -s.$
 \item  $u > 0$ in $ \cl S \otimes_{c} \bb C^5/J$ $\Longleftrightarrow$ there is an $s \in C_u^*(\cl S)^+$ with $s_1,s_2>s$ and  $s_3,s_4 \geq -s.$ 
 \item  $u > 0$ in $ \cl S \otimes_{max} \bb C^5/J$ $\Longleftrightarrow$ there is an $s \in \cl S^+$ with $s_1,s_2>s$ and  $s_3,s_4 \geq -s.$
Moreover, if $\cl S$ is a finite dimensional then the strict inequalities $>$ can be taken $\geq$.
\end{enumerate}
%Moreover, if $\cl S$ is a finite dimensional operator system then all the strict inequalities $>$ can be taken $\geq$.
\end{proposition}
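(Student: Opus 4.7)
My plan is to address the three parts in the order (3), (1), (2), since the max case provides the concrete positivity criterion and the min and $c$ cases follow by embedding $\cl S$ into a larger operator system in which the relevant tensor product collapses to $\otimes_{max}$.

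For (3), I would invoke Theorem \ref{thm max-quotient} to identify
$$
\cl S \otimes_{max} (\bb C^5/J) = (\cl S \otimes_{max} \bb C^5)/(\cl S \otimes J),
$$
then use that $\bb C^5$ is a nuclear commutative C*-algebra, so $\cl S \otimes_{max} \bb C^5 \cong \cl S^5$ with entrywise matricial positive cone. A direct computation (using $\dot e_5 = \dot e_1 + \dot e_2 - \dot e_3 - \dot e_4$) shows that every preimage of $u$ in $\cl S^5$ has the form $(s_1+t,\,s_2+t,\,s_3-t,\,s_4-t,\,-t)$ for some $t \in \cl S$. Setting $s := -t$, the condition $u>0$ in the quotient amounts to the existence of such a lift that is (appropriately strictly) positive entrywise, i.e.\ $s \in \cl S^+$ with $s_1,s_2 > s$ and $s_3,s_4 \geq -s$. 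The mild asymmetry between strict and weak inequalities is forced by the Archimedeanization step in the quotient cone definition; a small shift of $t$ by a multiple of $e$ converts strict entrywise lift positivity into this asymmetric form and vice versa. When $\cl S$ is finite-dimensional the cones are closed, so the strict and weak characterizations coincide, which yields the final $>$-may-be-$\geq$ remark.

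For (1), I rely on the injectivity of $\otimes_{min}$: any embedding $\cl S \subseteq \tilde{\cl S}$ induces a complete order embedding $\cl S \otimes_{min} (\bb C^5/J) \subseteq \tilde{\cl S} \otimes_{min} (\bb C^5/J)$, so $u>0$ is detected on the smaller space iff on the larger. Choosing $\tilde{\cl S}$ injective (say $I(\cl S)$), injectivity delivers WEP and Theorem \ref{thm WEPcri} collapses $\otimes_{min}$ to $\otimes_{max}$ on the tensor with $\bb C^5/J$; applying (3) then produces the required $s \in \tilde{\cl S}^+$. Conversely, given any $\tilde{\cl S} \supseteq \cl S$ and $s \in \tilde{\cl S}^+$ meeting the conditions, (3) in $\tilde{\cl S}$ gives $u>0$ in $\tilde{\cl S} \otimes_{max} (\bb C^5/J)$, hence in $\tilde{\cl S} \otimes_{min} (\bb C^5/J)$ since $\min \leq \max$, and finally in $\cl S \otimes_{min} (\bb C^5/J)$ by injectivity of min.

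For (2), I reduce to (3) applied to the universal C*-algebra $C^*_u(\cl S)$, using the identity $C^*_u(\cl S) \otimes_c (\bb C^5/J) = C^*_u(\cl S) \otimes_{max} (\bb C^5/J)$ (property 3 of $\otimes_c$). The key technical step is that the canonical ucp map $\cl S \otimes_c (\bb C^5/J) \to C^*_u(\cl S) \otimes_c (\bb C^5/J)$ is a complete order embedding. Given any ucp pair $\phi: \cl S \to B(H)$ and $\psi: \bb C^5/J \to B(H)$ with commuting ranges, extend $\phi$ to the unique *-homomorphism $\tilde\phi : C^*_u(\cl S) \to B(H)$; the range of $\tilde\phi$ is the C*-algebra generated by $\phi(\cl S)$, hence still commutes with $\psi(\bb C^5/J)$. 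The induced joint maps $\phi \cdot \psi$ and $\tilde\phi \cdot \psi$ agree on the algebraic tensor, so positivity transfers in both directions at every matricial level, and (3) applied to $C^*_u(\cl S)$ delivers the required $s \in C^*_u(\cl S)^+$. The main obstacle I anticipate is the careful bookkeeping of strict versus weak inequalities through the Archimedeanization that defines the quotient cone in (3); once handled there, these asymmetries transport cleanly through (1) and (2). A secondary technical point is verifying the complete order embedding in (2) — since $\otimes_c$ is not known to be injective in general — but the *-homomorphism extension argument above bypasses this by directly matching the defining families of commuting ucp pairs.
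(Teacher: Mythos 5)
Your proposal is correct and takes essentially the same route as the paper: part (3) via the projectivity of $\otimes_{max}$ and the explicit lift computation using $\dot e_5=\dot e_1+\dot e_2-\dot e_3-\dot e_4$, and parts (1) and (2) by reduction to (3) through a superspace on which the relevant tensor product collapses to $\otimes_{max}$ (the paper uses $B(H)$ together with the lifting property of $\bb C^5/J$ where you use $I(\cl S)$ together with WEP, and it cites the embedding $\cl S\otimes_{c}\cl T\subseteq C^*_u(\cl S)\otimes_{max}\cl T$ from its preliminaries where you re-derive it). The only point to tighten is the finite-dimensional addendum in (3), which is properly justified by the complete proximinality of the null-subspace $\cl S\otimes J$ (Theorem \ref{thm max-quotient}) rather than by a generic ``the cones are closed'' remark.
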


\begin{proof}
(3): 
This is really
based on the projectivity of the maximal tensor product:
$$
(\cl S  \otimes_{max} \bb C^5)/(\cl S \otimes J) = \cl S  \otimes_{max} \cq{5}.
$$
Also, in $\cq{5}$ we have $\dot{e}_1+ \dot{e}_2 - \dot{e}_3 - \dot{e}_4 - \dot{e}_5 = 0$. Thus,
$\dot{e} = \dot{e}_1+ \dot{e}_2+\dot{e}_3+\dot{e}_4+\dot{e}_5 =  2\dot{e}_1+ 2\dot{e}_2$. Now
$u > 0   $ if and only if $ u  - \epsilon( e \tens \dot{e})$
has a positive representative in $\cl S  \otimes_{max} \bb C^5$, say $x_1\tens e_1 + x_2\tens e_2 + x_3\tens e_3 + x_4\tens e_4 + x_5\tens e_5$,
for some small $\epsilon>0$. 
Note that each of $x_i$ has to be a positive element in $\cl S$.
This means that
$$
s_1 \tens \dot{e}_1 + s_2 \tens  \dot{e}_2 + s_3 \tens  \dot{e}_3 + s_4 \tens  \dot{e}_4 - \epsilon( e \tens \dot{e}) =
x_1\tens \dot{e}_1 + x_2\tens \dot{e}_2 + x_3\tens \dot{e}_3 + x_4\tens \dot{e}_4 + x_5\tens \dot{e}_5.
$$
Now using the fact that $\dot{e}_5 = \dot{e}_1+ \dot{e}_2 - \dot{e}_3 - \dot{e}_4$ and $\dot{e}  = 2\dot{e}_1+ 2\dot{e}_2$ we obtain following
equalities:
\begin{eqnarray*}
s_1 & = & x_1 + x_5 + 2\epsilon e \\
s_2 & = & x_2 + x_5 + 2\epsilon e \\
s_3 & = & x_3 - x_5  \\
s_4 & = & x_4 - x_5.
\end{eqnarray*}
Finally putting $s = x_5$ we clearly get $s_1,s_2 > s$ and $s_3,s_4 \geq -s$ with $s \in S^+$. Note that given such elements
we can reconstruct positive elements $x_1,...,x_5$ such that the reverse direction follows. This proves the first part of (3). 
The additional part follows from the fact that $  \cl S \otimes J \subset \cl S \otimes_{max} \bb C^5$ is a completely proximinal kernel (Theorem \ref{thm max-quotient}). So every
positive element in the quotient has a positive representative in $\cl S \otimes_{max} \bb C^5$.

$ $

(1): It is enough to take $\tilde{\cl S} = B(H)$. Recall that $\bb C^{5}/J$ has the lifting
property thus we have that
$$
B(H) \otimes_{min} (\bb C^{5}/J) = B(H) \otimes_{max} (\bb C^{5}/J).
$$
Consequently, by the injectivity if the minimal tensor product, we have the embedding 
$$
\cl S \otimes_{min} (\bb C^{5}/J) \subset B(H) \otimes_{min} (\bb C^{5}/J) =  B(H) \otimes_{max} (\bb C^{5}/J).
$$
So $u > 0$ in $\cl S \otimes_{min} (\bb C^{5}/J)$ if and only if $u>0$ in $B(H) \otimes_{max} (\bb C^{5}/J)$.
Now by part (3), this is equivalent to existence of an element $s \in B(H)^+$ with
 $s_1,s_2>s$ and  $s_3,s_4 \geq -s.$ This proves (1). 
$ $

(2): From the preliminary section we have that $\cl S \otimes_{c} (\bb C^{5}/J) \subset C_u^*(\cl S) \otimes_{max} (\bb C^{5}/J)$.
Thus, $u > 0 $ in $\cl S \otimes_{c} (\bb C^{5}/J)$ if and only if $u > 0$ in $C^*(\cl S) \otimes_{max} (\bb C^{5}/J)$. Again by using
(3), the latter statement is equivalent to existence of a positive element $s$ being in $C_u^*(\cl S) $ such that
$s_1,s_2>s$ and  $s_3,s_4 \geq -s.$
\end{proof}

The sharp contrast between (1) and (3)
allows us to construct low dimensional operator systems such that the minimal and the maximal
tensor product with $\bb C^5/J$ don't coincide:

\begin{example}
$\bb C^5$ has a three dimensional operator subsystem, say $\cl S$, such that 
$$
\cl S  \otimes_{min} (\bb C^5 /J) \neq \cl S \otimes_{max} (\bb C^5 /J) .
$$
In fact for small $\epsilon>0$ consider
$$
\cl S = span\{\underbrace{(1,1,1,1,1)}_{e},\underbrace{(0,\epsilon, 1/2,1-\epsilon,1)}_{a},\underbrace{(1,0,-1/2,0,1)}_{b}\}\subset \bb C^5.
$$
We claim that $e \tens \dot{e}_1 + a \tens  \dot{e}_2 + b \tens  \dot{e}_3 + b \tens  \dot{e}_4 $ is positive in $\cl S  \otimes_{min} \cq{5}$
but not positive in $\cl S  \otimes_{max} \cq{5}$. First note that $c = (0,\epsilon,1/2,\epsilon,0) \in \bb C^5$ is positive such that
$$
e,a \geq c \mbox{ and } b \geq - c.
$$
So by the above positivity criteria ((3), additional part) we get $e \tens \dot{e}_1 + a \tens  \dot{e}_2 + b \tens  \dot{e}_3 + b \tens  \dot{e}_4 $
is positive in $\bb C^5 \otimes_{max} \cq{5} $. Since $\cl S  \otimes_{min} \cq{5}  \subset  \bb C^5 \otimes_{min} \cq{5} = \bb C^5 \otimes_{max} \cq{5}$,
it is a positive element in $\cl S  \otimes_{min} \cq{5}$. On the other hand there is no element $h \in \cl S^+$ such that $e,a \geq h$ and $b\geq -h$.
In fact we necessarily have that $h = \alpha e + \beta a + \theta b$. We leave the verification of this fact to the reader.
\end{example}

\begin{example} Let $\cl S$ be the operator system in the above example. Let $y = (1,1,-1,-1,-1)$. 
Consider the four dimensional operator subsystem $\cl T = span\{e,a,b,y\} \subset \bb C^5$. We still have
$$
\cl T  \otimes_{min} \cq{5} \neq \cl T \otimes_{max} \cq{5}.
$$
In a similar fashion we can show that  $e \tens \dot{e}_1 + a \tens  \dot{e}_2 + b \tens  \dot{e}_3 + b \tens  \dot{e}_4 $ is positive in $\cl T  \otimes_{min} \cq{5}$
but not positive in $\cl T  \otimes_{max} \cq{5}$. It is elementary to see that $\cl T\!/\!J \subset \cq{5}$. (In fact,
in general, whenever $J\subset \cl S \subset \tilde{\cl S}$, where $J$ is a kernel in $ \tilde{\cl S}$, then the induced map $\cl S / J \rightarrow  \tilde{\cl S}/J$
is a complete order embedding.) Thus we get
$$
\cl T\!/\!J \otimes_{min} \cq{5} \subset \cq{5} \otimes_{min} \cq{5}.
$$
So we direct the following question:
$$
\mbox{ Is }
\cl T\!/\!J \otimes_{min} \cq{5} =  T\!/\!J \otimes_{max} \cq{5}?
$$
\end{example}

\begin{example}
There are self-adjoint elements $s_1,s_2,s_3,s_4$ in the Calkin algebra $\bb B / \bb K$ such that
for every representation $\bb B / \bb K \subset B(K)$, where $K$ is a Hilbert space, there is a positive
element $s \in  B(K)$ with $s_1,s_2 > s$ and $s_3, s_4 \geq -s$ but there is no positive element in $\bb B / \bb K$
with these properties. This is based on the fact that $\bb B / \bb K$ does not have WEP. (The reader may refer to
\cite{Kav2} for a proof of this well-known fact.)
Thus we have
$$
\bb B / \bb K \otimes_{min} (\bb C^{5}/J) \neq \bb B / \bb K \otimes_{max} (\bb C^{5}/J).
$$
By using the C*-algebraic identification $M_n(\bb B / \bb K) \cong \bb B / \bb K$,
 this inequality fails through an element at the ground level. Now the positivity criteria (1) and (3)
imply that such elements exists in $\bb B / \bb K$. (Also note that if
$s_1,s_2 > s$ and $s_3, s_4 \geq -s$ for some element $s \in B(K)^+$ then for every
representation  $\bb B / \bb K \subset B(\tilde{K})$ there is a positive element
in $\tilde{s}$ in $B(\tilde{K})$ with these properties. This
follows from Arveson's extension theorem.)
\end{example}

We turn back to the positivity characterization given in Proposition \ref{prop pos-criteria}. Another variant can be given as follows
which will have a prominent role when we study separation properties in the next section:

\begin{proposition}
Let $\cl S$ be an operator system and $s_1,...,s_5$ be self-adjoint elements of $\cl S$. Then
there is a self-adjoint element $s \in \cl S$ such that $s_3,s_4,s_5 < s < s_1,s_2$ if and only if
$$
u = s_1 \tens \dot{e}_1 + s_2 \tens \dot{e}_2  -s_3 \tens \dot{e}_3  -s_4 \tens \dot{e}_4  -s_5 \tens \dot{e}_5 > 0
$$
in $\cl S \otimes_{max} (\bb C^5/J)$.
\end{proposition}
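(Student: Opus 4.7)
The plan is to reduce the statement to Proposition~\ref{prop pos-criteria}(3) by rewriting $u$ in the chosen basis $\{\dot{e}_1,\dot{e}_2,\dot{e}_3,\dot{e}_4\}$ of $\bb C^5/J$. Using the defining relation $\dot{e}_5 = \dot{e}_1 + \dot{e}_2 - \dot{e}_3 - \dot{e}_4$, one expands
$$
u = (s_1 - s_5) \otimes \dot{e}_1 + (s_2 - s_5) \otimes \dot{e}_2 + (s_5 - s_3) \otimes \dot{e}_3 + (s_5 - s_4) \otimes \dot{e}_4.
$$
Proposition~\ref{prop pos-criteria}(3) then translates the condition $u > 0$ in $\cl S \otimes_{max} (\bb C^5/J)$ into the existence of some $x \in \cl S^+$ with
$$
s_1 - s_5 > x,\quad s_2 - s_5 > x,\quad s_5 - s_3 \geq -x,\quad s_5 - s_4 \geq -x.
$$
After the change of variables $s := s_5 + x$, these four inequalities become $s_1, s_2 > s$ (strict), together with $s_3, s_4, s_5 \leq s$ (non-strict, since $x \geq 0$).

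Given this dictionary, the reverse direction is immediate: starting from an interpolant $s$ with $s_3, s_4, s_5 < s < s_1, s_2$, the element $x := s - s_5 \in \cl S^+$ satisfies the four inequalities by inspection, so Proposition~\ref{prop pos-criteria}(3) yields $u > 0$ in $\cl S \otimes_{max} (\bb C^5/J)$.

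The forward direction needs one further step, and I expect this to be the only real obstacle: Proposition~\ref{prop pos-criteria}(3) produces only the non-strict inequalities $s_3, s_4, s_5 \leq s$, whereas the theorem asks for strict separation. I handle this by a slack argument. Since $s_1 - s, s_2 - s$ are both strictly positive, there is $\delta > 0$ with $s_i - s \geq \delta e$ for $i = 1, 2$. Replacing $s$ by $\tilde{s} := s + (\delta/2)\, e$ gives
$$
s_i - \tilde{s} \geq (\delta/2)\, e > 0 \quad (i = 1, 2), \qquad \tilde{s} - s_j = (s - s_j) + (\delta/2)\, e \geq (\delta/2)\, e > 0 \quad (j = 3, 4, 5),
$$
so $s_3, s_4, s_5 < \tilde{s} < s_1, s_2$ with $\tilde{s}$ self-adjoint, which completes the argument. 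The whole proof is essentially a rebasing of Proposition~\ref{prop pos-criteria}(3) combined with this mild perturbation, and I do not anticipate any further substantive difficulty.
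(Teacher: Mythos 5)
Your proposal is correct and follows essentially the same route as the paper: rewrite $u$ in the basis $\{\dot{e}_1,\dot{e}_2,\dot{e}_3,\dot{e}_4\}$ via $\dot{e}_5 = \dot{e}_1+\dot{e}_2-\dot{e}_3-\dot{e}_4$, apply Proposition \ref{prop pos-criteria}(3), and pass between $x$ and $s = s_5 + x$. The only difference is that you spell out the final $\epsilon$-perturbation explicitly where the paper merely asserts it, which is a harmless (indeed welcome) elaboration.
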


\begin{proof}
In $\bb C^5$ we have $\dot{e}_5 = \dot{e}_1 + \dot{e}_2- \dot{e}_3 -\dot{e}_4$. Therefore we can rewrite $u$ as
$$
u = (s_1 -  s_5) \tens \dot{e}_1 + (s_2 -  s_5) \tens \dot{e}_2 + ( - s_3 +  s_5) \tens \dot{e}_3 + (-s_4 +  s_5) \tens \dot{e}_4.
$$
Recall from Proposition \ref{prop pos-criteria} that $ u > 0$ if and only if there is an element $t \in \cl S^+$ such that
$$
s_1 -  s_5 > t,\;\; s_2 -  s_5> t, \;\;   - s_3 +  s_5 \geq -t, \;\; -s_4 +  s_5 \geq -t.
$$
Setting $s = s_5 + t$, we obtain a self-adjoint element such that
$$
s_1 > s,\;\; s_2 > s,\;\; -s_3 \geq -s ,\;\; -s_4 \geq -s,\;\; -s_5 \geq - s,
$$
equivalently $s_3,s_4,s_5 \leq s < s_1,s_2$. Consequently we obtain that $u> 0$ if and only if the latter condition
holds. It is easy to see that the latter condition is equivalent to existence of a self-adjoint where we can take the
inequalities strict (by a small $\epsilon$-perturbation of $s$ if necessary). This finishes the proof.
\end{proof}

So far we have worked with the four dimensional operator system $C^5 / J$. Similar results and positivity
criteria can be extended to a more general setting. For positive integers $k$ and $m$ we define 
$$
J_{k,m} = span\{(\underbrace{1,...,1}_{k-terms},\underbrace{-1,...,-1}_{m-terms})\} \subset \bb C^{k+m}.
$$
Before going into details we simply recall a couple of properties. $J_{k,m}$ is a one-dimensional
 null-subspace of $ \bb C^{k+m}$ so it is a completely
proximinal kernel. Moreover the quotient  $ \bb C^{k+m} / J_{k,m}$ has the lifting property as the lifting
property is stable under quotients by null-subspaces. Also by Theorem \ref{thm equiv} the following
operator systems
\begin{enumerate}
 \item $ \bb C^{k+m} / J_{k,m}$;
 \item $ \bb C^{k} \oplus_{1}  \bb C^{m}$;
 \item  $span\{e,a,a^2,...,a^{k-1}, b,b^2,...,b^{m-1}\} \subset C^*(\bb Z_{k} * \bb Z_m)$
\end{enumerate}
are unitally and completely order isomorphic.

\begin{lemma}
Suppose $k \leq k_1$ and $m \leq m_1$. Then $ \bb C^{k+m} / J_{k,m}$ can be identified
with an operator subsystem of $ \bb C^{k_1+m_1} / J_{k_1,m_1}$, moreover, this inclusion
has a ucp inverse.
\end{lemma}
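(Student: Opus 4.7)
The plan is to exploit the coproduct realization $\bb C^{k+m}/J_{k,m}\cong \bb C^{k}\oplus_{1}\bb C^{m}$ from Theorem \ref{thm equiv} and then leverage the universal property of the coproduct (Proposition \ref{pro SoplusT} / the defining diagram of $\oplus_{1}$) together with the elementary fact that commutative C*-algebras $\bb C^{k}\subset \bb C^{k_1}$ admit both a unital $*$-homomorphic embedding and a unital $*$-homomorphic left inverse whenever $k\leq k_{1}$. With this in hand, the functoriality of the coproduct produces ucp maps between the two operator systems that compose to the identity, and a standard argument forces the forward map to be a complete order embedding.

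More concretely, I would first fix a unital $*$-embedding $i:\bb C^{k}\hookrightarrow \bb C^{k_{1}}$, for instance $(x_{1},\dots,x_{k})\mapsto(x_{1},\dots,x_{k-1},x_{k},\dots,x_{k})$ with $k_{1}-k+1$ copies of $x_{k}$, and the unital $*$-homomorphism $p:\bb C^{k_{1}}\to \bb C^{k}$ obtained by projecting onto a transversal of the corresponding partition of $\{1,\dots,k_{1}\}$, so that $p\circ i=\id_{\bb C^{k}}$. Analogously fix $j:\bb C^{m}\hookrightarrow \bb C^{m_{1}}$ and $q:\bb C^{m_{1}}\to \bb C^{m}$ with $q\circ j=\id_{\bb C^{m}}$. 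Writing $\iota_{1},\iota_{2}$ for the canonical inclusions of $\bb C^{k},\bb C^{m}$ into $\bb C^{k}\oplus_{1}\bb C^{m}$, and $\iota_{1}',\iota_{2}'$ for those of $\bb C^{k_{1}},\bb C^{m_{1}}$ into $\bb C^{k_{1}}\oplus_{1}\bb C^{m_{1}}$, the universal property of the coproduct applied to the ucp pair $(\iota_{1}'\circ i,\ \iota_{2}'\circ j)$ yields a ucp map
$$
\Phi:\bb C^{k}\oplus_{1}\bb C^{m}\longrightarrow \bb C^{k_{1}}\oplus_{1}\bb C^{m_{1}},
$$
and applied to $(\iota_{1}\circ p,\ \iota_{2}\circ q)$ yields a ucp map
$$
\Psi:\bb C^{k_{1}}\oplus_{1}\bb C^{m_{1}}\longrightarrow \bb C^{k}\oplus_{1}\bb C^{m}.
$$

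To see that $\Psi\circ \Phi=\id$, observe that both $\Psi\circ\Phi$ and $\id$ are ucp maps on $\bb C^{k}\oplus_{1}\bb C^{m}$, and their compositions with $\iota_{1}$ and $\iota_{2}$ agree: indeed $\Psi\circ\Phi\circ\iota_{1}=\Psi\circ\iota_{1}'\circ i=\iota_{1}\circ p\circ i=\iota_{1}$, and similarly on the $\bb C^{m}$ side. The uniqueness clause in the universal property of $\oplus_{1}$ forces $\Psi\circ\Phi=\id$. Finally, the existence of a ucp left inverse $\Psi$ automatically promotes $\Phi$ to a unital complete order embedding, since for any $n$ and any self-adjoint $x\in M_{n}(\bb C^{k}\oplus_{1}\bb C^{m})$ with $\Phi^{(n)}(x)\geq 0$, applying $\Psi^{(n)}$ recovers $x=\Psi^{(n)}(\Phi^{(n)}(x))\geq 0$. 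Transporting along the identifications with $\bb C^{k+m}/J_{k,m}$ and $\bb C^{k_{1}+m_{1}}/J_{k_{1},m_{1}}$ gives exactly the inclusion with ucp inverse claimed in the lemma. There is no real obstacle here; the only point requiring care is bookkeeping the uniqueness in the coproduct's universal property to collapse $\Psi\circ\Phi$ to the identity, and this is immediate once the two restrictions to $\bb C^{k}$ and $\bb C^{m}$ are matched.
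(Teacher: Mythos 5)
Your proof is correct, and it reaches the same pair of maps as the paper by a different route. The paper works directly with the quotient presentation: it defines the coordinate--duplication embedding $\bb C^{k+m}\hookrightarrow\bb C^{k_1+m_1}$ and the transversal projection back, checks that each composition with the relevant quotient map kills $J_{k,m}$ (resp.\ contains $J_{k_1,m_1}$ in its kernel), and invokes the compatibility of operator system quotients with cp maps to induce $\bar i$ and $\bar q$ on the quotients, finally verifying $\bar q\circ\bar i=\id$ by direct computation. You instead pass through the identification $\bb C^{k+m}/J_{k,m}\cong\bb C^{k}\oplus_1\bb C^{m}$ of Theorem \ref{thm equiv} and assemble the two maps from their restrictions to the summands via the universal property of the coproduct, using its uniqueness clause to collapse $\Psi\circ\Phi$ to the identity. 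What your version buys is that all kernel bookkeeping disappears: you only need to know that unital $*$-homomorphisms $\bb C^{k}\rightleftarrows\bb C^{k_1}$ with $p\circ i=\id$ exist, and the coproduct's universal property does the rest; it also makes clear that the statement holds for any coproduct of operator systems admitting such split inclusions of the summands. What the paper's version buys is an explicit formula for the embedding at the level of $\bb C^{k_1+m_1}$, which is what gets used later when computing positivity criteria. Your final step --- that a ucp map with a ucp left inverse is automatically a unital complete order embedding --- is the same observation the paper uses elsewhere (e.g.\ in the proof of Theorem \ref{thm F_2Z_2}), and is applied correctly here.
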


\begin{proof}
Let $i$ is the embedding of $ \bb C^{k+m}$ into $\bb C^{k_1+m_1} $ given by
$$
(a_1,...,a_k,b_1,...,b_m) \mapsto (\underbrace{a_1,...,a_1}_{k_1-k \; terms},a_1,a_2,...,a_k,b_1,...,b_m,\underbrace{b_m,...,b_m}_{m_1-m \; terms})
$$
then the composition $\bb C^{k+m} \rightarrow \bb C^{k_1+m_1} \rightarrow  \bb C^{k_1+m_1} / J_{k_1,m_1}$ has the kernel $J_{m,k}$. So the
induced map $\bar{i} : \bb C^{k+m} / J_{k,m} \rightarrow \bb C^{k_1+m_1} / J_{k_1,m_1}$ is ucp. Likewise consider the
projection $q$  from $\bb C^{k_1+m_1} $ onto $ \bb C^{k+m}$ given by
$$
(a_1,...,a_{k_1},b_1,...,b_{m_1}) \mapsto (a_{k_1 - k+1},...,a_{k_1}, b_1,...,b_m).
$$
Since the composition of the ucp maps $\bb C^{k_1+m_1} \rightarrow \bb C^{k+m} \rightarrow \bb C^{k+m}/J_{k,m}$  contains  $ J_{k_1,m_1}$ in its kernel
it follows that the induced map  $\bar{q} : \bb C^{k_1+m_1} / J_{k_1,m_1} \rightarrow \bb C^{k+m} / J_{k,m}$ is ucp. It is elementary to verify
that the composition
$$
 \bb C^{k+m} / J_{k,m} \xrightarrow{\bar{i}} \bb C^{k_1+m_1} / J_{k_1,m_1}  \xrightarrow{\bar{q}} \bb C^{k+m} / J_{k,m}
$$
is the identity on $\bb C^{k+m} / J_{k,m}$. This shows that $\bar{i}$ is an embedding with ucp inverse $\bar{q}$.
\end{proof}

\begin{corollary}
If $2 \leq k$ and $3 \leq m$, $\bb C^{k+m} / J_{k,m}$ is not exact. So $C^*(\bb Z_k * \bb Z_m)$ is not exact. 
\end{corollary}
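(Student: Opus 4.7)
The plan is to bootstrap from the previous proposition (which established that $\bb C^5/J = \bb C^{2+3}/J_{2,3}$ is not exact) to the general case via the embedding lemma just proved, and then pass from the operator subsystem to the ambient C*-algebra using the stability of exactness under operator subsystems.

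First I would observe that the preceding lemma produces, for any $k \geq 2$ and $m \geq 3$, a pair of ucp maps $\bar i : \bb C^{5}/J_{2,3} \to \bb C^{k+m}/J_{k,m}$ and $\bar q : \bb C^{k+m}/J_{k,m} \to \bb C^{5}/J_{2,3}$ whose composition $\bar q \circ \bar i$ is the identity. Since the composition of ucp maps being a complete order embedding forces the first factor to be a complete order embedding, $\bar i$ identifies $\bb C^{5}/J_{2,3}$ with an operator subsystem of $\bb C^{k+m}/J_{k,m}$.

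Next I would invoke Proposition \ref{prop exac pass ss} (exactness passes to operator subsystems): if $\bb C^{k+m}/J_{k,m}$ were exact, then its operator subsystem $\bb C^{5}/J_{2,3}$ would be exact, contradicting the preceding proposition. Hence $\bb C^{k+m}/J_{k,m}$ is not exact. Alternatively one could deduce this directly from the stronger ucp-factorization proposition (any nuclearity property of $\bb C^{k+m}/J_{k,m}$ would pass to $\bb C^{5}/J_{2,3}$), but the subsystem argument is cleaner.

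For the final assertion about $C^*(\bb Z_k * \bb Z_m)$, I would use Theorem \ref{thm equiv}, which exhibits $\bb C^{k+m}/J_{k,m}$ as a (unital complete order embedding into a) concrete operator subsystem of $C^*(\bb Z_k * \bb Z_m)$. Applying Proposition \ref{prop exac pass ss} once more: if $C^*(\bb Z_k * \bb Z_m)$ were exact, then its operator subsystem $\bb C^{k+m}/J_{k,m}$ would be exact, contradicting the first half of the corollary. There is no real obstacle here; everything reduces mechanically to the single hard input that $\bb C^5/J$ is not exact (which in turn was inherited from Wassermann's result via the identification $\bb C^5/J \cong \mathrm{span}\{e,a,b,b^*\} \subset C^*(\bb Z_2 * \bb Z_3)$ recorded in Remark \ref{rem C5/J}).
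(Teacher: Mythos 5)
Your proof is correct and follows essentially the same route as the paper: embed $\bb C^5/J_{2,3}$ into $\bb C^{k+m}/J_{k,m}$ via the preceding lemma and invoke the stability of exactness under passage to operator subsystems (Proposition \ref{prop exac pass ss}), then repeat the argument for the inclusion of $\bb C^{k+m}/J_{k,m}$ into $C^*(\bb Z_k*\bb Z_m)$ furnished by Theorem \ref{thm equiv}. The only divergence is in the second assertion, where the paper cites Proposition \ref{pro enough} (the enough-unitaries transfer result); your appeal to Proposition \ref{prop exac pass ss} is the logically correct direction --- one needs ``the ambient object exact $\Rightarrow$ the subsystem exact,'' not the converse --- so your reference is in fact the more accurate one.
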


\begin{proof}
Otherwise $\bb C^{5} / J_{2,3}$ is exact as it embeds in $\bb C^{k+m} / J_{k,m}$ and exactness is stable when passing to operator
subsystems. The second part follows from Proposition \ref{pro enough}.
\end{proof}

Despite this $\bb C^4/J_{2,2}$ is C*-nuclear which we will prove soon. In the following we give a positivity criteria in $-\otimes_{max} \bb C^{k+m} / J_{k,m}$.

\begin{proposition}
Let $\cl S$ be an operator system. We take the basis $\dot{e}_1,....,\dot{e}_{k+m-1}$ for $\bb C^{k+m} / J_{k,m}$. Then
an element
$$
u = s_1 \tens \dot{e}_1 + \cdots + s_{k+m-1} \tens \dot{e}_{k+m-1} > 0
$$ 
in $\cl S \otimes_{max} (\bb C^{k+m} / J_{k,m})$ if and only if there is an element $s \in \cl S^+$ such that
$$
s < s_1,...,s_k \mbox{ and } -s \leq s_{k+1},..., s_{k+m-1}.
$$
\end{proposition}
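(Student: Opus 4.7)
The plan is to adapt the argument of Proposition \ref{prop pos-criteria}(3) verbatim, generalizing from $(k,m)=(2,3)$ to arbitrary $k,m$. The essential ingredient is Theorem \ref{thm max-quotient} applied to the null-subspace $J_{k,m}\subset \bb C^{k+m}$: since $\dim(\bb C^{k+m})<\infty$, we obtain the identification
$$
\cl S\otimes_{max}(\bb C^{k+m}/J_{k,m}) \;=\; (\cl S\otimes_{max}\bb C^{k+m})\big/(\cl S\otimes J_{k,m}),
$$
and the kernel on the right is completely proximinal. In particular any positive element of the quotient lifts to a positive element of $\cl S\otimes_{max}\bb C^{k+m}$, which, since $\bb C^{k+m}$ has the ``coordinate'' basis and max coincides with min on $\bb C^{k+m}$, is precisely an element of the form $\sum_{i=1}^{k+m} x_i\tens e_i$ with each $x_i\in \cl S^+$.

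First I would record the relations in the quotient. Writing the defining relation of $J_{k,m}$ as
$$
\dot e_1+\cdots+\dot e_k \;=\; \dot e_{k+1}+\cdots+\dot e_{k+m},
$$
one sees that $\dot e_{k+m}=\dot e_1+\cdots+\dot e_k-\dot e_{k+1}-\cdots-\dot e_{k+m-1}$, so $\{\dot e_1,\dots,\dot e_{k+m-1}\}$ is a basis, and the unit of the quotient satisfies $\dot e=2(\dot e_1+\cdots+\dot e_k)$. Now $u>0$ in $\cl S\otimes_{max}(\bb C^{k+m}/J_{k,m})$ if and only if $u-\varepsilon(e\tens\dot e)\geq 0$ for some $\varepsilon>0$, which by complete proximinality is equivalent to the existence of $x_1,\dots,x_{k+m}\in \cl S^+$ with
$$
u-\varepsilon(e\tens \dot e)\;=\;\sum_{i=1}^{k+m} x_i\tens\dot e_i
$$
in $\cl S\otimes_{max}(\bb C^{k+m}/J_{k,m})$. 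Substituting the expression for $\dot e_{k+m}$ and $\dot e$ and comparing coefficients in the basis $\{\dot e_1,\dots,\dot e_{k+m-1}\}$ yields the linear system
$$
s_i \;=\; x_i+x_{k+m}+2\varepsilon e \quad(1\leq i\leq k),\qquad s_i \;=\; x_i-x_{k+m}\quad(k+1\leq i\leq k+m-1).
$$

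Setting $s:=x_{k+m}\in \cl S^+$, these equations are equivalent to $s_i-s\geq 2\varepsilon e$ for $i=1,\dots,k$ and $s_i+s\geq 0$ for $i=k+1,\dots,k+m-1$, i.e.\ exactly $s<s_1,\dots,s_k$ and $-s\leq s_{k+1},\dots,s_{k+m-1}$. Conversely, given such an $s$, setting $x_{k+m}:=s$, $x_i:=s_i-s-2\varepsilon e$ for $i\leq k$ (for $\varepsilon$ small enough these remain positive since $s_i-s\geq\delta e$ for some $\delta>0$), and $x_i:=s_i+s$ for $k<i<k+m$ gives a positive lift of $u-\varepsilon(e\tens\dot e)$, proving $u>0$. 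This is essentially a bookkeeping exercise once the relations in the quotient are sorted out, so I do not expect any real obstacle; the only mild subtlety is keeping track of strict versus non-strict inequalities, which is handled cleanly by letting $\varepsilon>0$ absorb the strictness on the first $k$ coordinates while the remaining $m-1$ coordinates need only $\geq$.
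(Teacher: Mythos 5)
Your proof is correct and follows essentially the same route as the paper's: projectivity of the maximal tensor product applied to the null-subspace $J_{k,m}$, the relations $\dot e_{k+m}=\dot e_1+\cdots+\dot e_k-\dot e_{k+1}-\cdots-\dot e_{k+m-1}$ and $\dot e=2(\dot e_1+\cdots+\dot e_k)$, coefficient comparison against a positive lift $\sum x_i\tens e_i$, and the substitution $s=x_{k+m}$ with the same reverse reconstruction. The only cosmetic point is that you do not actually need complete proximinality of $\cl S\otimes J_{k,m}$ (which Theorem \ref{thm max-quotient} guarantees only when $\dim\cl S<\infty$): since you work with $u-\varepsilon(e\tens\dot e)$ for \emph{some} $\varepsilon>0$, shrinking $\varepsilon$ already yields a genuinely positive representative, which is exactly how the paper argues.
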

\begin{proof}
The proof is similar to that of Proposition \ref{prop pos-criteria}.
We again use the projectivity of the maximal tensor product:
$$
(\cl S  \otimes_{max} \bb C^{k+m})/(\cl S \otimes J_{k,m}) = \cl S  \otimes_{max} (\bb C^{k+m} / J_{k,m}).
$$
In $\bb C^{k+m} / J_{k,m}$ we have $\dot{e}_{k+m} = \dot{e}_1+\cdots + \dot{e}_k - \dot{e}_{k+1} -\cdots - \dot{e}_{k+m-1}$. Consequently we obtain
$$
\dot{e} = \dot{e}_1+\cdots +\dot{e}_{k+m} = 2 \dot{e}_1+\cdots+ 2\dot{e}_k.
$$
Now $u > 0   $ if and only if $ u  - \epsilon( e \tens \dot{e})$
has a positive representative in $\cl S  \otimes_{max} \bb C^{k+m}$, say $x_1\tens e_1 + \cdots + x_{k+m}\tens e_{k+m}$,
for some small $\epsilon>0$. Clearly each of $x_i$ belongs to $\cl S^+$.
This means that
$$
s_1 \tens \dot{e}_1 + \cdots + s_{k+m-1} \tens \dot{e}_{k+m-1} - \epsilon( e \tens \dot{e}) =
x_1\tens  \dot{e}_1 + \cdots + x_{k+m}\tens  \dot{e}_{k+m}.
$$
Now using the fact that $\dot{e}_{k+m} = \dot{e}_1+\cdots + \dot{e}_k - \dot{e}_{k+1} -\cdots - \dot{e}_{k+m-1}$ and 
$ \dot{e} = \dot{e}_1+\cdots +\dot{e}_{k+m} = 2 \dot{e}_1+\cdots+ 2\dot{e}_k $  we obtain following
equalities:
$$
\begin{array}{ccc}
s_1 & = & x_1 + x_{k+m} + 2\epsilon e \\ 
\vdots &\vdots & \vdots \\
s_k & = & x_k + x_{k+m} + 2\epsilon e \\
\end{array}
\mbox{ and }
\begin{array}{ccc}
s_{k+1} & = & x_{k+1} - x_{k+m}  \\
\vdots &\vdots & \vdots \\
s_{k+m-1} & = & x_{k+m-1} - x_{k+m}  \\
\end{array}
$$
Finally putting $s = x_{k+m}$ we clearly get
$$
s < s_1,...,s_k \mbox{ and } -s \leq s_{k+1},..., s_{k+m-1}.
$$
Note that given such elements
we can reconstruct positive elements $x_1,...,x_{k+m}$ such that the reverse direction follows. 
\end{proof}

\begin{proposition}\label{pro +-poscrit.}
Let $\cl S$ be an operator system and $s_1,...,s_k$, $t_1,...,t_m$ be self-adjoint elements of $\cl S$. Then
there is a self-adjoint element $s$ in $\cl S$ such that
$$
t_1,...,t_m < s < s_1,...,s_k
$$
if and only if the following element
$$
u = s_1 \tens \dot{e}_1 + \cdots + s_k \tens \dot{e}_k - t_1 \tens \dot{e}_{k+1} - \cdots - t_m \tens \dot{e}_{k+m}
$$
is strictly positive in $\cl S \otimes_{max} \bb C^{k+m}/J_{k,m}$.
\end{proposition}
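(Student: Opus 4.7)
The plan is to reduce the statement to the immediately preceding positivity criterion for $\cl S\otimes_{max}\bb C^{k+m}/J_{k,m}$. The key observation is that in the quotient operator system $\bb C^{k+m}/J_{k,m}$ we have the relation
$$
\dot{e}_{k+m}=\dot{e}_1+\cdots+\dot{e}_k-\dot{e}_{k+1}-\cdots-\dot{e}_{k+m-1},
$$
so substituting this into $u$ and collecting coefficients with respect to the basis $\{\dot{e}_1,\dots,\dot{e}_{k+m-1}\}$ gives
$$
u=\sum_{i=1}^{k}(s_i-t_m)\tens\dot{e}_i+\sum_{j=1}^{m-1}(t_m-t_j)\tens\dot{e}_{k+j}.
$$
By the previous proposition, $u>0$ in $\cl S\otimes_{max}\bb C^{k+m}/J_{k,m}$ if and only if there exists $s'\in\cl S^+$ with
$$
s'<s_i-t_m\;\;(i=1,\dots,k)\quad\text{and}\quad -s'\leq t_m-t_j\;\;(j=1,\dots,m-1).
$$

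For the implication starting from positivity I set $s:=t_m+s'$. Then the strict inequalities $s'<s_i-t_m$ translate directly to $s<s_i$, while $-s'\leq t_m-t_j$ gives $t_j\leq s$ for $j<m$, and $s'\geq 0$ gives $t_m\leq s$. Thus $t_1,\dots,t_m\leq s<s_1,\dots,s_k$. Replacing $s$ by $s+\varepsilon e$ for sufficiently small $\varepsilon>0$ (smaller than each defect $\delta_i$ arising from $s<s_i$) upgrades all the lower inequalities to strict while preserving the upper strict inequalities, producing the required self-adjoint separator.

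For the converse, given a self-adjoint $s$ with $t_1,\dots,t_m<s<s_1,\dots,s_k$, I set $s':=s-t_m$. Strict positivity $t_m<s$ gives $s'\in\cl S^+$ (indeed $s'\geq\delta e$ for some $\delta>0$). Each $s_i-t_m-s'=s_i-s$ is strictly positive, yielding $s'<s_i-t_m$. For $j<m$, one has $t_m-t_j-(-s')=s-t_j>0$, so $-s'\leq t_m-t_j$ (with strict inequality, more than needed). Invoking the preceding proposition, $u>0$ in $\cl S\otimes_{max}\bb C^{k+m}/J_{k,m}$.

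There is no real obstacle here: the content of the result is already packaged in the previous proposition, and all that is needed is the affine change of variable $s\leftrightarrow s'=s-t_m$ combined with the $\varepsilon$-perturbation trick to synchronize strict and weak inequalities. The only point that deserves a line of care is the asymmetric role played by $t_m$ (singled out by the basis choice), but since the hypotheses and conclusion are symmetric in $t_1,\dots,t_m$ this labeling is harmless.
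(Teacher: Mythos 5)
Your proof is correct and follows essentially the same route as the paper: rewrite $u$ in the basis $\{\dot{e}_1,\dots,\dot{e}_{k+m-1}\}$ using $\dot{e}_{k+m}=\dot{e}_1+\cdots+\dot{e}_k-\dot{e}_{k+1}-\cdots-\dot{e}_{k+m-1}$, apply the preceding positivity criterion, and pass between $s$ and $s'=s-t_m$ with an $\varepsilon$-perturbation to synchronize strict and weak inequalities. Your write-up is in fact slightly more explicit than the paper's about the perturbation step and the converse direction, but the argument is the same.
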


\begin{proof}
By using the fact that $\dot{e}_{k+m} = \dot{e}_1+\cdots + \dot{e}_k - \dot{e}_{k+1} -\cdots - \dot{e}_{k+m-1}$ we can re-write $u$ as
\begin{eqnarray*}
u &= &(s_1 - t_{m}) \tens \dot{e}_1 + \cdots +  (s_k - t_{m}) \tens \dot{e}_k \\
 &  &\;\;\; +  (- t_1 + t_{m}) \tens \dot{e}_{k+1} + \cdots + ( - t_{m-1} + t_{m}) \tens \dot{e}_{k+m-1}.
\end{eqnarray*}
By using the above proposition we have that $u > 0$ if and only if there is an element $t \in \cl S^+$ such that
\begin{eqnarray}\label{sss}
 -t \leq - t_1 + t_{m}, ...,  - t_{m-1} + t_{m} \mbox{ and } t < s_1 - t_{m},...,s_k - t_{m}.
\end{eqnarray}
By setting $s = t + t_{m}$ we obtain a self-adjoint element and last condition becomes
\begin{eqnarray}\label{ttt}
 t_1,...,t_{m} \leq s < s_1,...,s_k.
\end{eqnarray}
Note that the conditions in Equation \ref{sss} and \ref{ttt} are equivalent (one can simply reconstruct $t$ by setting $t = s - t_m$). Now
a simple perturbation argument shows that we can take the inequality in  Equation \ref{ttt} to be strict. This finishes the proof.
\end{proof}

We close this section by showing that $\bb C^4/J_{2,2}$ is C*-nuclear. By identifying $\bb Z_2 * \bb Z_2 = \langle a,b: a^2 = b^2 = e \rangle$
by its canonical image in $C^*(\bb Z_2 * \bb Z_2)$ we have that both $a$ and $b$ are self-adjoint
unitaries. We set 
$$
\cl S = span\{e,a,b\} \subset C^*(\bb Z_2 * \bb Z_2).
$$
The following lemma can easily be verified by the reader.
\begin{lemma}
Let $a$ be a self-adjoint element of a C*-algebra $\cl A$ such that $-e \leq a \leq e$. Then
$$
U_a = \left(
\begin{array}{cc}
a& \sqrt{e+a} \sqrt{e-a}\\
\sqrt{e+a} \sqrt{e-a} &  -a
\end{array}
\right)
$$
is a self-adjoint unitary in $M_2(\cl A)$.
\end{lemma}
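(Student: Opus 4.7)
The plan is to proceed by direct verification using the continuous functional calculus. First I would set $b = \sqrt{e+a}\,\sqrt{e-a}$ and observe that the two factors are well-defined positive elements because the hypothesis $-e \leq a \leq e$ gives $e+a \geq 0$ and $e-a \geq 0$. Since both $\sqrt{e+a}$ and $\sqrt{e-a}$ belong to the abelian C*-subalgebra generated by $a$ and the unit, they commute; consequently $b$ is self-adjoint and, by functional calculus, coincides with $\sqrt{e-a^2}$. This immediately gives $U_a^* = U_a$, since the diagonal entries $a,-a$ are self-adjoint and the off-diagonal entry $b$ matches its own transpose.

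Next I would compute $U_a^2$ by multiplying the $2\times 2$ block matrix with itself, using heavily that $a$ and $b$ commute (both being continuous functions of $a$). The $(1,1)$ entry becomes $a^2 + b^2 = a^2 + (e-a^2) = e$; the $(2,2)$ entry becomes $b^2 + a^2 = e$ by the same reasoning; the $(1,2)$ entry becomes $a b + b(-a) = ab - ba = 0$; and symmetrically for the $(2,1)$ entry. Hence $U_a^2 = I_2$, which combined with self-adjointness yields $U_a^* U_a = U_a U_a^* = I_2$, proving that $U_a$ is a self-adjoint unitary in $M_2(\cl A)$.

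No step here is genuinely difficult; the only point that requires a moment of care is justifying the identity $\sqrt{e+a}\,\sqrt{e-a} = \sqrt{e-a^2}$ and the commutation $ab = ba$, both of which rest on the fact that all the elements involved lie in the commutative C*-algebra $C^*(a,e)$ and can therefore be manipulated via the continuous functional calculus exactly as scalar functions of a real variable on $[-1,1]$.
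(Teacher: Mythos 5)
Your proof is correct and is exactly the direct verification the paper has in mind (the paper simply states that the lemma ``can easily be verified by the reader'' and gives no argument). The two key observations --- that $\sqrt{e+a}\,\sqrt{e-a}=\sqrt{e-a^2}$ commutes with $a$ because everything lives in the commutative C*-algebra $C^*(a,e)$, and that $U_a^2=I_2$ then follows by block multiplication --- are precisely what is needed, so there is nothing to add.
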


\begin{lemma}
Let $\cl S \subset  C^*(\bb Z_2 * \bb Z_2)$ be the above operator subsystem and let $\cl A$ be a unital C*-algebra. 
Then every ucp map $\varphi: \cl S \rightarrow \cl A$ extends to a ucp map on $C^*(\bb Z_2 * \bb Z_2)$.
\end{lemma}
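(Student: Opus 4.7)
The plan is to build the extension via a standard dilation trick, using the preceding lemma to convert the ucp values $\varphi(a), \varphi(b)$ into genuine self-adjoint unitaries in $M_2(\cl A)$, and then invoking the universal property of $C^*(\bb Z_2 * \bb Z_2)$ as the free product of two copies of $C^*(\bb Z_2) \cong \bb C^2$.

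First I would observe that since $a$ and $b$ are self-adjoint unitaries in $C^*(\bb Z_2 * \bb Z_2)$, we have $a = a^*$ and $-e \leq a \leq e$ (and likewise for $b$). Any ucp map preserves involution and order, so $A := \varphi(a)$ and $B := \varphi(b)$ are self-adjoint contractions in $\cl A$. Applying the preceding lemma to $A$ and to $B$ produces self-adjoint unitaries $U_A, U_B \in M_2(\cl A)$ whose $(1,1)$-entries are $A$ and $B$ respectively.

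Next, since $\bb Z_2 * \bb Z_2 = \langle a,b : a^2 = b^2 = e\rangle$, the pair $(U_A, U_B)$ defines a unitary representation of $\bb Z_2 * \bb Z_2$ into $M_2(\cl A)$, which extends by the universal property of the full group C*-algebra to a unital $*$-homomorphism $\pi : C^*(\bb Z_2 * \bb Z_2) \rightarrow M_2(\cl A)$ sending $a \mapsto U_A$ and $b \mapsto U_B$. Now I would compose $\pi$ with the compression to the $(1,1)$-corner, i.e.\ with the ucp map $M_2(\cl A) \rightarrow \cl A$, $(x_{ij}) \mapsto x_{11}$, to obtain a ucp map $\tilde\varphi : C^*(\bb Z_2 * \bb Z_2) \rightarrow \cl A$.

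Finally I would verify that $\tilde\varphi$ restricts to $\varphi$ on $\cl S$: it is unital, and on the generators $\tilde\varphi(a)$ and $\tilde\varphi(b)$ are precisely the $(1,1)$-entries of $U_A$ and $U_B$, which are $A = \varphi(a)$ and $B = \varphi(b)$. Since $\cl S = \mathrm{span}\{e,a,b\}$, this gives $\tilde\varphi|_{\cl S} = \varphi$. There is essentially no obstacle here — the content is entirely in the preceding lemma, which makes the dilation of a self-adjoint contraction to a self-adjoint unitary explicit; the only mild point is keeping track of the fact that the compression is ucp and that the $(1,1)$-entries are chosen exactly so that the extension agrees with $\varphi$ on the generators.
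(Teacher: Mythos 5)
Your proof is correct and follows exactly the same route as the paper: dilate $A=\varphi(a)$ and $B=\varphi(b)$ to self-adjoint unitaries $U_A,U_B\in M_2(\cl A)$ via the preceding lemma, extend the resulting unitary representation of $\bb Z_2 * \bb Z_2$ to a unital $*$-homomorphism on $C^*(\bb Z_2 * \bb Z_2)$, and compress to the $(1,1)$-corner. Nothing to add.
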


\begin{proof}
First notice that $a$ and $b$ are self-adjoint unitaries in  $C^*(\bb Z_2 * \bb Z_2)$. It is not hard to
see that $-e \leq a,b \leq e$. Let $\varphi(a) = A$ and $\varphi(b) = B$. Clearly $-e \leq A,B \leq e$ in $\cl A$.
Let $U_A$ and $U_B$ be the self-adjoint unitaries in $M_2(\cl A)$ as above. Consider the unitary representation
$$
\rho : \bb Z_2 * \bb Z_2 \rightarrow M_2(\cl A) \mbox{ given by } a \mapsto U_A \mbox{ and } b \mapsto U_B.
$$
Let $\pi:C^*(\bb Z_2 * \bb Z_2) \rightarrow M_2(\cl A)$ be the unital $*$-homomorphism extending $\rho$. It is easy to see
that $\psi: M_2(\cl A) \rightarrow \cl A$, $(A_{ij}) \mapsto A_{11}$ is a ucp map. Thus $\psi \circ \pi$ is a ucp map
from $C^*(\bb Z_2 * \bb Z_2)$ into $\cl A$. Note that $(\psi \circ \pi)(a) = A$ and $(\psi \circ \pi)(b) = B$. So it extends
$\varphi$. 
\end{proof}

By using the fact that $ C^*(\bb Z_2 * \bb Z_2)$ is nuclear we can deduce the following:
\begin{corollary}
The above operator subsystem $\cl S \subset  C^*(\bb Z_2 * \bb Z_2)$ is C*-nuclear.
\end{corollary}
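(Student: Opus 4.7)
The plan is to show the equivalent formulation of C*-nuclearity: $\cl S \otimes_{\min} \cl B = \cl S \otimes_{\max} \cl B$ for every unital C*-algebra $\cl B$. Since the right tensorant is a C*-algebra, property (3) in Subsection \ref{subsec c} identifies $\cl S \otimes_{\max} \cl B$ with $\cl S \otimes_{c} \cl B$, so it suffices to prove equality of the $\min$ and $c$ cones at every matricial level. The strategy is to transport positivity between $\cl S$ and the nuclear ambient C*-algebra $C^*(\bb Z_2 * \bb Z_2)$ using injectivity of $\min$ in one direction and the extension lemma above in the other.

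Concretely, given $u \in M_n(\cl S \otimes \cl B)$ positive in $\cl S \otimes_{\min} \cl B$, the injectivity of $\min$ realizes $u$ as a positive element of $C^*(\bb Z_2 * \bb Z_2) \otimes_{\min} \cl B$. By the assumed nuclearity of $C^*(\bb Z_2 * \bb Z_2)$ this upgrades to positivity in $C^*(\bb Z_2 * \bb Z_2) \otimes_{\max} \cl B = C^*(\bb Z_2 * \bb Z_2) \otimes_{c} \cl B$. To check $u \in C_n^{com}(\cl S, \cl B)$, I would fix a Hilbert space $H$ and any ucp maps $\phi \colon \cl S \to B(H)$, $\psi \colon \cl B \to B(H)$ with commuting ranges, extend $\phi$ to a ucp map $\tilde{\phi} \colon C^*(\bb Z_2 * \bb Z_2) \to B(H)$ via the preceding lemma, and use $c$-positivity over $C^*(\bb Z_2 * \bb Z_2)$ to conclude $(\tilde{\phi} \cdot \psi)^{(n)}(u) \geq 0$. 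Because $u$ lives in $\cl S \otimes \cl B$ and $\tilde{\phi}|_{\cl S} = \phi$, the left side equals $(\phi \cdot \psi)^{(n)}(u)$, giving the desired positivity.

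The main obstacle, and the only step that is not automatic, is verifying that the extension $\tilde{\phi}$ inherits the commutativity with $\psi(\cl B)$. This is where the explicit form of the extension from the preceding lemma becomes indispensable: $\tilde{\phi}$ is obtained by forming the self-adjoint unitary dilations $U_{\phi(a)}, U_{\phi(b)} \in M_2(B(H))$, whose matrix entries are built out of $\phi(a), \phi(b)$ and $\sqrt{e \pm \phi(a)}, \sqrt{e \pm \phi(b)}$, extending to a $*$-homomorphism $\pi \colon C^*(\bb Z_2 * \bb Z_2) \to M_2(B(H))$, and compressing to the $(1,1)$-corner. Since $\phi(a), \phi(b) \in \psi(\cl B)'$ by hypothesis and $\psi(\cl B)'$ is a von Neumann algebra closed under continuous functional calculus, every entry of $U_{\phi(a)}$ and $U_{\phi(b)}$ lies in $\psi(\cl B)'$. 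In $M_2(B(H))$ the copy of $\psi(\cl B)$ sits diagonally as $\psi(\cl B) \otimes I_2$, and an element of $M_2(B(H))$ commutes with $\psi(\cl B) \otimes I_2$ precisely when each of its matrix entries commutes with $\psi(\cl B)$; this condition is preserved by products, sums, adjoints and norm limits, hence by all of $\pi$, and then by the $(1,1)$-compression. Therefore $\tilde{\phi}(C^*(\bb Z_2 * \bb Z_2)) \subseteq \psi(\cl B)'$, which is exactly what is needed.

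Since the argument above already treats arbitrary matricial levels $n$, it gives the complete order identification $\cl S \otimes_{\min} \cl B = \cl S \otimes_{c} \cl B$ for every unital C*-algebra $\cl B$, and hence the C*-nuclearity of $\cl S$.
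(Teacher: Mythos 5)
Your proof is correct and takes essentially the same route as the paper: both rest on the dilation lemma to extend a ucp map $\phi$ on $\cl S$ to all of $C^*(\bb Z_2 * \bb Z_2)$ while staying inside the commutant of the other leg $\psi(\cl B)$, combined with the nuclearity of $C^*(\bb Z_2 * \bb Z_2)$ and the injectivity of $\otimes_{\min}$. The only difference is presentational --- the paper packages the argument as a complete order embedding $\cl S \otimes_{\max} \cl B \subset C^*(\bb Z_2 * \bb Z_2) \otimes_{\max} \cl B$ obtained by extending ucp maps defined on the tensor product, whereas you verify membership in the cones $C_n^{com}$ directly; your explicit check that every matrix entry of $U_{\phi(a)}$ and $U_{\phi(b)}$ lies in the von Neumann algebra $\psi(\cl B)'$ usefully fills in what the paper dispatches with ``clearly the ranges still commute.''
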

\begin{proof}
Let $\cl A$ be a C*-algebra. We first claim that every ucp map $\varphi : \cl S \otimes_{max} \cl A \rightarrow B(H)$
extends to a ucp map on $C^*(\bb Z_2 * \bb Z_2) \otimes_{max} \cl A$. This is enough to conclude that we have the
complete order embedding $\cl S \otimes_{max} \cl A \subset C^*(\bb Z_2 * \bb Z_2) \otimes_{max} \cl A$.
Since $c$ and $max$ coincides when one of the
tensorant is a C*-algebra we can think of $max$ as $c$. This means that there is a Hilbert space $K \supset H$,
ucp maps $\phi : \cl S \rightarrow B(K)$ and $\psi : \cl A \rightarrow B(K)$ with commuting ranges such that $\varphi = V^*( \phi \cdot \psi )V$,
where $V$ is the inclusion of $H$ in $K$ and so $V^*$ is the projection onto $H$. By using the above lemma, let $\tilde{\phi}: C^*(\bb Z_2 * \bb Z_2) \rightarrow B(K)$
be the ucp extension of $\phi$ such that the image of $\tilde{\phi}$ stays in the C*-algebra generated by $\phi(\cl S)$. 
Clearly $\tilde{\phi}$ and $\psi$ still have the commuting ranges.
This means that $\tilde{\phi} \cdot \psi$ is a ucp map from $C^*(\bb Z_2 * \bb Z_2) \otimes_{max} \cl A$ into $B(K)$. The compression
of this map, i.e., $V^* (\tilde{\phi} \cdot \psi)V$ is  ucp and it extends $\varphi$. This proves  our claim. Finally by the injectivity
of the minimal tensor product we have that  $\cl S \otimes_{min} \cl A \subset C^*(\bb Z_2 * \bb Z_2) \otimes_{min} \cl A$.
Now since $C^*(\bb Z_2 * \bb Z_2) $ is nuclear we obtain $\cl S \otimes_{min} \cl A = \cl S \otimes_{max} \cl A$.
Since $\cl A$ was arbitrary,  $\cl S$ is C*-nuclear.
\end{proof}

\begin{corollary}
$\bb C^4 / J_{2,2}$, i.e., $\bb C^4/ span\{(1,1,-1,-1)\}$, is C*-nuclear.
\end{corollary}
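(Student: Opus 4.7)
The plan is to deduce this corollary immediately from the previous one by invoking the identification of operator systems established in Theorem~\ref{thm equiv}. The previous corollary shows that the three-dimensional operator subsystem $\cl S = \text{span}\{e, a, b\} \subset C^*(\bb Z_2 * \bb Z_2)$ (where $\bb Z_2 * \bb Z_2 = \langle a, b : a^2 = b^2 = e\rangle$) is C*-nuclear. I would simply observe that this is precisely the operator system appearing in item (4) of Theorem~\ref{thm equiv} specialized to $k = m = 2$: the spanning set $\{e, a, a^2, \ldots, a^{k-1}, b, b^2, \ldots, b^{m-1}\}$ collapses to $\{e, a, b\}$ because $a^2 = b^2 = e$.

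Hence Theorem~\ref{thm equiv} (with $k = m = 2$) yields the unital complete order isomorphism
$$
\bb C^4 / J_{2,2} = \bb C^4 / \text{span}\{(1,1,-1,-1)\} \;\cong\; \cl S \subset C^*(\bb Z_2 * \bb Z_2).
$$
Since C*-nuclearity is a property of the operator system up to unital complete order isomorphism (it is defined entirely in terms of matricial cones and the minimal/maximal tensor products), the C*-nuclearity of $\cl S$ established in the previous corollary transfers verbatim to $\bb C^4/J_{2,2}$.

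There is really no obstacle here --- the entire content of the proof was already packaged into the previous corollary, and the present statement is just its reformulation in the $\bb C^{k+m}/J_{k,m}$ language which the section has been developing. The only thing to double-check is that the identification in Theorem~\ref{thm equiv}~(4) genuinely reduces to the three-dimensional span $\{e, a, b\}$ when $k = m = 2$, which it does since the relations $a^2 = e$ and $b^2 = e$ force the higher powers to be absorbed. Thus the proof will be a one-line invocation of Theorem~\ref{thm equiv} and the preceding corollary.
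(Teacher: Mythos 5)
Your proof is correct and is exactly the paper's argument: the paper likewise deduces the corollary by citing the identification of Theorem~\ref{thm equiv} (with $k=m=2$) between $\bb C^4/J_{2,2}$ and the operator system $\cl S=\mathrm{span}\{e,a,b\}\subset C^*(\bb Z_2 * \bb Z_2)$ of the preceding corollary, and then transferring C*-nuclearity across the unital complete order isomorphism.
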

\begin{proof}
This is a consequence of the identification in Theorem \ref{thm equiv}. $\bb C^4 / J_{2,2}$ and the operator system $\cl S$ in the
above corollary are unitally completely order isomorphic.
\end{proof}

\begin{example}
Let $\cl A \subset B(H)$ be a unital C*-subalgebra. Let $a,b,c \in \cl A$ such that
there is an element $x \in B(H)^+$ with $a,b > x$ and $c \geq -x$. Then there
is an element in $\cl A^+$ with these properties, that is, there is $y \in \cl A^+$ with
$a,b>y$ and $c \geq -y$. To see this first note that
$$
\cl A \otimes_{max} (\bb C^4 / J_{2,2}) \subset B(H) \otimes_{max} (\bb C^4 / J_{2,2}) 
$$
which follows from the injectivity of the minimal tensor product and C*-nuclearity of  $ (\bb C^4 / J_{2,2}) $. Now if take the
basis $\{\dot{e}_1,\dot{e}_2,\dot{e}_3\}$ for $ (\bb C^4 / J_{2,2}) $ then the element $ u = a \tens \dot{e}_1 + b \tens \dot{e}_2 +c \tens \dot{e}_3$
is strictly positive in $B(H) \otimes_{max} (\bb C^4 / J_{2,2}) $ if and only if there is an element $x \in B(H)^+$ with $a,b > x$ and $c \geq -x$ (Proposition \ref{prop pos-criteria}).
So the condition we assumed is equivalent to $u > 0$. The above unital complete order embedding ensures that $u > 0$ in  $\cl A \otimes_{max} (\bb C^4 / J_{2,2})$ too.
So such an element exists in $\cl A$.
\end{example}

%%%%%%%%%%%%%%%%%%%%%%%%%%%%%%%%%%%%%%%%%%%%%%%%%%%%%%%%%%%%%%

\section{``Relative'' Tight Riesz Interpolation Property}

In this section we present an equivalent formulation of Lance's weak expectation property in terms
of a separation property in matricial order structure of a C*-algebra $\cl A$  which corresponds
to a non-commutative Riesz interpolation property.  Though our construction makes use of quotient and tensor
theory of operator systems we restrict our application
on C*-algebras as the weak expectation property is well known characteristic in this context.

\begin{definition}
Let $\cl A $ be a unital C*-subalgebra of a C*-algebra $\cl B$. 
We say that $\cl A$ has the \textit{$(k,m)$ tight Riesz interpolation property} 
in $\cl B$, TR$(k,m)$-property in short,
if for any $x_1,...,x_k$ and $y_1,...,y_m$ in $\cl A_{sa}$ if there exists 
an element  $b \in B_{sa}$ with
$$
x_1,...,x_k < b < y_1,...,y_m
$$
 then there is an element $a \in A_{sa}$
such that 
$$
x_1,...,x_k < a < y_1,...,y_m.
$$
We say that $\cl A$ has 
the \textit{complete $(k,m)$ tight Riesz interpolation property} in $\cl B$
if $M_n(\cl A) $ has  TR$(k,m)$-property in $M_n(\cl B)$ for every $n$ and we 
abbreviate the latter condition as complete TR$(k,m)$-property.
\end{definition}

\begin{rem}
In lattice group theory a group $G$ is said to have TR$(k,m)$-property 
if for any  $x_1,...,x_k$ and $y_1,...,y_m$ in $G$ with $x_i< y_j$ for all
$i$ and $j$ there is an element $g\in G$ such that $x_i<g< y_j$ for all
$i$ and $j$. We remark that in this sense even the additive abelian group of selfadjoints
in $M_2$ fails to have TR$(2,2)$-property. The following example is pointed out by Vern Paulsen. Consider 
$$
a = \left(
\begin{array}{cc}
 1 & 0 \\
0 & 0
\end{array}
\right),
b = \left(
\begin{array}{cc}
 1 & 1 \\
1 & 1
\end{array}
\right),
c = \left(
\begin{array}{cc}
 1 .1 & 0.5 \\
0.5 & 3.6 
\end{array}
\right),
\mbox{ and }
d = \left(
\begin{array}{cc}
 3.6 & 0.5 \\
0.5 & 1.1
\end{array}
\right).
$$
It follows that $a,b < c,d$ but there is no element $x$ in $M_2$ with $a,b < x<c,d$.
We leave the verification of this to the reader.
Consequently, our understanding
in the present paper is \textit{relative} TR$(k,m)$-property, that is, if  such
pairs have a separation in a larger object then whether it exists in the smaller object.
\end{rem}

We mostly interested in this phenomena when $\cl B = B(H)$ for the following reason:
Let $\cl A$ be a C*-subalgebra of $B(H)$ and  let $x_1,...,x_k$ and $y_1,...,y_m$ be
self-adjoint elements of $\cl A$. Suppose that $\cl A$ embeds into an operator
system $\cl S$ unitally and completely order isomorphically (via $i$) such that
 there exists 
an element  $s \in \cl S_{sa}$ with
$$
i(x_1),...,i(x_k) < s <i(y_1),...,i(y_m).
$$
 Then there is an element $b \in B(H)$
such that 
$$
x_1,...,x_k < b < y_1,...,y_m.
$$
In other words, whenever interpolation exists in a representation of $\cl A$ then it exists in $B(H)$. In fact,
by Arveson's extension theorem  \cite{Ar1}, the inclusion $\cl A \subset B(H)$ extends to a ucp map from $\cl S$ into $B(H)$, say $\varphi$.
It is elementary to see that $b = \varphi(s)$ has the desired property.

\begin{theorem}\label{thm TR-tens}
Let $\cl A$ be a unital C*-subalgebra of $\cl B$. Then the following are equivalent:
\begin{enumerate}
 \item $\cl A$ has the complete TR$(k,m)$-property in $\cl B$.
 \item We have a unital and complete order embedding
$$
\cl A \otimes_{max} (\bb C^{k+m}/J_{k,m}) \subset \cl B \otimes_{max} (\bb C^{k+m}/J_{k,m}).
$$
\end{enumerate} 
\end{theorem}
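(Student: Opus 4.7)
The plan is to translate the (complete) TR$(k,m)$-interpolation condition into a strict positivity statement in the maximal tensor product with $\bb C^{k+m}/J_{k,m}$ via Proposition \ref{pro +-poscrit.}, and then run the translation at every matricial level using the matrix-algebra associativity $M_n(\cl A \otimes_{max} \cl T) = M_n(\cl A) \otimes_{max} \cl T$ valid for max. Throughout I shall tacitly identify $\bb C^{k+m}/J_{k,m}$ with $\bb C^{m+k}/J_{m,k}$ via the coordinate swap isomorphism, so that the template of Proposition \ref{pro +-poscrit.} (with $k$ upper bounds and $m$ lower bounds) matches the TR$(k,m)$-convention (with $k$ lower bounds and $m$ upper bounds).

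For (2) $\Rightarrow$ (1): Fix $n$ and suppose $x_1,\ldots,x_k, y_1,\ldots,y_m \in M_n(\cl A)_{sa}$ admit an interpolant $b \in M_n(\cl B)_{sa}$ with $x_i < b < y_j$. Applied to the operator system $M_n(\cl B)$, Proposition \ref{pro +-poscrit.} converts this into strict positivity of
\begin{equation*}
u = y_1 \otimes \dot{e}_1 + \cdots + y_m \otimes \dot{e}_m - x_1 \otimes \dot{e}_{m+1} - \cdots - x_k \otimes \dot{e}_{m+k}
\end{equation*}
in $M_n(\cl B) \otimes_{max} (\bb C^{m+k}/J_{m,k})$. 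Now $u$ already lies in $M_n(\cl A) \otimes (\bb C^{m+k}/J_{m,k})$, so by the complete order embedding in (2) (combined with matrix associativity of max), $u$ is strictly positive already in $M_n(\cl A) \otimes_{max} (\bb C^{m+k}/J_{m,k})$. A second application of Proposition \ref{pro +-poscrit.}, now to $M_n(\cl A)$, produces the desired interpolant $a \in M_n(\cl A)_{sa}$ with $x_i < a < y_j$.

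For (1) $\Rightarrow$ (2): The inclusion is ucp at every matricial level by functoriality of max. To see it is also an order embedding at level $n$, take a self-adjoint $U \in M_n(\cl A) \otimes (\bb C^{k+m}/J_{k,m})$ that is positive in $M_n(\cl B) \otimes_{max} (\bb C^{k+m}/J_{k,m})$. Using the single defining relation $\dot{e}_1+\cdots+\dot{e}_k - \dot{e}_{k+1}-\cdots-\dot{e}_{k+m}=0$, write $U$ in the template of Proposition \ref{pro +-poscrit.} with data $S_1,\ldots,S_k, T_1,\ldots,T_m \in M_n(\cl A)_{sa}$. For each $\epsilon > 0$ the shifted element $U + \epsilon(I \otimes \dot{e})$ is bounded below by $\epsilon(I \otimes \dot{e})$ and hence strictly positive in $M_n(\cl B) \otimes_{max}$; expanding $I \otimes \dot{e}$ in the template shows this corresponds to the shifted data $S_i + \epsilon I$ and $T_j - \epsilon I$, so Proposition \ref{pro +-poscrit.} yields $b_\epsilon \in M_n(\cl B)_{sa}$ with $T_j - \epsilon I < b_\epsilon < S_i + \epsilon I$. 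The complete TR$(k,m)$-property of $\cl A$ in $\cl B$ at level $n$ then supplies $a_\epsilon \in M_n(\cl A)_{sa}$ satisfying the same strict inequalities, and Proposition \ref{pro +-poscrit.} in reverse makes $U + \epsilon(I \otimes \dot{e})$ strictly positive in $M_n(\cl A) \otimes_{max}(\bb C^{k+m}/J_{k,m})$. Letting $\epsilon \downarrow 0$ the Archimedean property delivers $U \ge 0$ there.

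The main obstacles are organizational: expressing a general self-adjoint element of the algebraic tensor product in the template form of Proposition \ref{pro +-poscrit.} (which always works, with non-unique parameters, thanks to the single defining relation of the quotient), correctly aligning the upper/lower bound conventions between the proposition and the TR-definition, and handling the $\epsilon$-shift in (1) $\Rightarrow$ (2) carefully enough that the Archimedean step closes. No new idea is required beyond the positivity criterion of Proposition \ref{pro +-poscrit.} and the matrix-algebra associativity of max.
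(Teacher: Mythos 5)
Your proposal is correct and follows essentially the same route as the paper: Proposition \ref{pro +-poscrit.} as the dictionary between strict positivity in $-\otimes_{max}(\bb C^{k+m}/J_{k,m})$ and tight Riesz interpolation, matrix associativity of $\otimes_{max}$ to reduce to the ground level, and a reduction from $\geq$ to $>$ in the direction (1) $\Rightarrow$ (2). The only difference is cosmetic: where the paper completes the template by appending a term $0 \otimes \dot{e}_{k+m}$ and dismisses the passage from $\geq$ to $>$ with ``a moment of thought,'' you spell out the $\epsilon(I \otimes \dot{e})$-shift and the closing Archimedean limit explicitly, which is a welcome clarification but not a different argument.
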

\begin{proof}
Before starting the proof note that if we remove the word ``complete'' in (1) and ``completely'' in (2)
and prove the result this way then the original statement automatically satisfied. This simply follows
from the associativity of the maximal tensor product which yields: $M_n(\cl S \otimes \cl T) = M_n( \cl S) \otimes_{max} \cl T$
for every operator systems $\cl S$, $\cl T$ and $n$. Secondly, for the compatibility with our previous results,
we remark that $\cl A$ has TR$(k,m)$-property in $\cl B$ if and only if it has TR$(m,k)$-property in $\cl B$.

Turning back to proof first suppose (2). Let $x_1,...,x_m$ and $y_1,...,y_k$ be elements in $\cl A_{sa}$
such that there is an element in $b$ in $\cl B_{sa}$ with $x_i<b<y_j$ for all $i=1,...,m$ and
$j=1,...,k$. Py Proposition \ref{pro +-poscrit.} we get
$$
u = y_1\tens  \dot{e}_1 +  \cdots + y_k \tens \dot{e}_k - x_1 \tens \dot{e}_{k+1} - \cdots -x_m \tens \dot{e}_{k+m} > 0
$$
in $ \cl B \otimes_{max} (\bb C^{k+m}/J_{k,m})$. Since we assumed (2), $u > 0$ in $\cl A \otimes_{max} (\bb C^{k+m}/J_{k,m})$ too.
Again by using Proposition \ref{pro +-poscrit.}, there is an element $a \in \cl A_{sa}$ with $x_i<a<y_j$ for all $i$ and $j$. Conversely
suppose that $\cl A$ has TR$(m,k)$-property in $\cl B$. We take the basis $\{\dot{e}_1,...,\dot{e}_{k+m-1}\}$ for $\bb C^{k+m}/J_{k,m}$.
To establish the order embedding between the tensor products we need to show that if 
$$
v = a_1 \tens \dot{e}_1  + \cdots + a_{k+m-1} \tens \dot{e}_{k+m-1} \geq 0
$$
in $\cl B \otimes_{max} (\bb C^{k+m}/J_{k,m})$, where $a_1,...,a_{k+m-1} \in \cl A$, then 
$v \geq 0$ in $\cl A \otimes_{max} (\bb C^{k+m}/J_{k,m})$. A moment of thought shows that we can replace $\geq$
by $>$. So suppose $v > 0$. Note that $a_1,...,a_{k+m-1}$ must be self-adjoint elements of $\cl A$. By simply
adding the term $ 0  \tens \dot{e}_{k+m}$ to $v$ and using Proposition \ref{pro +-poscrit.} we see that there is an element $b$ in $\cl B_{sa}$
such that $0, -  a_{k+m-1},..., -a_{k+1} < b < a_1,...,a_k$. Since we assumed TR$(m,k)$-property there is an element $a \in \cl A_{sa}$
such that $0, -  a_{k+m-1},..., -a_{k+1} < a < a_1,...,a_k$. Again by using Proposition \ref{pro +-poscrit.} $v> 0$ in $\cl A \otimes_{max} (\bb C^{k+m}/J_{k,m})$.
\end{proof}

We are ready to state our main results. 

\begin{theorem}
Let $\cl A \subset B(H)$ be a unital C*-subalgebra. Then $\cl A$ has the complete TR(2,2)-property in $B(H)$.
\end{theorem}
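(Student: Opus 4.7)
The plan is to invoke Theorem \ref{thm TR-tens} with $k = m = 2$ and then exploit the C*-nuclearity of $\bb C^4/J_{2,2}$ established in the preceding corollary. By Theorem \ref{thm TR-tens}, the complete TR(2,2)-property of $\cl A$ in $B(H)$ is equivalent to the unital complete order embedding
$$
\cl A \otimes_{max} (\bb C^{4}/J_{2,2}) \subset B(H) \otimes_{max} (\bb C^{4}/J_{2,2}),
$$
so it suffices to verify this embedding.

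To do so, I would insert the minimal tensor product in the middle. Since $\bb C^4/J_{2,2}$ is C*-nuclear, both the max and min tensor products with any unital C*-algebra agree, giving
$$
\cl A \otimes_{max} (\bb C^{4}/J_{2,2}) = \cl A \otimes_{min} (\bb C^{4}/J_{2,2})
\quad \text{and} \quad
B(H) \otimes_{max} (\bb C^{4}/J_{2,2}) = B(H) \otimes_{min} (\bb C^{4}/J_{2,2}).
$$
The injectivity of the minimal tensor product supplies the embedding $\cl A \otimes_{min} (\bb C^{4}/J_{2,2}) \subset B(H) \otimes_{min} (\bb C^{4}/J_{2,2})$ from the inclusion $\cl A \subset B(H)$. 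Chaining the three identifications then yields the desired max-tensor embedding.

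The main (and essentially only) non-routine ingredient is the C*-nuclearity of $\bb C^4/J_{2,2}$, which has already been handled via the identification with $\mathrm{span}\{e,a,b\} \subset C^*(\bb Z_2 * \bb Z_2)$ and the self-adjoint unitary dilation trick; after that the argument is purely formal. I do not expect any genuine obstacle — once the translation to tensor products through Theorem \ref{thm TR-tens} is made, the result is immediate. The matricial upgrade to the \emph{complete} TR(2,2)-property comes for free from the built-in associativity of $\otimes_{max}$ with matrix algebras noted in the proof of Theorem \ref{thm TR-tens}, so no extra work at matrix level is needed.
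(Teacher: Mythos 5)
Your proposal is correct and is essentially identical to the paper's proof: reduce via Theorem \ref{thm TR-tens} to the embedding $\cl A \otimes_{max} (\bb C^{4}/J_{2,2}) \subset B(H) \otimes_{max} (\bb C^{4}/J_{2,2})$, then use the C*-nuclearity of $\bb C^4/J_{2,2}$ to replace $\otimes_{max}$ by $\otimes_{min}$ and invoke the injectivity of the minimal tensor product. No gaps.
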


\begin{proof}
By Theorem \ref{thm TR-tens},  $\cl A$ has the complete TR(2,2)-property in $B(H)$ if and only if we have
$$
\cl A \otimes_{max} (\bb C^{4}/J_{2,2}) \subset B(H) \otimes_{max} (\bb C^{4}/J_{2,2}).
$$
Since $\bb C^{4}/J_{2,2}$ is C*-nuclear we can replace $max$ by $min$ so the latter condition holds.
\end{proof}

Here is the extension of this result.

\begin{theorem}
Let $\cl A \subset B(H)$ be a unital C*-algebra. Then the following are equivalent:
\begin{enumerate}
 \item $\cl A$ has WEP.
 \item $\cl A$ has the complete TR$(2,3)$-property in $B(H)$.
 \item $\cl A$ has the complete TR$(k,m)$-property in $B(H)$ for some $k \geq 2$ and $m \geq 3$.
 \item $\cl A$ has the complete TR$(k,m)$-property in $B(H)$ for all integers $k,m \geq 1$.
\end{enumerate}
\end{theorem}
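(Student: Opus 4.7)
The overall plan is to recast each $\mathrm{TR}(k,m)$-condition as a tensor embedding via Theorem \ref{thm TR-tens}, which says that $\cl A$ has complete $\mathrm{TR}(k,m)$ in $B(H)$ if and only if
\begin{equation*}
\cl A \otimes_{max} (\bb C^{k+m}/J_{k,m}) \;\subset\; B(H) \otimes_{max} (\bb C^{k+m}/J_{k,m})
\end{equation*}
is a complete order embedding. Specialized to $(k,m)=(2,3)$, this is exactly the WEP criterion of Corollary \ref{cor WEPCriB(H)}, which immediately yields $(1) \Leftrightarrow (2)$. The implications $(4) \Rightarrow (3)$ and $(2) \Rightarrow (3)$ are trivial, so only $(1) \Rightarrow (4)$ and $(3) \Rightarrow (2)$ remain.

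For $(1) \Rightarrow (4)$, I would rely on WEP $=$ (el,max)-nuclearity. Given any operator system $\cl T$ one has $\cl A \otimes_{max} \cl T = \cl A \otimes_{el} \cl T$. Left injectivity of $\otimes_{el}$, combined with the fact that $B(H)$ is injective and hence satisfies $B(H) \otimes_{el} \cl T = B(H) \otimes_{max} \cl T$, produces the complete order embedding $\cl A \otimes_{max} \cl T \subset B(H) \otimes_{max} \cl T$. Matrix associativity of $\otimes_{max}$ transfers this to every amplification, so plugging $\cl T = \bb C^{k+m}/J_{k,m}$ into Theorem \ref{thm TR-tens} gives complete $\mathrm{TR}(k,m)$ for all $k,m \geq 1$.

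For $(3) \Rightarrow (2)$, I exploit the embedding lemma of Section 6: for $k \geq 2$, $m \geq 3$ there is a unital complete order embedding $i : \cq{5} \hookrightarrow \bb C^{k+m}/J_{k,m}$ admitting a ucp retraction $q$, i.e.\ $q \circ i = \id$. By functoriality of $\otimes_{max}$, the induced map $\id_{\cl S} \otimes i$ has ucp left-inverse $\id_{\cl S} \otimes q$, so $\id_{\cl S} \otimes i$ is itself a complete order embedding for every $\cl S$. Now suppose $u \in \cl A \otimes \cq{5}$ is positive in $B(H) \otimes_{max} \cq{5}$; pushing it to $(\id \otimes i)(u)$ keeps it positive in $B(H) \otimes_{max} \bb C^{k+m}/J_{k,m}$ and places it in the algebraic subspace $\cl A \otimes \bb C^{k+m}/J_{k,m}$. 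Assumption (3), rephrased through Theorem \ref{thm TR-tens}, then pulls it back positively into $\cl A \otimes_{max} \bb C^{k+m}/J_{k,m}$, and applying $\id_{\cl A} \otimes q$ recovers $u$ as a positive element of $\cl A \otimes_{max} \cq{5}$. The identical diagram chase goes through at every matricial level.

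The main obstacle, as I see it, sits in $(3) \Rightarrow (2)$: one must bootstrap from a high-dimensional quotient $\bb C^{k+m}/J_{k,m}$ back down to the four-dimensional $\cq{5}$, and the entire reduction rests on the availability of the ucp retraction $q$ supplied by the embedding lemma. The other implications are routine applications of Theorem \ref{thm TR-tens}, Corollary \ref{cor WEPCriB(H)}, and the (el,max)-nuclearity formulation of WEP.
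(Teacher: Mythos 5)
Your proposal is correct, and its skeleton matches the paper's: both reduce everything to the tensor reformulation of Theorem \ref{thm TR-tens}, get $(1)\Leftrightarrow(2)$ from Corollary \ref{cor WEPCriB(H)}, and prove $(1)\Rightarrow(4)$ by showing $\cl A \otimes_{max} (\bb C^{k+m}/J_{k,m}) \subset B(H) \otimes_{max} (\bb C^{k+m}/J_{k,m})$. Two steps are handled differently, though neither difference is a gap. For $(1)\Rightarrow(4)$ the paper replaces both max tensor products by min, using that $\bb C^{k+m}/J_{k,m}$ has the lifting property (so $B(H)\otimes_{min} = B(H)\otimes_{max}$) together with (el,max)-nuclearity of $\cl A$ and injectivity of min; you instead go through left injectivity of $\otimes_{el}$ and the identity $B(H)\otimes_{el}\cl T = B(H)\otimes_{max}\cl T$ for injective $B(H)$, which is actually slightly more general since it never invokes the lifting property of the right tensorand. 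For $(3)\Rightarrow(2)$ the paper simply observes that TR$(k,m)$ with $k\geq 2$, $m\geq 3$ implies TR$(2,3)$ directly from the definition (repeat entries in the interpolation lists), whereas you route through the Section~6 lemma embedding $\bb C^5/J_{2,3}$ into $\bb C^{k+m}/J_{k,m}$ with a ucp retraction and chase positivity through the max tensor products. Your argument is valid --- functoriality of max makes $\id\otimes q$ ucp, so the retraction pulls positivity back --- but it is considerably heavier machinery for what the paper treats as an immediate consequence of the definition; its one advantage is that it establishes the implication purely at the level of the tensor embeddings, without returning to the interpolation formulation.
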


\begin{proof}
(4)$\Rightarrow$(3) is clear. To prove (3)$\Rightarrow$(2) one can simply use the definition of the tight Riesz interpolation.
We will show (2)$\Rightarrow$(1). By Theorem \ref{thm TR-tens}, (2) is equivalent to
$$
\cl A \otimes_{max} (\bb C^{5}/J_{2,3}) \subset B(H) \otimes_{max} (\bb C^{5}/J_{2,3}).
$$
Recall from Corollary \ref{cor WEPCriB(H)} that this complete order embedding is equivalent to $\cl A$ having WEP. Finally suppose (1).
To obtain (4), by considering Theorem \ref{thm TR-tens}, we need to show that
\begin{eqnarray}\label{asdf}
\cl A \otimes_{max} (\bb C^{k+m}/J_{k,m}) \subset B(H) \otimes_{max} (\bb C^{k+m}/J_{k,m})
\end{eqnarray}
for every $k,m \geq 1$. Note that since $\bb C^{k+m}/J_{k,m}$ has the lifting property we have that
$$
 B(H) \otimes_{min} (\bb C^{k+m}/J_{k,m})  =  B(H) \otimes_{max} (\bb C^{k+m}/J_{k,m}).
$$
Also using the fact that WEP is equivalent to (el,max)-nuclearity and lifting property is equivalent to (min,er)-nuclearity
(and considering $\bb C^{k+m}/J_{k,m}$ is on the right-hand side) we have
$$
\cl A \otimes_{min} (\bb C^{k+m}/J_{k,m}) = \cl A \otimes_{el} (\bb C^{k+m}/J_{k,m}) = \cl A \otimes_{max} (\bb C^{k+m}/J_{k,m}).
$$
So both of the maximal tensor products in Equation \ref{asdf} can be replaced by $min$. Since the minimal tensor product is injective the result follows.
\end{proof}

Starting with a unital C*-algebra with WEP we can characterize its C*-subalgebras that have WEP.

\begin{corollary}
Let $\cl A$ be a unital C*-algebra of $B$. Suppose that $\cl B$ has WEP. Then $\cl A$ has WEP if and only if
$\cl A$ has the complete TR$(2,3)$-property in $\cl B$. 
\end{corollary}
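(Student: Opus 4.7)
The plan is to reduce this corollary to a tensor product formulation via Theorem \ref{thm TR-tens} and then invoke the WEP criterion of Theorem \ref{thm WEPcri} together with the ``relative'' version already noted in the remark following Corollary \ref{cor WEPCriB(H)}. Concretely, by Theorem \ref{thm TR-tens} (applied with $k=2$, $m=3$), the complete TR$(2,3)$-property of $\cl A$ in $\cl B$ is equivalent to the unital complete order embedding
$$
\cl A \otimes_{max} (\bb C^{5}/J) \subset \cl B \otimes_{max} (\bb C^{5}/J).
$$
So the whole problem becomes: assuming $\cl B$ has WEP, show that $\cl A$ has WEP if and only if the above max-tensor embedding holds.

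For the forward direction, suppose $\cl A$ has WEP. By Theorem \ref{thm WEPcri} applied to both $\cl A$ and $\cl B$, we have
$$
\cl A \otimes_{min} (\bb C^{5}/J) = \cl A \otimes_{max} (\bb C^{5}/J) \quad \text{and} \quad \cl B \otimes_{min} (\bb C^{5}/J) = \cl B \otimes_{max} (\bb C^{5}/J).
$$
Combined with the injectivity of the minimal tensor product (which gives $\cl A \otimes_{min} (\bb C^{5}/J) \subset \cl B \otimes_{min} (\bb C^{5}/J)$ completely order isomorphically), these identifications directly yield the desired embedding at the max level.

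For the reverse direction, assume the embedding $\cl A \otimes_{max} (\bb C^{5}/J) \subset \cl B \otimes_{max} (\bb C^{5}/J)$ holds. Since $\cl B$ has WEP, Theorem \ref{thm WEPcri} gives $\cl B \otimes_{min} (\bb C^{5}/J) = \cl B \otimes_{max} (\bb C^{5}/J)$. Consider the chain of completely positive maps
$$
\cl A \otimes_{max} (\bb C^{5}/J) \;\hookrightarrow\; \cl B \otimes_{max} (\bb C^{5}/J) \;=\; \cl B \otimes_{min} (\bb C^{5}/J) \;\supset\; \cl A \otimes_{min} (\bb C^{5}/J),
$$
where the last inclusion uses injectivity of $\min$. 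The canonical ucp map $\cl A \otimes_{max} (\bb C^{5}/J) \to \cl A \otimes_{min} (\bb C^{5}/J)$ factors through this picture and is visibly a complete order embedding, so it must be a complete order isomorphism; hence $\cl A \otimes_{min} (\bb C^{5}/J) = \cl A \otimes_{max} (\bb C^{5}/J)$, and Theorem \ref{thm WEPcri} yields WEP for $\cl A$.

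No new obstacle is expected here: both the TR$\leftrightarrow$tensor translation (Theorem \ref{thm TR-tens}) and the WEP$\leftrightarrow$tensor translation (Theorem \ref{thm WEPcri}) are already in hand, and the only step requiring care is verifying that the induced map from the max- to the min-tensor product of $\cl A$ with $\bb C^{5}/J$ is a complete order embedding, which is immediate once one notes it is sandwiched between two complete order embeddings into $\cl B \otimes_{\max} (\bb C^{5}/J)$.
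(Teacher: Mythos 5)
Your proposal is correct and follows essentially the same route as the paper: translate the complete TR$(2,3)$-property into the embedding $\cl A \otimes_{max} (\bb C^{5}/J) \subset \cl B \otimes_{max} (\bb C^{5}/J)$ via Theorem \ref{thm TR-tens}, then use Theorem \ref{thm WEPcri} for both $\cl A$ and $\cl B$ together with injectivity of the minimal tensor product to identify that embedding with the condition $\cl A \otimes_{min} (\bb C^{5}/J) = \cl A \otimes_{max} (\bb C^{5}/J)$. The sandwiching argument you spell out in the reverse direction is exactly the content of the paper's (more tersely stated) proof.
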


\begin{proof}
By Theorem \ref{thm WEPcri}, $\cl A$ has WEP if and only if $$\cl A \otimes_{min}(\bb C^5/J) = \cl A \otimes_{max}(\bb C^5/J).$$
We readily have that $$\cl B \otimes_{min}(\bb C^5/J) = \cl B \otimes_{max}(\bb C^5/J).$$ So $\cl A$ has WEP if and only if we have the
complete order embedding
$$
 \cl A \otimes_{max}(\bb C^5/J) \subset  \cl B \otimes_{max}(\bb C^5/J).
$$
By Theorem \ref{thm TR-tens}, this is equivalent to $\cl A$ having complete TR(2,3)-property in $\cl B$.
\end{proof}

\begin{corollary}
A unital C*-algebra has WEP if and only if it has the complete TR(2,3)-property in its injective envelope $I(\cl A)$.
\end{corollary}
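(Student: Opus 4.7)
The plan is to apply the preceding corollary with the ambient C*-algebra taken to be $\cl B = I(\cl A)$, the injective envelope of $\cl A$. Two prerequisites must be checked in order for that corollary to apply: first, that $\cl A$ sits inside $I(\cl A)$ as a unital C*-subalgebra; second, that $I(\cl A)$ itself has WEP. Once both are in hand, the desired equivalence drops out immediately.

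For the first point, I would invoke the preliminary facts about the injective envelope: $I(\cl A)$ carries a canonical C*-algebra structure (the Choi--Effros product on an injective operator system), and the canonical embedding $\cl A \hookrightarrow I(\cl A)$ is a unital complete order embedding. Because $\cl A$ is itself a C*-algebra, a standard Choi multiplicative-domain argument shows that this unital complete order embedding is automatically a unital $*$-homomorphism; consequently $\cl A$ is a unital C*-subalgebra of $I(\cl A)$, as needed.

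For the second point, I would observe that $I(\cl A)$ is injective as an operator system, so $I(I(\cl A)) = I(\cl A)$. By the definition of WEP recalled in Subsection on WEP, the canonical inclusion $I(\cl A) \hookrightarrow I(\cl A)^{dd}$ is required to extend to a ucp map defined on $I(I(\cl A))$; but this domain already equals $I(\cl A)$, so the inclusion itself serves as the extension and $I(\cl A)$ has WEP trivially. (Alternatively, one may appeal to the fact that every injective C*-algebra has WEP.)

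With these two observations established, the previous corollary applied to $\cl A \subset I(\cl A)$ yields that $\cl A$ has WEP if and only if $\cl A$ has the complete TR(2,3)-property in $I(\cl A)$. The statement is thus a direct specialization of the previous corollary, and I expect no real obstacle; the only content beyond citing that corollary is the verification that the injective envelope of a C*-algebra contains it as a C*-subalgebra and is itself a WEP C*-algebra, both of which are standard.
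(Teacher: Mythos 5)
Your proposal is correct and follows essentially the same route as the paper: the paper's one-line proof likewise reduces the statement to the preceding corollary with $\cl B = I(\cl A)$, justified by the fact that every injective operator system has WEP (immediate from the definition since $I(I(\cl A)) = I(\cl A)$). Your additional verification that $\cl A$ sits inside $I(\cl A)$ as a unital C*-subalgebra via the Choi--Effros product is a correct piece of bookkeeping that the paper leaves implicit.
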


\begin{proof}
Follows from the fact that every injective operator system has WEP. (This is elementary to see by using the definition of WEP.)
\end{proof}

\begin{corollary}
Let $k,m$ be positive integers. Then every unital C*-algebra $\cl A$ has the complete TR(k,m)-property in $\cl A^{**}$.
\end{corollary}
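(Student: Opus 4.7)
The plan is to combine Theorem~\ref{thm TR-tens} with the standard fact that every completely positive map into a finite-dimensional target extends canonically to the bidual. By Theorem~\ref{thm TR-tens}, $\cl A$ has the complete TR$(k,m)$-property in $\cl A^{**}$ if and only if we have the complete order embedding
$$
\cl A \otimes_{max} (\bb C^{k+m}/J_{k,m}) \subset \cl A^{**} \otimes_{max} (\bb C^{k+m}/J_{k,m}).
$$
Set $\cl T = \bb C^{k+m}/J_{k,m}$. Via the identifications $M_n(\cl S\otimes_{max}\cl T) = M_n(\cl S)\otimes_{max}\cl T$ and $M_n(\cl A)^{**} = M_n(\cl A^{**})$, the complete order embedding at the $n$-th matrix level reduces to the non-matricial order embedding applied to the unital C*-algebra $M_n(\cl A)$. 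So it suffices to show, for every unital C*-algebra $\cl B$, that the canonical map
$$
\cl B \otimes_{max} \cl T \longrightarrow \cl B^{**} \otimes_{max} \cl T
$$
is an order embedding.

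The key tool is Lance's duality result recalled as item~(5) of Subsection~\ref{subsec max}: for any operator system $\cl S$, positivity of $u = \sum_i s_i \otimes t_i \in \cl S \otimes \cl T$ in $\cl S \otimes_{max} \cl T$ is tested by the condition $\sum_i \phi(s_i)(t_i) \geq 0$ for every cp map $\phi : \cl S \to \cl T^d$. The forward direction of the desired embedding is automatic from the functoriality of $\otimes_{max}$, since the inclusion $\cl B \hookrightarrow \cl B^{**}$ is a complete order embedding. For the reverse, suppose $u = \sum_i b_i \otimes t_i$ with $b_i \in \cl B$ is positive in $\cl B^{**} \otimes_{max} \cl T$, and let $\phi : \cl B \to \cl T^d$ be an arbitrary cp map. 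Since $\cl T^d$ is finite-dimensional (hence reflexive), the Banach-space bidual $\phi^{**} : \cl B^{**} \to (\cl T^d)^{**} = \cl T^d$ takes values in $\cl T^d$ and extends $\phi$. Granting for a moment that $\phi^{**}$ is again completely positive, applying the dual criterion on $\cl B^{**} \otimes_{max} \cl T$ to $\phi^{**}$ yields
$$
\sum_i \phi(b_i)(t_i) \;=\; \sum_i \phi^{**}(b_i)(t_i) \;\geq\; 0.
$$
Since $\phi$ was arbitrary, $u \geq 0$ in $\cl B \otimes_{max} \cl T$, completing the embedding.

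The only technical point is that the bidual $\phi^{**}$ of a cp map is again cp. This is a standard weak-$*$ closure argument: at each matrix level, $(\phi^{**})^{(n)}$ agrees with $(\phi^{(n)})^{**}$, which is $\sigma(M_n(\cl B)^{**},M_n(\cl B)^{*})$-continuous; the bounded positive part of $M_n(\cl B)^{**}$ is the weak-$*$ closure of $M_n(\cl B)^+$ (Kaplansky density), and the image lands in the norm-closed, hence weak-$*$ closed (finite-dimensional), positive cone of $M_n(\cl T^d)$. Once this ingredient is in place the proof is purely formal through Lance's duality and Theorem~\ref{thm TR-tens}; no further use of the specific structure of $\bb C^{k+m}/J_{k,m}$ is needed, which is precisely why the statement is valid without any WEP hypothesis on $\cl A$.
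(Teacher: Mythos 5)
Your proof is correct, and it reaches the same reduction as the paper but then diverges in how the key embedding is obtained. The paper's own proof is essentially two lines: it cites \cite[Lem.\ 6.5]{kptt2} for the unital complete order embedding $\cl A \otimes_{max} (\bb C^{k+m}/J_{k,m}) \subset \cl A^{**} \otimes_{max} (\bb C^{k+m}/J_{k,m})$ and then applies Theorem~\ref{thm TR-tens}. You instead prove that embedding from scratch: you reduce the matricial levels to the ground level via $M_n(\cl S \otimes_{max} \cl T) = M_n(\cl S)\otimes_{max}\cl T$ and $M_n(\cl A)^{**}=M_n(\cl A^{**})$, and then use Lance's duality (item~(5) of Subsection~\ref{subsec max}), i.e.\ $(\cl B\otimes_{max}\cl T)^{d,+}=CP(\cl B,\cl T^d)$, together with the standard fact that the second adjoint of a cp map into the finite-dimensional (hence reflexive, with closed cones) operator system $\cl T^d$ is again cp. This is a legitimate self-contained argument: the Kaplansky-density/weak-$*$ continuity step for positivity of $\phi^{**}$ is standard, and the passage from ``$f(u)\ge 0$ for all positive functionals $f$'' to ``$u\ge 0$'' is valid because the cones of an operator system are Archimedean. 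What your route buys is independence from the unpublished reference \cite{kptt2}, at the cost of invoking Lance's duality and the bidual machinery explicitly; it also makes transparent exactly where finite-dimensionality of $\bb C^{k+m}/J_{k,m}$ is used (reflexivity and closedness of the cones of $\cl T^d$), whereas the cited lemma presumably holds in greater generality. Your closing observation that no WEP hypothesis is needed matches the paper's intent.
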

\begin{proof}
By \cite[Lem. 6.5.]{kptt2} we have the unital complete order embedding
$$
 \cl A \otimes_{max}(\bb C^{k+m}/J_{k,m}) \subset  \cl A^{**} \otimes_{max}(\bb C^{k+m}/J_{k,m}).
$$
So Theorem \ref{thm TR-tens} yields the desired conclusion.
\end{proof}

\begin{corollary}
 Let $\cl A$ be a  unital C*-algebra with WEP. Then for every unital C*-algebraic inclusion $\cl A \subset \cl B$,
$\cl A$ has the complete TR(k,m)-property in $\cl B$, for any  $k,m \geq 1$.
\end{corollary}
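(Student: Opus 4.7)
The plan is to invoke Theorem \ref{thm TR-tens} to reformulate the TR$(k,m)$-interpolation problem as a tensorial embedding question: $\cl A$ has the complete TR$(k,m)$-property in $\cl B$ if and only if the canonical map
$$
\cl A \otimes_{max} (\bb C^{k+m}/J_{k,m}) \longrightarrow \cl B \otimes_{max} (\bb C^{k+m}/J_{k,m})
$$
is a unital complete order embedding. Once this reduction is made, the strategy is to show that under the WEP hypothesis on $\cl A$ the $\otimes_{max}$ tensor product with $\bb C^{k+m}/J_{k,m}$ collapses to $\otimes_{min}$, and then transport the embedding across via the injectivity of $\otimes_{min}$.

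The key identity to establish is
$$
\cl A \otimes_{min} (\bb C^{k+m}/J_{k,m}) = \cl A \otimes_{max} (\bb C^{k+m}/J_{k,m}).
$$
By Theorem \ref{thm QouhasLP} the operator system $\bb C^{k+m}/J_{k,m}$ has the lifting property, hence by Theorem \ref{thm LLPchar} is (min,er)-nuclear. Using the asymmetry relation $\cl S \otimes_{er} \cl T = \cl T \otimes_{el} \cl S$ I obtain $\cl A \otimes_{min} (\bb C^{k+m}/J_{k,m}) = \cl A \otimes_{el} (\bb C^{k+m}/J_{k,m})$. Since $\cl A$ has WEP, i.e.\ is (el,max)-nuclear, the right-hand side in turn equals $\cl A \otimes_{max} (\bb C^{k+m}/J_{k,m})$. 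This is precisely the style of collapse already used in Example \ref{ex WEPLLP} and inside the proof of Theorem \ref{thm WEPcri}.

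To finish, functoriality of $\otimes_{max}$ makes the displayed map ucp, so the only remaining point is that positivity pulls back. Suppose $x \in M_n(\cl A \otimes (\bb C^{k+m}/J_{k,m}))$ is positive in $M_n(\cl B \otimes_{max} (\bb C^{k+m}/J_{k,m}))$; since the $\otimes_{max}$ cones lie inside the $\otimes_{min}$ cones, $x$ is positive in $M_n(\cl B \otimes_{min} (\bb C^{k+m}/J_{k,m}))$, and then injectivity of $\otimes_{min}$ yields positivity in $M_n(\cl A \otimes_{min} (\bb C^{k+m}/J_{k,m}))$. The equality established in the previous paragraph finally gives positivity in $M_n(\cl A \otimes_{max} (\bb C^{k+m}/J_{k,m}))$, completing the complete order embedding.

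This corollary is essentially the general-$\cl B$ version of Corollary \ref{cor WEPCriB(H)}, and since all the ingredients (the tensorial translation of TR$(k,m)$, the WEP/LP collapse from $\otimes_{max}$ to $\otimes_{min}$, and injectivity of $\otimes_{min}$) have been set up in the earlier sections, I do not anticipate any genuine obstacle; the argument is a direct assembly of previously established facts rather than a new construction.
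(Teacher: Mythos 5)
Your proposal is correct and follows essentially the same route as the paper: reduce via Theorem \ref{thm TR-tens} to the embedding $\cl A \otimes_{max}(\bb C^{k+m}/J_{k,m}) \subset \cl B \otimes_{max}(\bb C^{k+m}/J_{k,m})$, establish $\min=\max$ on $\cl A \otimes (\bb C^{k+m}/J_{k,m})$ from WEP plus the lifting property of the quotient, and conclude by injectivity of $\otimes_{min}$. The paper phrases the last step by squeezing the induced tensor structure $\tau$ between $\min$ and $\max$, while you trace positivity through $\otimes_{min}$ explicitly; these are the same argument, and your explicit derivation of the $\min=\max$ identity for general $k,m$ is if anything slightly more careful than the paper's citation of its $(2,3)$ case.
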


\begin{proof}
Given positive integers $k$ and $m$, we need to show that
$$
 \cl A \otimes_{max}(\bb C^{k+m}/J_{k,m}) \subset  \cl B\otimes_{max}(\bb C^{k+m}/J_{k,m}).
$$
Let $\tau$ be the operator system structure on the algebraic tensor product $\cl A \otimes(\bb C^{k+m}/J_{k,m}) $
induced by $ \cl B\otimes_{max}(\bb C^{k+m}/J_{k,m})$. Clearly $min \leq \tau \leq max$. Since the minimal
and the maximal tensor product of $\cl A$ and $\bb C^{k+m}/J_{k,m}$ coincide (Theorem \ref{thm WEPcri}), we get
$min = \tau = max$. Simply replacing $\tau$ with $max$ we obtain that the above embedding holds.
\end{proof}

\subsection{Ordered spaces} In ordered function space theory (let us restrict ourself with Kadison's
function spaces in real case \cite{kad} or AOU spaces in complex case -in the sense of Paulsen and Tomforde \cite{pt}) a space $V$
is said to have TR$(k,m)$-property if for every $v_1,...,v_k$ and $w_1,...,w_m$ with $v_i < w_j$ for all
$i = 1,...,k$ and $j = 1,...,m$ there is an element $v$ such that
$$
v_1,...,v_k < v < w_1,...,w_m.
$$
We remark that a function space can be thought concretely as a unital real subspace of $C_{\bb R}(X)$.
Likewise, an AOU space can be considered concretely as a unital subspaces of $C(X)$ which is closed
under the involution. Note that the conditions
on $v_1,...,v_k$ and $w_1,...,w_m$, i.e., $v_i < w_j$ for all
$i = 1,...,k$ and $j = 1,...,m$, is equivalent to a separation in a larger object, namely $C_{\bb R}(X)$ or $C(X)$. 
Since the least upper bound of $v_1,...,v_k$, say $v$,  trivially exists in $C_{\bb R}(X)$ (or $C(X)$)
(which, indeed, makes $C_{\bb R}(X)$ and $C(X)$ a Riesz space or a vector lattice),
a small perturbation $\tilde v$ of $v$ satisfies $v_1,...,v_k <\tilde v < w_1,...,w_m.$
Therefore our definition of ``relative'' TR($k,m$)-property carries out the same character 
to C*-algebra theory. We refer the reader to \cite{NP} for the tensorial aspects of  Riesz interpolation properties in ordered function space theory.

\end{document}